\providecommand{\U}[1]{\protect \rule{.1in}{.1in}}
\newtheorem{theorem}{Theorem}[section]
\newtheorem{corollary}[theorem]{Corollary}
\newtheorem{definition}[theorem]{{Definition}}
\newtheorem{example}[theorem]{{ Example}}
\newtheorem{lemma}[theorem]{Lemma}
\newtheorem{remark}[theorem]{{Remark}}
\newenvironment{proof}[1][Proof]{\noindent \textbf{#1.} }{\  \rule{0.5em}{0.5em}}
\begin{document}

\title{Probabilistic approach to singular perturbations of viscosity solutions to  nonlinear parabolic PDEs}
\author{Mingshang Hu \thanks{Zhongtai Securities Institute for Financial Studies, Shandong University. humingshang@sdu.edu.cn. Research supported by the National Natural Science Foundation of China (No. 11671231) and the Qilu Young Scholars Program of Shandong University.}
\and Falei Wang\thanks{Zhongtai Securities Institute for Financial  Studies and School of Mathematics, Shandong University.
flwang2011@gmail.com (Corresponding author). Research supported by   the National Natural Science Foundation of China (No. 12031009, No. 11601282 and No. 11871310) and the Young Scholars Program of Shandong University.
Hu and Wang's research was
partially supported by the National Key R\&D Program of China (No. 2018YFA0703900).}}
\date{}
\maketitle
\begin{abstract}
In this paper, we  prove  a convergence theorem for singular perturbations problems for a class of fully nonlinear parabolic  partial differential equations (PDEs) with ergodic  structures. The limit function is represented as the viscosity solution to a fully nonlinear degenerate
 PDEs. Our approach is mainly based on  $G$-stochastic analysis argument. As a byproduct, we also establish  the averaging principle for stochastic differential equations driven by $G$-Brownian motion  ($G$-SDEs) with two time-scales.  The results extend  Khasminskii's averaging principle to  nonlinear  case.
\end{abstract}

\textbf{Key words}: singular perturbation, averaging  principle,  nonlinear  PDE, $G$-Brownian motion

\textbf{MSC-classification}: 60H10, 60H30

\section{Introduction}
The present paper is devoted to the research of singular perturbations  for a class of fully nonlinear degenerate parabolic PDEs with ergodicity coefficients. Our main tool is the nonlinear stochastic analysis theory formulated by Peng \cite{P07a}. Indeed, we shall  investigate the singular perturbation problems
through  asymptotic analysis of SDEs with slow and fast time-scales in the $G$-expectation framework.

 In this framework, Peng systemically established the nonlinear stochastic calculus theory, such as $G$-Brownian motion,
$G$-stochastic integral  and so on. Due to this nonlinear structure, the $G$-expectation theory provides a useful tool for the research of fully nonlinear PDEs and  volatility ambiguity in finance. Indeed, Song \cite{Song14}
obtained gradient estimates for a class of fully nonlinear PDEs by coupling methods for $G$-diffusion processes,
Biagini et al. \cite{BF} studied robust mean-variance hedging,
and Fouque, Pun and  Wong \cite{FP2} considered the asset allocation problem among a risk-free asset and two risky assets with an ambiguous correlation between the two risky assets.
A notion quite related  to $G$-expectation is the second order BSDE (2BSDE) framework proposed   by Soner, Touzi and Zhang \cite{STZ1}. Indeed, the setting of 2BSDE is more general than that of $G$-expectation, whereas $G$-expectation has more regularity, see \cite{HJPS, Lin,  PTZ, STZ, S2} and the references
therein for more research on this field.

In the present article, we shall consider averaging principle for the following $G$-SDEs with rapidly varying coefficients: for each $x=(\tilde{x},\bar{x})\in\mathbb{R}^n\times\mathbb{R}^{n}$ and $0< \varepsilon<1$,
\begin{align}\label{myw201}
\begin{cases}
&{\displaystyle \widetilde{X}^{\varepsilon,x}_t=\tilde{x}+\int^t_0\widetilde{b}(\widetilde{X}^{\varepsilon,x}_s,\overline{X}^{\varepsilon,x}_s)ds+\sum\limits_{i,j=1}^d\int^t_0\widetilde{h}_{ij}(\widetilde{X}^{\varepsilon,x}_s,\overline{X}^{\varepsilon,x}_s)d\langle B^i,B^j\rangle_s+\int^t_0\widetilde{\sigma}(\widetilde{X}^{\varepsilon,x}_s,\overline{X}^{\varepsilon,x}_s)dB_s,}\\
&{ \displaystyle \overline{X}^{\varepsilon,x}_t=\bar{x}+\int^t_0\frac{\overline{b}(\widetilde{X}^{\varepsilon,x}_s,\overline{X}^{\varepsilon,x}_s)}{\varepsilon}ds+\sum\limits_{i,j=1}^d\int^t_0\frac{\overline{h}_{ij}(\widetilde{X}^{\varepsilon,x}_s,\overline{X}^{\varepsilon,x}_s)}{\varepsilon}d\langle B^i,B^j\rangle_s+\int^t_0\frac{\overline{\sigma}(\widetilde{X}^{\varepsilon,x}_s,\overline{X}^{\varepsilon,x}_s)}{\sqrt{\varepsilon}}dB_s,}
\end{cases}
\end{align}
where $\widetilde{b}$, $\overline{b}$, $\widetilde{h}_{ij}=\widetilde{h}_{ji}$, $\overline{h}_{ij}=\overline{h}_{ji}:\mathbb{R}^{2n}\rightarrow \mathbb{R}^{n}$, $\widetilde{\sigma},\overline{\sigma}:\mathbb{R}^{2n}\rightarrow \mathbb{R}^{n\times d}$ are deterministic non-periodic
functions. Here the parameter $\varepsilon$ is used to describe the ratio of time scale between the diffusion processes $\widetilde{X}^{\varepsilon,x}$ and $\overline{X}^{\varepsilon,x}$. Then, with this time scale $\widetilde{X}^{\varepsilon,x}$ is referred as slow component and $\overline{X}^{\varepsilon,x}$ as the fast component, respectively.
 Note that the distribution of the slow component can be represented as  the viscosity solution to the following fully nonlinear PDE (see section 2.2):
\begin{equation}
\left \{
\begin{array}
[c]{l}%
\partial_{t}u^{\varepsilon}-G\left((\sigma^{\varepsilon})^{\top}D^{2}u^{\varepsilon}\sigma+2[\langle h^{\varepsilon}_{ij},Du^{\varepsilon}\rangle]_{i,j=1}^d\right)-\langle b^{\varepsilon},Du^{\varepsilon}\rangle=0,\ (t,\tilde{x},\bar{x})\in(0,T)\times\mathbb{R}^n\times\mathbb{R}^{n},\\
u^{\varepsilon}(0,\tilde{x},\bar{x})=\varphi(\tilde{x}),\ (\tilde{x},\bar{x})\in\mathbb{R}^n\times\mathbb{R}^n.
\end{array}
\right.  \label{myw990}%
\end{equation}
Then, our aim is to describe the limit behaviour of fully nonlinear PDE \eqref{myw990} as $\varepsilon\rightarrow 0$ through
  averaging of the  $G$-SDEs \eqref{myw201} under some appropriate assumptions.

The averaging principle for SDEs was first studied by Khasminskii \cite{Kh} in 1968. Under some ergodicity assumptions,  Khasminskii proved that the slow diffusion process converges weakly to the so-called  averaged SDE,
whose coefficients are characterized by  integrals
with respect to the related invariant probability measure.
 Since then, the averaging principle for diffusion processes have been studied with great interest and moreover, it provides a powerful tool for
the research of  singular perturbation problems for  linear parabolic PDEs.
In particular, Khasminskii and Krylov \cite{Kh2} established the averaging principle  for diffusion processes in non-ergodic case, and obtained
the limit behavior of linear parabolic PDEs without ergodic coefficients. We refer the reader to  \cite{CF,FW,PS, PV1, RX} and the references therein  for more research on this topic.

There is also a vast literature on the singular perturbations of nonlinear PDEs based on probabilistic argument. With the help of backward stochastic differential equations (BSDEs), Buckdahn and Hu  \cite{BHP} studied homogenization of viscosity solutions to semilinear parabolic PDEs with periodic structures, and Bahlali, Elouaflin and Pardoux \cite{BE1, BE} extended  the results of \cite{Kh2} to  semilinear parabolic PDEs.
 In \cite{BI1}, Buckdahn and Ichihara considered  homogenization of fully nonlinear parabolic PDEs in periodic case by stochastic control approach.
For more research on this field, we refer the reader to  \cite{BHP1,DF,GT,PV} and the references therein.

Motivated by the seminal work \cite{Kh}, we  shall study the  limit behaviour of the slow $G$-diffusion process in the ergodic case. However, due to the nonlinearity of  $G$-expectation, the averaged $G$-SDE has more complicated structure,
which cannot be described by averaging the coefficients of the slow component.  In this case, the averaged coefficients will interact with each other, and
cannot be identified separately  (see Lemma \ref{myw123}). On the other hand, the invariant expectations may not coincide with the ergodic expectations  in the $G$-expectation framework (cf. \cite{HW}).
To overcome these difficulties, we shall combine nonlinear stochastic calculus and viscosity solution techniques  to analyze the limit distribution of the slow component.
In particular, we shall construct the averaged PDE through the ergodic BSDEs approach in the $G$-expectation framework.
Indeed,  the ergodic BSDEs introduced by Fuhrman,  Hu and Tessitore \cite{FH} provides a useful method for the study of large time behaviour of parabolic PDEs (cf. \cite{HM,HM2}).

For our purpose, we first give a priori estimate of the $G$-SDE \eqref{myw201} under the disspativity condition.
This  is crucial for the equicontinuity of the viscosity solutions to  the  PDEs \eqref{myw990} with fast varying coefficients.
Then, according to the Arzel\`{a}-Ascoli theorem, we could  build a convergent subsequence of  the viscosity solutions.
Finally,  with the help of dynamic programming principle and  Khasminskii's discretization approach, we show that the limit function is the unique viscosity solution to the averaged PDE, which is a fully nonlinear PDE independent of the argument $\bar{x}$.
Moreover, in the spirit of the Markov property, we  could obtain that the limit of finite dimensional distribution is also determined by the averaged PDE, which together with  Kolmogorov's criterion for weak compactness  implies  Khasminskii's averaging principle. In addition, we also extend the $G$-SDEs \eqref{myw201} with two time-scales to a
more general case.

In conclusion, we develop an alternative method for the research of averaging of SDEs and singular perturbations of fully nonlinear parabolic PDEs without periodic structures. In particular, we extend the  one of \cite{Kh} to  a class of fully nonlinear PDEs through $G$-stochastic analysis methods.
For a closest related research, we refer the reader to \cite{A1} and  the references therein. In \cite{A1},
 Alvarez and Bardi  used the so-called perturbed test function method to study more general fully nonlinear PDEs with periodic coefficients.

The paper is organized as follows. In section 2, we introduce the $G$-SDEs with slow and fast time-scales. Then, we state the main  results in section 3.
Section 4 is devoted to the proofs of two main theorems. In section 5, we give an extended case.

\section{Formulation of the problem}

In this paper, for each Euclidian space, we denote by $\left\langle\cdot,\cdot\right\rangle$ and $|\cdot|$ its scalar product and the
associated norm, respectively. For a given set of parameters $\alpha$, $C(\alpha)$ will denote a positive constant only depending on these parameters and  may change from line to line.

\subsection{The Probabilistic Setup}
Let $\Omega=C_{0}^{d}(\mathbb{R}^{+})$ be the space of all $\mathbb{R}^{d}%
$-valued continuous paths $(\omega_{t})_{t\geq0}$ starting from origin,
equipped with the locally uniform norm. For each $t\in \lbrack0,\infty)$, we define $\Omega_t:=\{\omega_{\cdot\wedge t}:\omega\in\Omega\}$ and
\begin{align*}
 L_{ip}(\Omega_t):=\left\{ \varphi(B_{t_{1}},\ldots,B_{t_{k}}):k\in \mathbb{N}%
,t_{1},\ldots,t_{k}\in \lbrack0,t],\varphi \in C_{b.lip}(\mathbb{R}%
^{k\times d })\right\}, \ \ L_{ip}(\Omega):=\cup_{T\geq 0} L_{ip}(\Omega_T),
\end{align*}
where $C_{b.lip}(\mathbb{R}^{k\times d})$ denotes the space of all
bounded and
Lipschitz functions on $\mathbb{R}^{k\times d}$. Then, for each given monotonic and sublinear function $G:\mathbb{S}%
(d)\rightarrow \mathbb{R}$, Peng \cite{P07a} constructed a  sublinear expectation space $(\Omega,L_{ip}(\Omega),\mathbb{\hat{E}%
},(\mathbb{\hat{E}%
}_t)_{t\geq 0})$, called $G$-expectation space, where $\mathbb{S}(d)$ denotes the space of all $d\times d$ symmetric matrices. The
canonical process $B_t(\omega) = \omega_t$
 is called $G$-Brownian motion.

Indeed, for each $\xi\in L_{ip}(\Omega)$ with the  form of
\begin{equation*}
\xi=\varphi(B_{t_{1}},B_{t_{2}},\cdots,B_{t_{k}}),\  \
0=t_{0}<t_{1}<\cdots<t_{k}\leq T,
\end{equation*}
and for each $t\in \lbrack t_{i-1},t_{i})$, $i=1,\ldots,k$, we  define the conditional  $G$-expectation by
\begin{equation*}
\mathbb{\hat{E}}_{t}[\xi]:=u_{i}(t,B_t;B_{t_{1}},\cdots,B_{
t_{i-1}}).
\end{equation*}
Here,  the function $u_{i}(t,x;x_{1},\cdots,x_{i-1})$ with parameters $(x_{1},\cdots,x_{i-1})\in \mathbb{R}^{(i-1)\times d}$
is the viscosity solution of the following $G$-heat equation:
\begin{equation*}
\partial_{t}u_{i}(t,x;x_{1},\cdots,x_{i-1})+G(\partial^2_{xx}u_{i}(t,x;x_{1},\cdots,x_{i-1}))=0, \ \ (t,x)\in [t_{i-1},t_{i})\times \mathbb{R}^d,
\end{equation*}
with terminal conditions
\begin{equation*}
u_{i}(t_{i},x;x_{1},\cdots,x_{i-1})=u_{i+1}(t_{i},x;x_{1},\cdots, x_{i-1},x),
\, \, \hbox{for $i<k$},
\end{equation*}
and $u_{k}(t_{k},x;x_{1},\cdots,x_{k-1})=\varphi (x_{1},\cdots, x_{k-1},x)$.
The $G$-expectation of $\xi$ is defined by $\mathbb{\hat{E}}[\xi]=%
\mathbb{\hat{E}}_{0}[\xi]$.

For each
$p\geq1$, the completion of $L_{ip}(\Omega)$ under the norm
$||X||_{L_{G}^{p}}:=\left(\mathbb{\hat{E}}[|X|^{p}]\right)^{1/p}$ is denoted by
$L_{G}^{p}(\Omega)$. Similarly, we can define
$L_{G}^{p}(\Omega_{T})$ for each fixed $T\geq0$.   In this paper, we always assume
 that $G$ is non-degenerate to ensure the well-posedness of $G$-BSDE (see appendix A), i.e., there exist two constants
$0<\underline{\sigma}^{2}\leq\overline{\sigma}^{2}<\infty$ such that
\[
\frac{1}{2}\underline{\sigma}^{2}\mathrm{tr}[A-B]\leq G(A)-G(B)\leq \frac{1}{2}\overline{\sigma}^{2}\mathrm{tr}[A-B]\text{ for }A\geq B.
\]

\begin{theorem}[\cite{DHP11,HP09}]
  There exists a weakly compact set
$\mathcal{P}$ of probability
measures on $(\Omega,\mathcal{B}(\Omega))$, such that
\[
\mathbb{\hat{E}}[\xi]=\sup_{P\in\mathcal{P}}E_{P}[\xi]\  \text{for
 all}\ \xi\in  {L}_{G}^{1}{(\Omega)}.
\]
\end{theorem}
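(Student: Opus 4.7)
The plan is to prove the representation theorem by combining the dual representation of $G$ with the stochastic control interpretation of the $G$-heat equation, and then extending the identity from cylinder functionals to the completion $L_G^1(\Omega)$ by density.

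First I would exploit the sublinearity and monotonicity of $G$ to obtain a dual representation: there exists a convex, compact subset $\Gamma\subset\mathbb{S}(d)$ of positive semidefinite matrices, bounded between $\underline{\sigma}^2 I$ and $\overline{\sigma}^2 I$, such that $G(A)=\tfrac{1}{2}\sup_{\gamma\in\Gamma}\mathrm{tr}(\gamma A)$. This reinterprets the $G$-heat equation as a Bellman equation. On a classical Wiener space $(\Omega^{W},\mathcal{F}^{W},P^{W})$ carrying a $d$-dimensional Brownian motion $W$, let $\mathcal{A}$ be the set of progressively measurable $\mathbb{R}^{d\times d}$-valued processes $\sigma$ with $\sigma_{t}\sigma_{t}^{\top}\in\Gamma$ a.s., and set $B^{\sigma}_{t}=\int_{0}^{t}\sigma_{s}\,dW_{s}$, with law $P^{\sigma}$ on $(\Omega,\mathcal{B}(\Omega))$. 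By classical stochastic control (or the viscosity-solution verification theorem for HJB equations with bounded coefficients), the function
\begin{equation*}
V(t,x;x_{1},\ldots,x_{i-1}):=\sup_{\sigma\in\mathcal{A}}E\bigl[u_{i+1}(t_{i},\,x+B^{\sigma}_{t_{i}}-B^{\sigma}_{t};\,x_{1},\ldots,x_{i-1},x+B^{\sigma}_{t_{i}}-B^{\sigma}_{t})\bigr]
\end{equation*}
solves the same backward iterated $G$-heat equation system that defines the conditional $G$-expectation. By the comparison principle for the non-degenerate $G$-heat equation, the two coincide, whence for every cylinder $\xi=\varphi(B_{t_{1}},\ldots,B_{t_{k}})\in L_{ip}(\Omega)$,
\begin{equation*}
\hat{\mathbb{E}}[\xi]=\sup_{\sigma\in\mathcal{A}}E\bigl[\varphi(B^{\sigma}_{t_{1}},\ldots,B^{\sigma}_{t_{k}})\bigr]=\sup_{P\in\mathcal{P}_{0}}E_{P}[\xi],
\end{equation*}
where $\mathcal{P}_{0}:=\{P^{\sigma}:\sigma\in\mathcal{A}\}$.

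Next I would verify that $\mathcal{P}_{0}$ is tight, and define $\mathcal{P}$ as its weak closure. Since $\sigma_{t}\sigma_{t}^{\top}\in\Gamma$ is uniformly bounded by $\overline{\sigma}^{2}I$, the Burkholder--Davis--Gundy inequality yields, for any $p\geq 2$ and $s\leq t\leq T$, a constant $C_{p,T}$ independent of $\sigma$ with $E_{P^{\sigma}}[|\omega_{t}-\omega_{s}|^{p}]\leq C_{p,T}|t-s|^{p/2}$. Combined with $\omega_{0}=0$ this gives Kolmogorov tightness of $\mathcal{P}_{0}$ restricted to any $\Omega_{T}$; a diagonal argument extends tightness to $\Omega$. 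Hence $\mathcal{P}:=\overline{\mathcal{P}_{0}}^{\,w}$ is weakly compact. Taking closure does not affect the supremum over bounded continuous cylinder test functions, so $\hat{\mathbb{E}}[\xi]=\sup_{P\in\mathcal{P}}E_{P}[\xi]$ on $L_{ip}(\Omega)$.

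Finally, I would extend the identity to $\xi\in L_{G}^{1}(\Omega)$ by density. Define the auxiliary sublinear functional $\tilde{\mathbb{E}}[\xi]:=\sup_{P\in\mathcal{P}}E_{P}[\xi]$. From the representation on $L_{ip}$, $\tilde{\mathbb{E}}[|\xi-\eta|]=\hat{\mathbb{E}}[|\xi-\eta|]=\|\xi-\eta\|_{L_{G}^{1}}$ for $\xi,\eta\in L_{ip}(\Omega)$, so $\tilde{\mathbb{E}}$ is Lipschitz with respect to $\|\cdot\|_{L_{G}^{1}}$ on $L_{ip}(\Omega)$. Pick an approximating sequence $\xi_{n}\in L_{ip}(\Omega)$ with $\|\xi_{n}-\xi\|_{L_{G}^{1}}\to 0$; then $\tilde{\mathbb{E}}[\xi_{n}]\to\tilde{\mathbb{E}}[\xi]$ and $\hat{\mathbb{E}}[\xi_{n}]\to\hat{\mathbb{E}}[\xi]$, yielding equality in the limit.

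The main obstacle is the first step, the rigorous identification $\hat{\mathbb{E}}=\sup_{P^{\sigma}}E_{P^{\sigma}}$ on cylinder functionals. This requires the dual representation of $G$ on $\mathbb{S}(d)$, the dynamic-programming verification that the controlled-diffusion value function is a viscosity solution of the (possibly degenerate, but here uniformly elliptic by non-degeneracy of $G$) $G$-heat equation, and invocation of the comparison principle to match it with the iterated PDE solution used in the definition of $\hat{\mathbb{E}}_t$. Once this bridge is built, tightness and density arguments are comparatively standard.
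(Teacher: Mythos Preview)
The paper does not give its own proof of this theorem; it is simply quoted from the cited references \cite{DHP11,HP09}. Your outline is essentially the strategy carried out in those references: the dual representation $G(A)=\tfrac{1}{2}\sup_{\gamma\in\Gamma}\mathrm{tr}(\gamma A)$, the construction of $\mathcal{P}$ as laws of stochastic integrals $\int_0^\cdot\sigma_s\,dW_s$ with $\sigma\sigma^\top\in\Gamma$, the identification of $\hat{\mathbb{E}}$ with the upper envelope on cylinder functions via the HJB/comparison argument for the $G$-heat equation, Kolmogorov tightness from the uniform bound $\sigma\sigma^\top\leq\overline{\sigma}^2 I$, and extension to $L_G^1$ by density. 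So there is nothing to compare against in the present paper, and your proposal correctly reconstructs the standard proof from the literature.

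One small remark: the non-degeneracy assumption on $G$ is not actually needed for this representation theorem (it holds in the degenerate case as well, and \cite{DHP11} treats it in that generality); the comparison principle for the $G$-heat equation is available without uniform ellipticity because $G$ is still degenerate elliptic and the equation has no spatial dependence in the coefficients. You invoke non-degeneracy in your last paragraph, but you do not need it here---it is imposed in the paper only for the well-posedness of $G$-BSDEs later on.
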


Now, we define capacity%
\[
c(A):=\sup_{P\in\mathcal{P}}P(A),\ A\in\mathcal{B}(\Omega).
\]
A set $A\in\mathcal{B}(\Omega)$ is polar if $c(A)=0$.  A
property holds quasi-surely (q.s.) if it holds outside a
polar set. In what follows, we do not distinguish between two random
variables $X$ and $Y$ if $X=Y$ q.s.

\begin{definition}
\label{def2.6} Let $M_{G}^{0}(0,T)$ be the collection of processes
of  the following form: for a given partition
$\{t_{0},\cdot\cdot\cdot,t_{N}\}$ of $[0,T]$,
\[
\eta_{t}(\omega)=\sum_{i=0}^{N-1}\xi_{i}(\omega)I_{[t_{i},t_{i+1})}(t),
\]
where $\xi_{i}\in L_{ip}(\Omega_{t_{i}})$,
$i=0,1,2,\cdot\cdot\cdot,N-1$. For each $p\geq1$,  denote by
$M_{G}^{p}(0,T)$ the completion of $M_{G}^{0}(0,T)$ under the norm
$||\eta||_{M_{G}^{p}}:=\left(\mathbb{\hat{E}}[\int_{0}^{T}|\eta_{t}|^{p}dt]\right)^{1/p}$.
\end{definition}

For simplicity, we  denote by $\langle
B\rangle:=(\langle B^i, B^j\rangle)_{i,j=1}^d$   the cross-variation process.
Denote by $M^{p}_{G}%
	(0,T;\mathbb{R}^{d})$ the set of $d$-dimensional stochastic  process
	$\eta=(\eta^{1},\cdots,\eta^{d})$ such that $\eta^{i}\in M^{p}_{G}(0,T),i\leq d$. Similarly, we can define $L^p_G(\Omega;\mathbb{R}^d)$.
Then, for any $ \eta\in M_{G}^{2}(0,T;\mathbb{R}^d)$ and $ \gamma\in M_{G}^{1}(0,T;\mathbb{S}(d))$,
the $G$-It\^{o} integrals  \[
\int_0^T\eta_sdB_s:=\sum_{i=1}^d\int^T_0\eta^i(s)dB^i_s \ \text{and} \ \int^T_0\gamma_sd\langle
B\rangle_s:=\sum_{i,j=1}^d\int_{0}^{T}\gamma_{ij}({s})d\langle B^i, B^j\rangle_s
\]
 are well defined, see  Peng \cite{P08a,P10}.
Moreover, we also have the corresponding $G$-It\^{o}'s calculus theory. The following inequalities will be used frequently in this paper.

\begin{lemma}[\cite{P08a,P10}]
\label{myw901} Assume that $\gamma \in {M}^{p}_G(0,T;\mathbb{R}^d)$ and $\eta \in {M}^{p}_G(0,T;\mathbb{S}(d))$.  Then, for each $p\geq 2$, we have%
\begin{align*}
 &\mathbb{\hat{E}}\left[\sup_{t\in
\lbrack0,T]}\left|\int_{0}^{t}\eta_{s}dB_{s}\right|^{p}\right]\leq C(p)
\mathbb{\hat{E}}\left[\left(\int_{0}^{T}|\eta_{s}|^{2}ds\right)^{p/2}\right],\\
 &\mathbb{\hat{E}}\left[\left|\int_{0}^{T}\gamma_{s}d\langle B\rangle_s\right|^{p}\right]\leq  C(p,T)
\mathbb{\hat{E}}\left[\int_{0}^{T}|\gamma_{s}|^{p}ds\right].
\end{align*}
\end{lemma}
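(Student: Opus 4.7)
The plan is to reduce both inequalities to classical It\^o calculus by means of the representation $\mathbb{\hat{E}}[\cdot]=\sup_{P\in\mathcal{P}}E_{P}[\cdot]$ from the Denis--Hu--Peng theorem cited just above, and then to extend from simple processes to the whole space $M_{G}^{p}$ by density.

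First I would verify both inequalities on the dense subspace $M_{G}^{0}(0,T)$ of step processes, where the $G$-It\^o integral $\int_{0}^{\cdot}\eta_{s}\,dB_{s}$ and the Stieltjes-type integral $\int_{0}^{\cdot}\xi_{s}\,d\langle B\rangle_{s}$ agree pathwise with their classical counterparts under each $P\in\mathcal{P}$. The crucial input from the $G$-expectation theory is that each such $P$ makes $B$ a continuous $P$-martingale whose quadratic variation $\langle B\rangle^{P}$ is absolutely continuous with density $\gamma^{P}_{s}\in\mathbb{S}(d)$ satisfying $\underline{\sigma}^{2}I\leq\gamma^{P}_{s}\leq\overline{\sigma}^{2}I$ for Lebesgue-almost every $s$, $P$-a.s. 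Granting this, for the first inequality I apply the classical Burkholder--Davis--Gundy inequality under each $P$ to obtain
\[
E_{P}\!\left[\sup_{t\in[0,T]}\left|\int_{0}^{t}\eta_{s}\,dB_{s}\right|^{p}\right]\leq C(p)\,E_{P}\!\left[\left(\int_{0}^{T}\mathrm{tr}\bigl(\eta_{s}^{\top}\gamma^{P}_{s}\eta_{s}\bigr)\,ds\right)^{p/2}\right]\leq C(p,\overline{\sigma})\,E_{P}\!\left[\left(\int_{0}^{T}|\eta_{s}|^{2}\,ds\right)^{p/2}\right].
\]
Taking the supremum over $P\in\mathcal{P}$ on both sides and reapplying the representation formula to the right-hand side yields the inequality for simple $\eta$. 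Density of $M_{G}^{0}$ in $M_{G}^{p}$, the linearity of the integral, and continuity of both sides in the $M_{G}^{p}$-norm then deliver the general statement.

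For the second inequality the same density reduction applies. On a set of full capacity each path $t\mapsto\langle B\rangle_{t}$ is absolutely continuous with $|d\langle B\rangle_{s}/ds|\leq C(\overline{\sigma},d)$, hence H\"older's inequality gives, quasi-surely,
\[
\left|\int_{0}^{T}\xi_{s}\,d\langle B\rangle_{s}\right|^{p}\leq C(\overline{\sigma},d)^{p}\left(\int_{0}^{T}|\xi_{s}|\,ds\right)^{p}\leq C(p,T,\overline{\sigma},d)\int_{0}^{T}|\xi_{s}|^{p}\,ds,
\]
and taking $\mathbb{\hat{E}}$ of both sides completes the argument.

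The main obstacle in this plan is the uniform upper bound $\gamma^{P}_{s}\leq\overline{\sigma}^{2}I$ (and the companion absolute-continuity statement for $\langle B\rangle$ used in the second part): it is where the non-degeneracy assumption on $G$ plays a role, and it requires invoking the characterization of the representing set $\mathcal{P}$ as martingale laws of $B$ whose volatility takes values in the support of the symmetric matrix set generating $G$. Once that bound is secured, the remainder of the proof is a routine combination of classical martingale inequalities with the $\sup_{P\in\mathcal{P}}$-duality and an approximation step that is standard for integrals on $M_{G}^{p}$.
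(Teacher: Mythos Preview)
The paper does not supply its own proof of this lemma; it is quoted directly from Peng's works \cite{P08a,P10} without argument, so there is no in-paper proof to compare against.

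Your proposal is the standard route to these inequalities in the $G$-framework and is essentially correct: reduce to each $P\in\mathcal{P}$ via the Denis--Hu--Peng representation, apply classical BDG (respectively a pathwise H\"older bound using the Lipschitz control $|\langle B\rangle_{t}-\langle B\rangle_{s}|\leq\overline{\sigma}^{2}|t-s|$ quasi-surely), take the supremum over $P$, and extend by density from $M_{G}^{0}$ to $M_{G}^{p}$. One small inaccuracy worth flagging: the uniform upper bound on the density of $\langle B\rangle$ (equivalently, $\gamma_{s}^{P}\leq\overline{\sigma}^{2}I$) comes from the growth condition $G(A)\leq\frac{1}{2}\overline{\sigma}^{2}\mathrm{tr}[A]$ for $A\geq0$, not from the non-degeneracy hypothesis; non-degeneracy furnishes only the \emph{lower} bound $\underline{\sigma}^{2}I\leq\gamma_{s}^{P}$ and plays no role in either inequality here. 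With that correction, your argument goes through and matches the proofs in the cited references.
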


\subsection{$G$-SDE with two time-scales}
In this section, we shall state some basic results about $G$-SDE \eqref{myw201}, which will be used in our subsequent discussions. Throughout this paper, each element $x\in\mathbb{R}^{2n}$ is identified to  $(\tilde{x},\bar{x})\in\mathbb{R}^n\times\mathbb{R}^n,$ unless otherwise specified.
We need the following assumption:
\begin{description}
\item[(H1)] There exists a constant $L_1>0$ such that, for any $x,x^{\prime}\in\mathbb{R}^{2n}$,
\begin{align*}
&|{\ell}(x)-{\ell}(x^{\prime})|\leq
L_1|x-x^{\prime}|\ \text{and}\ |\ell(0)|\leq L_1,\ \text{for $\ell=\widetilde{b}, \overline{b}, \widetilde{h}_{ij}, \overline{h}_{ij}$, $\widetilde{\sigma}$ and $\overline{\sigma}$.}
\end{align*}
\end{description}

Under assumption {(H1)}, the $G$-SDE \eqref{myw201} has a unique solution $(\widetilde{X}^{\varepsilon,x},\overline{X}^{\varepsilon,x})\in M^2_G(0,T;\mathbb{R}^{2n})$ for each $T>0$ and we refer the reader to
 Chapter V in Peng \cite{P10} or Gao \cite{G} for the proof. Then, for any $\varphi\in C(\mathbb{R}^n)$ of polynomial growth, we define the function
\[
u^{\varepsilon}(t,\tilde{x},\bar{x}):=\mathbb{\hat{E}}\left[\varphi(\widetilde{X}^{\varepsilon,x}_t)\right], \ \forall  x=(\tilde{x},\bar{x})\in\mathbb{R}^n\times\mathbb{R}^{n}.
\]
For convenience, set
\[
b^{\varepsilon}=\left[
\begin{array}
[c]{cc}%
\widetilde{b}\\
\frac{\overline{b}}{\varepsilon}
\end{array}
\right],\ h_{ij}^{\varepsilon}=\left[
\begin{array}
[c]{cc}%
\widetilde{h}_{ij}\\
\frac{\overline{h}_{ij}}{\varepsilon}
\end{array}
\right], \
\sigma^{\varepsilon}=\left[
\begin{array}
[c]{cc}%
\widetilde{\sigma}\\
\frac{\overline{\sigma}}{\sqrt{\varepsilon}}
\end{array}
\right].
\]

Then we have the following result.
\begin{lemma}\label{myw203}
Suppose assumption \emph{(H1)} holds. Then for each $T>0$, $u^{\varepsilon}$ is the unique viscosity solution of the following fully nonlinear PDEs:%
\begin{equation}
\left \{
\begin{array}
[c]{l}%
\partial_{t}u^{\varepsilon}-G\left((\sigma^{\varepsilon})^{\top}D^{2}u^{\varepsilon}\sigma+2[\langle h^{\varepsilon}_{ij},Du^{\varepsilon}\rangle]_{i,j=1}^d\right)-\langle b^{\varepsilon},Du^{\varepsilon}\rangle=0,\ (t,\tilde{x},\bar{x})\in(0,T)\times\mathbb{R}^n\times\mathbb{R}^{n},\\
u^{\varepsilon}(0,\tilde{x},\bar{x})=\varphi(\tilde{x}),\ (\tilde{x},\bar{x})\in\mathbb{R}^n\times\mathbb{R}^n,
\end{array}
\right.  \label{PDE}%
\end{equation}
where $Du^{\varepsilon}=(\partial_{x_i}u^{\varepsilon})_{i=1}^{2n}$ and $D^2u^{\varepsilon}=[\partial_{x_ix_j}^2u^{\varepsilon}]_{i,j=1}^{2n}$ for
each $x=(\tilde{x},\bar{x})\in\mathbb{R}^{2n}$. Moreover, it holds that
\begin{align}\label{myw607}
u^{\varepsilon}(t,\tilde{x},\bar{x})=\mathbb{\hat{E}}\left[u^{\varepsilon}(t-\delta,\widetilde{X}^{\varepsilon,x}_{\delta},\overline{X}^{\varepsilon,x}_{\delta})\right], \ \forall 0\leq \delta\leq t\leq T.
\end{align}
\end{lemma}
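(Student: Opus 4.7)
The plan is to verify that $u^{\varepsilon}$ satisfies the $G$-nonlinear Feynman--Kac formula associated with the combined $G$-diffusion $X^{\varepsilon,x}:=(\widetilde{X}^{\varepsilon,x},\overline{X}^{\varepsilon,x})$ on $\mathbb{R}^{2n}$. Writing \eqref{myw201} as a single $G$-SDE on $\mathbb{R}^{2n}$ with coefficients $b^{\varepsilon}$, $h^{\varepsilon}_{ij}$, $\sigma^{\varepsilon}$, the equation \eqref{PDE} is precisely its associated $G$-PDE, so the claim is an instance of Peng's representation theorem for $G$-diffusions (Chapter V of \cite{P10}), which I would prove in the standard order: regularity of $u^{\varepsilon}$, the dynamic programming principle \eqref{myw607}, the viscosity property, and uniqueness.

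For the regularity, under (H1) the $L^p$ stability estimates for $G$-SDEs (see Chapter V of Peng \cite{P10} or Gao \cite{G}) give, for any $p\geq 2$ and $T>0$,
\begin{align*}
\mathbb{\hat{E}}[|X^{\varepsilon,x}_t - X^{\varepsilon,x'}_{t'}|^p]\leq C\bigl(|x-x'|^p + |t-t'|^{p/2}\bigr),\qquad \mathbb{\hat{E}}[|X^{\varepsilon,x}_t|^p]\leq C(1+|x|^p),
\end{align*}
which, combined with the polynomial growth of $\varphi$, delivers joint continuity and polynomial growth of $u^{\varepsilon}$ on $[0,T]\times\mathbb{R}^{2n}$. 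For the dynamic programming identity \eqref{myw607}, pathwise uniqueness together with time-homogeneity of \eqref{myw201} implies that $X^{\varepsilon,x}_{\delta+\cdot}$ coincides q.s.\ with the solution of the same $G$-SDE started at $X^{\varepsilon,x}_\delta$ and driven by the shifted increments $B_{\delta+\cdot}-B_\delta$. Invoking the tower property $\mathbb{\hat{E}}[\cdot]=\mathbb{\hat{E}}[\mathbb{\hat{E}}_\delta[\cdot]]$ and the identity $\mathbb{\hat{E}}_\delta[\varphi(\widetilde{X}^{\varepsilon,x}_t)]=u^{\varepsilon}(t-\delta,X^{\varepsilon,x}_\delta)$, which rests on the independence of the shifted $G$-Brownian motion from $L_{ip}(\Omega_\delta)$ in Peng's sense, then yields \eqref{myw607}.

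To show $u^{\varepsilon}$ is a viscosity solution of \eqref{PDE}, I would fix $\psi\in C^{1,2}$ and a point $(t,x)$ at which $u^{\varepsilon}-\psi$ attains a local minimum with $\psi(t,x)=u^{\varepsilon}(t,x)$, apply $G$-It\^o's formula to $s\mapsto \psi(t-s,X^{\varepsilon,x}_s)$ on $[0,\delta]$, and take $\mathbb{\hat{E}}$. The $dB_s$-integral vanishes; the remaining $ds$- and $d\langle B\rangle_s$-integrals produce, in the limit $\delta\downarrow 0$, the nonlinear generator of $X^{\varepsilon,x}$, i.e.\ exactly the differential operator appearing in \eqref{PDE}, via the identity $\lim_{\delta\downarrow 0}\delta^{-1}\mathbb{\hat{E}}[\int_0^\delta \eta_s d\langle B\rangle_s]=2G(\eta_0)$ for continuous $\eta$. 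Combined with \eqref{myw607}, which forces $\mathbb{\hat{E}}[\psi(t-\delta,X^{\varepsilon,x}_\delta)-\psi(t,x)]\leq 0$, this yields the supersolution inequality at $(t,x)$; the subsolution case is symmetric. Uniqueness within the class of polynomially growing functions follows from the standard comparison principle for fully nonlinear parabolic $G$-type PDEs with Lipschitz coefficients, proved by Crandall--Ishii--Lions doubling of variables with an auxiliary term such as $\mu(1+|x|^2)$ to absorb growth at infinity.

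The main technical hurdle is the passage $\delta\downarrow 0$ in the viscosity step: this is precisely where the nonlinear operator $G(\cdot)$ in \eqref{PDE} emerges from the sublinearity of $\mathbb{\hat{E}}$. One must couple the bound $\mathbb{\hat{E}}[\int_0^\delta \eta_s d\langle B\rangle_s]\leq 2\int_0^\delta \mathbb{\hat{E}}[G(\eta_s)]ds$ with the continuity of $\eta$ at $s=0$, and track signs carefully so that the sub- and supersolution arguments combine to give equality rather than only one inequality. Beyond this point the proof is a routine adaptation of Peng's Feynman--Kac theorem to the two-scale system \eqref{myw201}.
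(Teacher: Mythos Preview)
Your proposal is correct and follows the same approach as the paper: the paper's proof is simply a one-line citation to Theorem~3.7 in Chapter~V of \cite{P10} (or Theorem~4.5 in \cite{HJPS1}) together with time-homogeneity, and what you have written is precisely a sketch of the standard argument behind those cited results.
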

\begin{proof}
The proof follows from Theorem 5.3.7  of \cite{P10} or Theorem 4.5 in \cite{HJPS1} and the fact that the $G$-SDE \eqref{myw201} is time-homogeneous.
\end{proof}

From Lemma \ref{myw203}, we could study the asymptotic behavior  of $u^{\varepsilon}$ as $\varepsilon\rightarrow 0$ through the  slow component $\widetilde{X}^{\varepsilon,x}_t$.
In the rest of the article, we are going to discuss the  limit distribution of $\widetilde{X}^{\varepsilon,x}$ as $\varepsilon\rightarrow 0$.

\begin{remark}{\upshape
The equation \eqref{PDE} is a fully nonlinear PDE  without periodic structure, which is
different from the existing research; see \cite{A1, BE1, BE, BI1, Kh, Kh2, PS} and the references therein.
}\end{remark}

\section{The  averaging principle}
This section is devoted to the research of limit behaviour of the slow $G$-diffusion process
as $\varepsilon\rightarrow 0$. In order to describe the  averaged PDE, we introduce the following  auxiliary $G$-SDE: for any $x=(\tilde{x},\bar{x})\in\mathbb{R}^{2n}$,
\begin{align}\label{myw501}
 \overline{X}^{x}_t=\bar{x}+\int^t_0\overline{b}(\tilde{x},\overline{X}^{{x}}_s)ds+\sum\limits_{i,j=1}^d\int^t_0\overline{h}_{ij}(\tilde{x},\overline{X}^{{x}}_s)d\langle B^i,B^j\rangle_s+\int^t_0\overline{\sigma}(\tilde{x},\overline{X}^{{x}}_s)dB_s.
\end{align}

In what follows, we make use of the following assumptions.
\begin{description}
\item[(H2)] There exists  a constant $\eta>0$ such that, for each $\tilde{x},\bar{x},\bar{x}^{\prime}\in\mathbb{R}^n$.\begin{align*}
&G\left((\overline{\sigma}(\tilde{x},\bar{x})-\overline{\sigma}(\tilde{x},\bar{x}^{\prime}%
))^{\top}(\overline{\sigma}(\tilde{x},\bar{x})-\overline{\sigma}(\tilde{x},\bar{x}^{\prime}))+2\left[\langle \bar{x}-\bar{x}^{\prime}%
,\overline{h}_{ij}(\tilde{x},\bar{x})-\overline{h}_{ij}(\tilde{x},\bar{x}^{\prime})\rangle\right]_{i,j=1}^{d}\right)\\&
+\langle \bar{x}-\bar{x}^{\prime
},\overline{b}(\tilde{x},\bar{x})-\overline{b}(\tilde{x},\bar{x}^{\prime})\rangle \leq-\eta|\bar{x}-\bar{x}^{\prime}|^{2}
\end{align*}
\item [(H3)] There exists  a constant $L_2>0$ such that $|\ell(x)|\leq {L}_2(1+|\tilde{x}|)$ for  $\ell=\widetilde{b},\widetilde{h}_{ij}$, $\widetilde{\sigma}$ and  $x=(\tilde{x},\bar{x})\in\mathbb{R}^{2n}$.
\end{description}
\begin{remark}{\upshape
The assumption (H2) is called  dissipativity
condition, which ensures the ergodicity of the diffusion process $\overline{X}^x$ (cf. \cite{DZ1,DH,FH}). The assumption (H3)  is equivalent to  $|\ell(0,\bar{x})|\leq {L}_2$,  which is used to establish a uniform moment estimate of order $p>2$ for the  slow component $\widetilde{X}^{\varepsilon,x}$ (see Remark \ref{myw9019} in section 4).
}
\end{remark}
\begin{lemma}\label{myw123}
Suppose  assumptions \emph{(H1)}-\emph{(H3)} are satisfied. Then, for each $(\tilde{x},\bar{x},p,A)\in\mathbb{R}^{2n}\times\mathbb{R}^n\times\mathbb{S}(n)$,
the following  limit
\begin{align*}
\widetilde{G}(\tilde{x},p,A):= \lim\limits_{t\rightarrow\infty}\frac{1}{t}\mathbb{\hat{E}}\left[\int^t_0\langle p, \widetilde{b}(\tilde{x},\overline{X}^{x}_s)\rangle ds+\sum_{i,j=1}^d\int^t_0 \left(\langle p,\widetilde{h}_{ij}(\tilde{x},\overline{X}^{x}_s)\rangle+\frac{1}{2}\widetilde{\sigma}^A_{ij}(\tilde{x},\overline{X}^{x}_s)\right)d\langle B^i, B^j\rangle_s\right]
\end{align*}
exists and is independent of the argument $\bar{x}$,
where the  matrix $\widetilde{\sigma}^A=[\widetilde{\sigma}^A_{ij}]_{i,j}=\widetilde{\sigma}^{\top}A\widetilde{\sigma}$.
\end{lemma}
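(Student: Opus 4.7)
The plan is to combine the dissipativity condition (H2) with the $G$-Markov (tower) property of $\overline{X}^x$ to establish, first, an exponential contraction of the fast $G$-diffusion in $\bar{x}$, and then an approximate sub-additivity in $t$ of
\[
v(\bar{x};t):=\mathbb{\hat{E}}\Bigl[\int_0^t\phi(\overline{X}^{x}_s)\,ds+\sum_{i,j=1}^d\int_0^t\psi_{ij}(\overline{X}^{x}_s)\,d\langle B^i,B^j\rangle_s\Bigr],
\]
where $\phi(y):=\langle p,\widetilde{b}(\tilde{x},y)\rangle$ and $\psi_{ij}(y):=\langle p,\widetilde{h}_{ij}(\tilde{x},y)\rangle+\frac{1}{2}\widetilde{\sigma}^A_{ij}(\tilde{x},y)$. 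A Fekete-type lemma for quasi-subadditive functions will then produce $\widetilde{G}(\tilde{x},p,A)=\lim_{t\to\infty}v(\bar{x};t)/t$, and a uniform Lipschitz-in-$\bar{x}$ bound derived from the contraction will force the limit to be independent of $\bar{x}$.

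First I would prove the contraction
\[
\mathbb{\hat{E}}\bigl[|\overline{X}^{(\tilde{x},\bar{x})}_t-\overline{X}^{(\tilde{x},\bar{x}')}_t|^2\bigr]\leq e^{-2\eta t}|\bar{x}-\bar{x}'|^2
\]
by applying $G$-It\^{o}'s formula to $e^{2\eta t}|Z_t|^2$, where $Z_t$ is the difference of the two solutions. Taking $\mathbb{\hat{E}}$ kills the $dB$-martingale part, and the $d\langle B^i,B^j\rangle$-integral is absorbed into a drift through the inequality
\[
\mathbb{\hat{E}}\Bigl[\int_0^t\xi_{ij,s}\,d\langle B^i,B^j\rangle_s+\int_0^t f_s\,ds\Bigr]\leq \mathbb{\hat{E}}\Bigl[\int_0^t\bigl(2G(\xi_s)+f_s\bigr)\,ds\Bigr],
\]
itself justified by the representation $\mathbb{\hat{E}}[\,\cdot\,]=\sup_{P\in\mathcal{P}}E_P[\,\cdot\,]$ and the pointwise bound $\mathrm{tr}(\xi\gamma)\leq 2G(\xi)$ for each admissible volatility $\gamma$. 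The resulting integrand in the exponentially weighted estimate is exactly $e^{2\eta s}\bigl(2\eta|Z_s|^2+2\langle Z_s,\Delta\overline{b}\rangle+2G(\Delta M_s)\bigr)$, with $\Delta M_s$ the symmetric matrix appearing in (H2), and (H2) then makes this non-positive. A parallel argument for $|\overline{X}^{x}_t|^2$ (comparison with $\overline{X}^{(\tilde{x},0)}$, together with (H3) to bound the coefficients at $\bar{x}=0$) yields the uniform moment bound $\sup_{t\geq 0}\mathbb{\hat{E}}[|\overline{X}^{x}_t|^2]\leq C(1+|\tilde{x}|^2+|\bar{x}|^2)$.

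Next, using the exponential contraction, the Lipschitz assumption (H1), sublinearity of $\mathbb{\hat{E}}$, and Lemma \ref{myw901} for the $d\langle B\rangle$-integral, I obtain a Lipschitz-in-$\bar{x}$ bound $|v(\bar{x};t)-v(\bar{x}';t)|\leq C(p,A,L_1,\eta)\,|\bar{x}-\bar{x}'|$ uniform in $t$. The dynamic programming principle (time-homogeneity plus the tower property, cf.\ (\ref{myw607})) gives
\[
v(\bar{x};t+s)=\mathbb{\hat{E}}\Bigl[\Psi_s^{\bar{x}}+v\bigl(\overline{X}^{(\tilde{x},\bar{x})}_s;t\bigr)\Bigr],
\]
with $\Psi_s^{\bar{x}}$ the running integral up to time $s$. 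Applying sublinearity in both directions ($\mathbb{\hat{E}}[X+Y]\leq \mathbb{\hat{E}}[X]+\mathbb{\hat{E}}[Y]$ and $\mathbb{\hat{E}}[X+Y]\geq \mathbb{\hat{E}}[X]-\mathbb{\hat{E}}[-Y]$), combined with the uniform Lipschitz bound and uniform moment estimate, produces
\[
|v(\bar{x};t+s)-v(\bar{x};t)-v(\bar{x};s)|\leq C_*,
\]
independently of $t,s$. A Fekete-style argument applied to $\tilde{v}(t):=v(\bar{x};t)+C_*$ gives convergence of $v(\bar{x};t)/t$; boundedness of $\phi,\psi$ in $\bar{x}$ (which is (H3) for fixed $\tilde{x}$) ensures the limit is finite and extends the convergence from the integer lattice to continuous $t$ via the trivial Lipschitz-in-$t$ bound. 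Dividing the Lipschitz-in-$\bar{x}$ bound by $t$ forces $\bar{x}$-independence of the limit.

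The main obstacle is the first step: justifying the $G$-It\^{o} computation for $e^{2\eta t}|Z_t|^2$ inside $\mathbb{\hat{E}}$ and, in particular, the exchange inequality between $\int\cdot\,d\langle B^i,B^j\rangle$ and $\int 2G(\cdot)\,ds$ under $\mathbb{\hat{E}}$. This can be handled either via the stochastic-volatility representation of $\mathcal{P}$ or by density of piecewise-constant integrands (for which the inequality is immediate from the definition of $G$) in $M_G^1$. Once that key inequality is available, (H2) performs the structural work in one line, and the remainder of the proof is a classical ergodic construction adapted to the sublinear setting.
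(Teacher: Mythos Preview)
Your argument is correct and takes a genuinely different route from the paper. The paper does not use a Fekete-type subadditivity argument at all; instead it invokes the ergodic $G$-BSDE machinery developed in Appendix~A. Concretely, it views the running cost $g(y)=\langle p,\widetilde{b}(\tilde{x},y)\rangle$, $g'_{ij}(y)=\langle p,\widetilde{h}_{ij}(\tilde{x},y)\rangle+\tfrac{1}{2}\widetilde{\sigma}^A_{ij}(\tilde{x},y)$ as the driver of an ergodic $G$-BSDE over the fast diffusion $\overline{X}^{x}$, checks the Lipschitz bound (your contraction step is hidden inside Lemma~\ref{myw2}), and then reads off both the existence of $\widetilde{G}(\tilde{x},p,A)$ and its $\bar{x}$-independence from Lemmas~\ref{myw6}--\ref{myw7}. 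What this buys the paper is an immediate quantitative estimate, namely inequality~\eqref{myw125}:
\[
\Bigl|\,\mathbb{\hat{E}}\Bigl[\int_0^t\cdots\Bigr]-\widetilde{G}(\tilde{x},p,A)\,t\,\Bigr|\leq C(L_1,L_2,\eta)(|p|+|A|)(1+|x|^2),
\]
which is used later in Lemma~\ref{myw202}. Your approach is more elementary and self-contained, avoiding the infinite-horizon BSDE construction entirely; and since you actually establish the two-sided bound $|v(\bar{x};t+s)-v(\bar{x};t)-v(\bar{x};s)|\leq C_*$ (not mere subadditivity), a doubling argument also yields $|v(\bar{x};t)-\widetilde{G}\,t|\leq C_*$, so the quantitative rate is recoverable from your method as well. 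One small slip: the bound on $|\overline{b}(\tilde{x},0)|,|\overline{\sigma}(\tilde{x},0)|$ needed for the uniform second moment of $\overline{X}^x$ comes from (H1), not (H3); (H3) is what makes $\phi,\psi$ bounded and Lipschitz in $y$.
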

\begin{proof}
For each $\bar{x}\in\mathbb{R}^n$, consider the following  ergodic $G$-BSDE: $\forall 0\leq t\leq r<\infty$,
\begin{align*}
{Y}_{t}^{p,A,\bar{x}}   =&{Y}^{p,A,\bar{x}}_r+\int^r_t\left(\langle p, \widetilde{b}(\tilde{x},\overline{X}^{x}_s)\rangle-\widetilde{G}(\tilde{x},p,A)\right)ds+\sum_{i,j=1}^d\int^r_t \left(\langle p,\widetilde{h}_{ij}(\tilde{x},\overline{X}^{x}_s)\rangle+\frac{1}{2}\widetilde{\sigma}^A_{ij}(\tilde{x},\overline{X}^{x}_s)\right)d\langle B^i, B^j\rangle_s\\
& -\int_{t}^{r}{Z}_{s}^{p,A,\bar{x}}dB_{s}-({K}_{r}^{p,A,\bar{x}}-{K}_{t}^{p,A,\bar{x}}).
\end{align*}
Under assumptions (H1) and (H3), it holds that $|\ell(\tilde{x},0)|\leq {L}_1(1+|\tilde{x}|)$  for $\ell=\overline{b},\overline{h}_{ij}$, $\overline{\sigma}$, and
\begin{align}\label{myw122}
\begin{split}
&|\langle p, \widetilde{b}(\tilde{x},\bar{x})\rangle-\langle p, \widetilde{b}(\tilde{x},\bar{x}^{\prime})\rangle|+\sum_{i,j=1}^d \left|\langle p,\widetilde{h}_{ij}(\tilde{x},\bar{x})\rangle-\langle p,\widetilde{h}_{ij}(\tilde{x},\bar{x}^{\prime})\rangle\right|+ \frac{1}{2}\sum_{i,j=1}^d\left|\widetilde{\sigma}^{A}_{ij}(\tilde{x},\bar{x})-\widetilde{\sigma}^{A}_{ij}(\tilde{x},\bar{x}^{\prime})\right|\\
&\leq C(L_1,L_2)(1+|\tilde{x}|)(|p|+|A|)|\bar{x}-\bar{x}^{\prime}|.
\end{split}
\end{align}
Thus, by Lemma \ref{myw6} in appendix A, the above ergodic $G$-BSDE has a  solution \[
\left({Y}^{p,A,\bar{x}},{Z}^{p,A,\bar{x}},{K}^{p,A,\bar{x}},\widetilde{G}(\tilde{x},p,A)\right)\in \mathfrak{S}_{G}^{2}(0,\infty)\times\mathbb{R}.\] Moreover, from Lemma \ref{myw7} in appendix A (taking $\kappa_1=L_1,\kappa_2=C(L_1,L_2)(1+|\tilde{x}|)(|p|+|A|)$ and $\bar{\kappa}={L}_1(1+|\tilde{x}|)$), we have for each $t\in[0,\infty)$
\begin{align}\label{myw125}
\begin{split}
&\left|\mathbb{\hat{E}}\left[{\int^{t}_0\langle p,  \widetilde{b}(\tilde{x},\overline{X}^{{x}}_s)\rangle ds}+\sum_{i,j=1}^d\int^{t}_0\left(\langle p,\widetilde{h}_{ij}(\tilde{x},\overline{X}^{{x}}_s)\rangle+\frac{1}{2}\widetilde{\sigma}^A_{ij}(\tilde{x},\overline{X}^{{x}}_s)\right)d\langle B^i, B^j\rangle_s\right]-\widetilde{G}(\tilde{x},p,A) t\right|\\
&\leq { C(L_1,\eta)C(L_1,L_2)(|p|+|A|)(1+|\tilde{x}|)\left(1+|\bar{x}|+{L}_1(1+|\tilde{x}|)\right)}\\
&\leq { C(L_1,L_2,\eta)(|p|+|A|)(1+|x|^2)},\end{split}
\end{align}
which ends the proof.
\end{proof}

 Moreover,  $\widetilde{G}(\tilde{x},p,A)$ has the following properties.

\begin{lemma}\label{myw502}
Assume the conditions \emph{(H1)}-\emph{(H3)}  hold. Then for each $\tilde{x},\tilde{x}^{\prime},p,p^{\prime}\in\mathbb{R}^n$ and $A,A^{\prime}\in\mathbb{S}(n)$,
\begin{description}
\item[(i)] $\widetilde{G}(\tilde{x},p+{p}^{\prime},A+{A}^{\prime})  \leq \widetilde{G}(\tilde{x},p,A)+\widetilde{G}(\tilde{x},{p}^{\prime},{A}^{\prime})$,
\item[(ii)] $\widetilde{G}(\tilde{x},\lambda p,\lambda A)  =\lambda \widetilde{G}(\tilde{x},p,A)$ for each $ \lambda \geq0$,
\item[(iii)] $\widetilde{G}(\tilde{x},p,A)  \geq \widetilde{G}(\tilde{x},p,{A}^{\prime})$, {if} $ A\geq {A}^{\prime},$
\item[(iv)] $|\widetilde{G}(\tilde{x},p,A)-\widetilde{G}(\tilde{x}^{\prime},p^{\prime},A^{\prime})|\leq C(L_1,L_2,\eta)(1+|\tilde{x}|^2+|\tilde{x}^{\prime}|^2)\left[(|p|+|A|)|\tilde{x}-\tilde{x}^{\prime}|+|{p}-{p}^{\prime}|+|{A}-{A}^{\prime}|\right].$ In particular, $\widetilde{G}$ is a continuous function.
\end{description}
\end{lemma}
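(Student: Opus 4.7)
Parts (i)--(iii) follow directly from the corresponding properties of $\mathbb{\hat{E}}$. Let $F(t;x,p,A)$ denote the expectation inside the limit defining $\widetilde{G}(\tilde{x},p,A)$. Since $\widetilde{\sigma}^{A+A'} = \widetilde{\sigma}^{\top}A\widetilde{\sigma} + \widetilde{\sigma}^{\top}A'\widetilde{\sigma}$, the integrand is linear in $(p,A)$, so sublinearity and positive homogeneity of $\mathbb{\hat{E}}$ give $F(t;x,p+p',A+A')\leq F(t;x,p,A)+F(t;x,p',A')$ and $F(t;x,\lambda p,\lambda A)=\lambda F(t;x,p,A)$ for $\lambda\geq 0$, yielding (i) and (ii) after dividing by $t$ and sending $t\to\infty$. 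For (iii), when $A\geq A'$ the matrix $\widetilde{\sigma}^{\top}(A-A')\widetilde{\sigma}$ is positive semi-definite; since $d\langle B^i,B^j\rangle_s$ is q.s.\ a positive semi-definite matrix-valued measure (squeezed between $\underline{\sigma}^2\delta_{ij}ds$ and $\overline{\sigma}^2\delta_{ij}ds$), the extra contribution $\sum_{i,j}\int_0^t(\widetilde{\sigma}^A-\widetilde{\sigma}^{A'})_{ij}(\tilde{x},\overline{X}^x_s)\,d\langle B^i,B^j\rangle_s$ is q.s.\ non-negative, and monotonicity of $\mathbb{\hat{E}}$ passes to the limit.

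For (iv), the crucial input is the uniform-in-$t$ remainder bound \eqref{myw125}, which gives $|\widetilde{G}(\tilde{x},p,A) - F(t;x,p,A)/t|=O(1/t)$ for any fixed $x$. Because $\widetilde{G}$ does not depend on $\bar{x}$, I may evaluate both $\widetilde{G}(\tilde{x},p,A)$ and $\widetilde{G}(\tilde{x}',p',A')$ along fast processes started at a common point $\bar{x}=0$, obtaining $|\widetilde{G}(\tilde{x},p,A)-\widetilde{G}(\tilde{x}',p',A')|\leq \limsup_{t\to\infty}t^{-1}|F(t;(\tilde{x},0),p,A)-F(t;(\tilde{x}',0),p',A')|$. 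I would then split into a $(p,A)$-variation at fixed $\tilde{x}$ and a $\tilde{x}$-variation at fixed $(p',A')$. The $(p,A)$-variation is handled by combining (i) with the crude a priori bound $|\widetilde{G}(\tilde{x},q,B)|\leq C(1+|\tilde{x}|^2)(|q|+|B|)$, which comes from the growth estimates $|\widetilde{b}|,|\widetilde{h}_{ij}|\leq L_2(1+|\tilde{x}|)$ and $|\widetilde{\sigma}^A|\leq L_2^2(1+|\tilde{x}|)^2|A|$ provided by (H3), together with the q.s.\ bound on $d\langle B\rangle_s$. For the $\tilde{x}$-variation I would decompose each coefficient difference in the telescoping form $\widetilde{b}(\tilde{x},\overline{X}^x_s)-\widetilde{b}(\tilde{x}',\overline{X}^{x'}_s) = [\widetilde{b}(\tilde{x},\overline{X}^x_s)-\widetilde{b}(\tilde{x}',\overline{X}^x_s)] + [\widetilde{b}(\tilde{x}',\overline{X}^x_s)-\widetilde{b}(\tilde{x}',\overline{X}^{x'}_s)]$, and likewise for $\widetilde{h}_{ij}$ and $\widetilde{\sigma}^{A'}$; (H1) and Lemma \ref{myw901} then convert everything into $ds$-integrals and reduce the bound to $|\tilde{x}-\tilde{x}'|$ plus the time-averaged sensitivity $t^{-1}\int_0^t\mathbb{\hat{E}}[|\overline{X}^{(\tilde{x},0)}_s-\overline{X}^{(\tilde{x}',0)}_s|]\,ds$.

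The main obstacle, and the heart of (iv), is therefore the uniform-in-time stability estimate $\sup_{s\geq 0}\mathbb{\hat{E}}[|\overline{X}^{(\tilde{x},0)}_s-\overline{X}^{(\tilde{x}',0)}_s|^2]\leq C(\eta,L_1,L_2)(1+|\tilde{x}|^2+|\tilde{x}'|^2)|\tilde{x}-\tilde{x}'|^2$. I would prove this by applying the $G$-It\^o formula to $|\Delta_s|^2$ with $\Delta_s=\overline{X}^x_s-\overline{X}^{x'}_s$, decomposing every coefficient difference into its same-$\tilde{x}$ and $\tilde{x}$-perturbation pieces as above. The dissipativity assumption (H2), applied to the same-$\tilde{x}$ piece, contributes exactly $-\eta|\Delta_s|^2\,ds$ inside the $G$-map, while the $\tilde{x}$-perturbation piece is bounded by $L_1|\tilde{x}-\tilde{x}'|$ times either $|\Delta_s|$ or a quantity controlled by the uniform moment bound $\sup_s\mathbb{\hat{E}}[|\overline{X}^{x'}_s|^2]\leq C(1+|\tilde{x}'|^2)$ (itself obtained from (H2) applied to $\overline{X}^{x'}$ versus $0$). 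A Young inequality of the form $2ab\leq \tfrac{\eta}{2}a^2+2\eta^{-1}b^2$ then yields $\tfrac{d}{ds}\mathbb{\hat{E}}[|\Delta_s|^2]\leq -\tfrac{\eta}{2}\mathbb{\hat{E}}[|\Delta_s|^2]+C(1+|\tilde{x}'|^2)|\tilde{x}-\tilde{x}'|^2$, and Gronwall closes. Without (H2) one would only obtain a hopeless $e^{L_1 s}$ bound; dissipativity is exactly the mechanism that absorbs the Lipschitz perturbation into a time-uniform estimate. Combining this stability bound with the splitting above yields (iv), and continuity of $\widetilde{G}$ is immediate.
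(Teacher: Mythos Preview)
Your proposal is correct and follows essentially the same route as the paper. Parts (i)--(iii) are dispatched identically (the paper simply says ``obvious due to the sublinearity of $\mathbb{\hat{E}}$''), and for (iv) both arguments hinge on the same two ingredients: a telescoping decomposition of the integrands $\widetilde{b},\widetilde{h}_{ij},\widetilde{\sigma}^A$ into a ``change $\tilde{x}$'' piece and a ``change fast trajectory'' piece, and the time-uniform stability estimate $\sup_{s\geq 0}\mathbb{\hat{E}}[|\overline{X}^{(\tilde{x},\bar{x})}_s-\overline{X}^{(\tilde{x}',\bar{x})}_s|^2]\leq C|\tilde{x}-\tilde{x}'|^2$ coming from the dissipativity (H2). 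The paper imports this stability bound from \cite{HW2} (its equation \eqref{myw503}), whereas you sketch a direct proof via $G$-It\^o's formula and Young's inequality; your sketch is correct, and in fact yields the clean bound $C(L_1,\eta)|\tilde{x}-\tilde{x}'|^2$ without the extra polynomial factor you wrote (the moment bound on $\overline{X}^{x'}$ you mention is not actually needed there, since the Lipschitz perturbation pieces from (H1) are already uniformly $O(|\tilde{x}-\tilde{x}'|)$). The only organizational difference is that you separate the $(p,A)$-variation from the $\tilde{x}$-variation using sublinearity of $\widetilde{G}$, while the paper handles both simultaneously inside the expectation; these are equivalent.
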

\begin{proof}
We only prove Assertion (iv), since the others  are obvious due to the sublinearity of $\mathbb{\hat{E}}$. Without loss of generality, assume that $\widetilde{h}_{ij}=0$,  $i,j=1,\ldots,d$.
 Recalling Assertion (iii) of Lemma 4.3 in \cite{HW2} (taking $\tilde{x}$ as the control argument),  we obtain that
\begin{align}\label{myw503}
\sup\limits_{t\geq 0}\mathbb{\hat{E}}\left[\left|  \overline{X}^{(\tilde{x},\bar{x})}_t- \overline{X}^{(\tilde{x}^{\prime},\bar{x})}_t \right|^2\right]\leq C(L_1,\eta)|\tilde{x}-\tilde{x}^{\prime}|^2, \ \forall \bar{x}\in\mathbb{R}^n.
\end{align}
It follows that
\begin{align*}
\mathbb{\hat{E}}\left[\left|\langle p, \widetilde{b}(\tilde{x},\overline{X}^{(\tilde{x},\bar{x})}_s)\rangle-\langle p^{\prime}, \widetilde{b}(\tilde{x}^{\prime},\overline{X}^{(\tilde{x}^{\prime},\bar{x})}_s)\rangle \right|\right]&\leq |p|\mathbb{\hat{E}}\left[\left| \widetilde{b}(\tilde{x},\overline{X}^{(\tilde{x},\bar{x})}_s)- \widetilde{b}(\tilde{x}^{\prime},\overline{X}^{(\tilde{x}^{\prime},\bar{x})}_s)\right|\right]+|p-p^{\prime}|\mathbb{\hat{E}}\left[\left| \widetilde{b}(\tilde{x}^{\prime},\overline{X}^{(\tilde{x}^{\prime},\bar{x})}_s)\right|\right]\\
&\leq C(L_1,\eta)|p||\tilde{x}-\tilde{x}^{\prime}|+L_2(1+|\tilde{x}^{\prime}|)|p-p^{\prime}|,
\end{align*}
where we have used the fact that
$
\left| \widetilde{b}(\tilde{x}^{\prime},\overline{X}^{(\tilde{x}^{\prime},\bar{x})}_s)\right|\leq L_2(1+|\tilde{x}^{\prime}|)
$
(see assumption (H3)) in the last inequality.
By a similar analysis, we also deduce that
\begin{align*}
&\mathbb{\hat{E}}\left[\left|\widetilde{\sigma}^{\top}(\tilde{x},\overline{X}^{(\tilde{x},\bar{x})}_s)A\widetilde{\sigma}(\tilde{x},\overline{X}^{(\tilde{x},\bar{x})}_s)-\widetilde{\sigma}^{\top}(\tilde{x}^{\prime},\overline{X}^{(\tilde{x}^{\prime},\bar{x})}_s)A^{\prime}\widetilde{\sigma}(\tilde{x}^{\prime},\overline{X}^{(\tilde{x}^{\prime},\bar{x})}_s)\right|\right]\\
&\leq C(L_1,L_2,\eta)(1+|\tilde{x}|+|\tilde{x}^{\prime}|)|A||\tilde{x}-\tilde{x}^{\prime}|+C(L_2)(1+|\tilde{x}^{\prime}|^2)|{A}-{A}^{\prime}|.
\end{align*}
Consequently, by the definition of $\widetilde{G}$ and Lemma \ref{myw901}, we derive that
\begin{align*}
&|\widetilde{G}(\tilde{x},p,A)-\widetilde{G}(\tilde{x}^{\prime},p^{\prime},A^{\prime})|
\leq \limsup_{T\rightarrow\infty} \frac{1}{T}\int^T_0\mathbb{\hat{E}}\left[\left|\langle p, \widetilde{b}(\tilde{x},\overline{X}^{(\tilde{x},\bar{x})}_s)\rangle-\langle p^{\prime}, \widetilde{b}(\tilde{x}^{\prime},\overline{X}^{(\tilde{x}^{\prime},\bar{x})}_s)\rangle \right|\right]ds
\\&\ \ \ \ \ \ +\limsup_{T\rightarrow\infty} \frac{1}{T}\mathbb{\hat{E}}\left[\left|\int^T_0 \left(\widetilde{\sigma}^{\top}(\tilde{x},\overline{X}^{(\tilde{x},\bar{x})}_s)A\widetilde{\sigma}(\tilde{x},\overline{X}^{(\tilde{x},\bar{x})}_s)-\widetilde{\sigma}^{\top}(\tilde{x}^{\prime},\overline{X}^{(\tilde{x}^{\prime},\bar{x})}_s)A^{\prime}\widetilde{\sigma}(\tilde{x}^{\prime},\overline{X}^{(\tilde{x}^{\prime},\bar{x})}_s)\right)d\langle B\rangle_s\right|\right]
\\&\ \ \ \leq C(L_1,L_2,\eta)(1+|\tilde{x}|^2+|\tilde{x}^{\prime}|^2)\left[(|p|+|A|)|\tilde{x}-\tilde{x}^{\prime}|+|{p}-{p}^{\prime}|+|{A}-{A}^{\prime}|\right],
\end{align*}
which   is the desired result.
\end{proof}

Next,  we  introduce the averaged PDE:
\begin{equation}
\begin{cases}
&\partial_{t}\widetilde{u}-\widetilde{G}\left(\tilde{x},D\widetilde{u},D^{2}\widetilde{u}\right)=0,\ \forall (t,\tilde{x})\in(0,T)\times\mathbb{R}^{n},\\
&\widetilde{u}(0,\tilde{x})=\varphi(\tilde{x}),\ \forall\tilde{x}\in\mathbb{R}^n,
 \label{PDE511}%
\end{cases}
\end{equation}
where  $\varphi\in C(\mathbb{R}^n)$ satisfies the polynomial growth condition.
The above PDE has  a unique viscosity solution $\widetilde{u}$ of polynomial growth (see  Theorem \ref{myw521}). For the definition and basic properties of viscosity solution, we
refer the reader to Crandall,  {Ishii} and Lions \cite{CMI}.

\begin{example}\label{myw1020}{
\upshape
Assume that $G(A)=\frac{1}{2}\mathrm{tr}[A]$ and $\widetilde{h}_{ij}=\overline{h}_{ij}=0, i,j=1,\ldots,d$.
Then, the $G$-Brownian motion reduces to a Brownian motion. Denote by
\[
\widetilde{b}(\tilde{x}):=\lim\limits_{t\rightarrow\infty}\frac{1}{t}\mathbb{\hat{E}}\left[\int^t_0\widetilde{b}(\tilde{x},\overline{X}^{x}_s) ds\right],\ \widetilde{a}(\tilde{x}):=\lim\limits_{t\rightarrow\infty}\frac{1}{t}\mathbb{\hat{E}}\left[\int^t_0\widetilde{\sigma}(\tilde{x},\overline{X}^{x}_s)\widetilde{\sigma}^{\top}(\tilde{x},\overline{X}^{x}_s)ds\right], \ \forall \bar{x}\in\mathbb{R}^n.
\]
In this case, the corresponding generator function $\widetilde{G}$ is given by
\[
\widetilde{G}(\tilde{x},p,A)=\langle p,\widetilde{b}(\tilde{x})\rangle+\frac{1}{2}\mathrm{tr}[\widetilde{a}(\tilde{x}) A], \ \forall(\tilde{x},p,A)\in\mathbb{R}^n\times\mathbb{R}^n\times\mathbb{S}(n).
\]
Under some appropriate conditions, Khasminskii \cite{Kh} proved that $u^{\varepsilon}(t,\tilde{x},\bar{x})$ converges to $\widetilde{u}(t,\tilde{x})$ through
 the martingale problem approach. Moreover, $\widetilde{X}^{\varepsilon,x}_t$ converges in law to $\widetilde{X}^{\tilde{x}}_t$, where
\[
\widetilde{X}^{\tilde{x}}_t=\tilde{x}+\int^t_0\widetilde{b}(\widetilde{X}^{\tilde{x}}_s)ds+\int^t_0\sqrt{\widetilde{a}}(\widetilde{X}^{\tilde{x}}_s)dW_s.
\]
Here $\sqrt{\widetilde{a}}$ is a square root of the $n\times n$ matrix $\widetilde{a}$ and $W$ is a $n$-dimensional Brownian motion.
}
\end{example}
\begin{example}{
\upshape
Suppose that all the coefficients of $G$-SDE  are independent of the slow component $\widetilde{X}^{\varepsilon,x}$, and $\widetilde{h}_{ij}=\overline{h}_{ij}=0, i,j=1,\ldots,d$.
Then, the $G$-SDE \eqref{myw201} reduces to
\begin{align}\label{myw208}
\widetilde{X}^{\varepsilon,x}_t=\tilde{x}+\int^t_0\widetilde{b}(\overline{X}^{\varepsilon,{x}}_s)ds+\int^t_0\widetilde{\sigma}(\overline{X}^{\varepsilon,{x}}_s)dB_s,\ \
\overline{X}^{\varepsilon,{x}}_t=\bar{x}+\int^t_0\frac{\overline{b}(\overline{X}^{\varepsilon,{x}}_s)}{\varepsilon}ds+\int^t_0\frac{\overline{\sigma}(\overline{X}^{\varepsilon,{x}}_s)}{\sqrt{\varepsilon}}dB_s.
\end{align}
Furthermore, assume that $\overline{b}(0)=0$  and $\overline{\sigma}(0)=0$. It is obvious that $\overline{X}^{\varepsilon,0}_t=\overline{X}^{0}_t=0$ for each $\varepsilon\in(0,1)$. Thus, from Assertion (ii) of Lemma \ref{myw2} in appendix A, we obtain that
\[\hat{\mathbb{E}}\left[\left|\overline{X}^{\varepsilon,{x}}_t\right|^2\right]\leq \exp\left(-\frac{2\eta t}{\varepsilon}\right)|\bar{x}|^2,\ \forall t\geq 0.\]
It follows  that
\[
\mathbb{\hat{E}}\left[\left|\widetilde{X}^{\varepsilon,x}_t-\tilde{x}-\widetilde{b}(0)t-\widetilde{\sigma}(0)B_t\right| \right]\leq C(L_1,t)\left(\mathbb{\hat{E}}\left[\int^t_0\left|\overline{X}^{\varepsilon,{x}}_s\right|^2ds \right]\right)^{\frac{1}{2}} \leq C(L_1,\eta,t)|\bar{x}|\sqrt{\varepsilon},
\]
which implies that $\widetilde{X}^{\varepsilon,x}_t$ converges  to $\tilde{x}+\widetilde{b}(0)t+\widetilde{\sigma}(0)B_t$ in $L^1_G$-norm.

According to Lemma \ref{myw203}, we can derive that the function \[\widetilde{u}(t,\tilde{x})=\mathbb{\hat{E}}\left[\varphi(\tilde{x}+\widetilde{b}(0)t+\widetilde{\sigma}(0)B_t)\right]\] is the unique viscosity solution to the averaged PDE \eqref{PDE511} with generator
\[
\widetilde{G}(p,A)=\lim\limits_{t\rightarrow\infty}\frac{1}{t}\mathbb{\hat{E}}\left[\int^t_0\langle p, \widetilde{b}(\overline{X}^{0}_s)\rangle ds+\frac{1}{2}\int^T_0\widetilde{\sigma}^A(\overline{X}^{0}_s)d\langle B\rangle_s\right]=\langle p,\widetilde{b}(0)\rangle+G(\widetilde{\sigma}^A(0)).
\]

}
\end{example}

Now, we are in a position to state the main results.
\begin{theorem} \label{myw521}
Suppose assumptions \emph{(H1)}-\emph{(H3)} hold. Then, for each $\varphi\in C(\mathbb{R}^n)$ of  polynomial growth,
the averaged PDE \eqref{PDE511} admits a unique viscosity solution $\widetilde{u}$  satisfying the polynomial growth condition, and
\[
\lim\limits_{\varepsilon\rightarrow0}u^{\varepsilon}(t,\tilde{x},\bar{x})=\widetilde{u}(t,\tilde{x}), \ \forall (t,\tilde{x},\bar{x})\in [0,\infty)\times\mathbb{R}^{2n}.\]
\end{theorem}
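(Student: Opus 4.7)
The plan is to implement the four-step program sketched in the introduction: establish uniform regularity of the family $\{u^{\varepsilon}\}_{\varepsilon\in(0,1)}$, extract a convergent subsequence by Arzel\`a--Ascoli, identify any limit as a viscosity solution of the averaged PDE \eqref{PDE511}, and then appeal to uniqueness of that viscosity solution to upgrade subsequential convergence to full convergence.

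\textbf{Step 1 (uniqueness for the averaged PDE).} By Lemma~\ref{myw502}, $\widetilde{G}(\tilde{x},p,A)$ is sublinear and monotone in $(p,A)$ and jointly continuous in $(\tilde{x},p,A)$ with a locally Lipschitz estimate. This is precisely the structure under which the standard Crandall--Ishii--Lions comparison principle for fully nonlinear parabolic equations yields at most one viscosity solution in the class of functions of polynomial growth; I would simply invoke this (or a close adaptation, since the equation is degenerate but has the $\widetilde{G}$-structure analogous to a $G$-heat equation with $\tilde{x}$-dependent generator).

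\textbf{Step 2 (a priori estimates and equicontinuity).} Assumption (H3) says that $\widetilde{b},\widetilde{h}_{ij},\widetilde{\sigma}$ are bounded in the fast variable, so standard $G$-SDE moment estimates (Lemma~\ref{myw901} plus Gronwall) give
\begin{equation*}
\mathbb{\hat{E}}\bigl[\sup_{s\in[0,t]}|\widetilde{X}^{\varepsilon,x}_s|^{p}\bigr]\leq C(p,t)(1+|\tilde{x}|^{p}),\qquad
\mathbb{\hat{E}}\bigl[|\widetilde{X}^{\varepsilon,x}_{t}-\widetilde{X}^{\varepsilon,x}_{s}|^{p}\bigr]\leq C(p,t)(1+|\tilde{x}|^{p})|t-s|^{p/2},
\end{equation*}
uniformly in $\varepsilon$. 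Note these bounds are independent of $\bar{x}$, which is the key gain from (H3). Combined with the definition $u^{\varepsilon}(t,\tilde{x},\bar{x})=\mathbb{\hat{E}}[\varphi(\widetilde{X}^{\varepsilon,x}_{t})]$ and the DPP \eqref{myw607}, this yields polynomial growth of $u^{\varepsilon}$ in $\tilde{x}$ and joint H\"older continuity in $t$ and $\tilde{x}$, all uniformly in $\varepsilon$ and $\bar{x}$ on compact sets. The dependence on $\bar{x}$ requires a separate stability estimate for $(\widetilde{X}^{\varepsilon,x},\overline{X}^{\varepsilon,x})$ in the $\bar{x}$-initial datum, using the dissipativity (H2); this gives a modulus in $\bar{x}$ that does not blow up as $\varepsilon\to0$. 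By Arzel\`a--Ascoli a subsequence $u^{\varepsilon_{k}}$ converges locally uniformly to some continuous $\widetilde{u}(t,\tilde{x},\bar{x})$, and the uniform-in-$\varepsilon$ estimates will in fact show $\widetilde{u}$ is constant in $\bar{x}$, hence we may write $\widetilde{u}=\widetilde{u}(t,\tilde{x})$.

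\textbf{Step 3 (identification of the limit via Khasminskii discretization).} This is the main obstacle. I would show $\widetilde{u}$ is a viscosity subsolution (the supersolution case being symmetric) of \eqref{PDE511}. Take $(t_{0},\tilde{x}_{0})$ and a smooth test function $\psi$ such that $\widetilde{u}-\psi$ has a strict local maximum at $(t_{0},\tilde{x}_{0})$. Using the DPP \eqref{myw607} for $u^{\varepsilon_{k}}$ and Dynkin/It\^o applied to $\psi$, one obtains, for small $\delta>0$,
\begin{equation*}
0\geq \mathbb{\hat{E}}\Bigl[\int_{0}^{\delta}\bigl(-\partial_{t}\psi+\langle\widetilde{b}(\tilde{x}_{0},\overline{X}^{\varepsilon_{k},x_{0}}_{s}),D\psi\rangle\bigr)ds+\sum_{i,j}\int_{0}^{\delta}\bigl(\langle\widetilde{h}_{ij},D\psi\rangle+\tfrac{1}{2}\widetilde{\sigma}^{D^{2}\psi}_{ij}\bigr)d\langle B^{i},B^{j}\rangle_{s}\Bigr]+o(\delta),
\end{equation*}
plus the $G$-concave penalization from $-G(\cdots)$; here one uses that on $[0,\delta]$ the slow component is close to $\tilde{x}_{0}$ by Step~2. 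Now I would apply Khasminskii's trick: split $[0,\delta]$ into subintervals of length $\Delta=\Delta(\varepsilon_{k})$ with $\Delta\to0$ and $\Delta/\varepsilon_{k}\to\infty$. On each subinterval, freeze $\tilde{x}$ at the left endpoint and compare $\overline{X}^{\varepsilon_{k}}$ with the auxiliary ergodic process \eqref{myw501} after time rescaling $s\mapsto s/\varepsilon_{k}$; the estimate \eqref{myw125} in the proof of Lemma~\ref{myw123} gives precisely the averaging bound
\begin{equation*}
\Bigl|\mathbb{\hat{E}}\Bigl[\int_{0}^{\Delta}(\cdots)ds+\cdots\Bigr]-\widetilde{G}(\tilde{x}_{0},D\psi,D^{2}\psi)\,\Delta\Bigr|\leq C\,\varepsilon_{k}(1+|x|^{2}),
\end{equation*}
summing over the subintervals and letting $\varepsilon_{k}\to0$ then $\delta\to0$ yields
\begin{equation*}
\partial_{t}\psi(t_{0},\tilde{x}_{0})-\widetilde{G}\bigl(\tilde{x}_{0},D\psi(t_{0},\tilde{x}_{0}),D^{2}\psi(t_{0},\tilde{x}_{0})\bigr)\leq 0,
\end{equation*}
which is the subsolution inequality. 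Existence in Step~1 is in fact obtained as a byproduct here.

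\textbf{Step 4 (conclusion).} Since every subsequential limit of $\{u^{\varepsilon}\}$ is a viscosity solution of \eqref{PDE511} of polynomial growth, and such a solution is unique by Step~1, the whole family $u^{\varepsilon}$ converges pointwise (in fact locally uniformly) to the unique $\widetilde{u}$, as claimed. The delicate point throughout is Step~3: controlling the discretization error uniformly on the support of the occupation measure of $\overline{X}^{\varepsilon,x}$, for which the a priori bound \eqref{myw125} from Lemma~\ref{myw123} together with the dissipative stability in Lemma~\ref{myw502}(iv) is exactly what is needed.
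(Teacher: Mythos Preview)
Your overall architecture---uniform estimates, Arzel\`a--Ascoli, DPP plus Khasminskii discretization to identify the limit, then uniqueness---is exactly the paper's strategy. Two genuine gaps, however, separate your sketch from a proof.

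\textbf{(a) Reduction to bounded Lipschitz data.} The paper does \emph{not} run the whole argument for polynomial-growth $\varphi$ directly. It first proves convergence for $\varphi\in C_{b.lip}(\mathbb{R}^n)$ (Lemmas~\ref{myw608} and~\ref{myw611}), where $u^{\varepsilon}$ and the limit $\widetilde{u}^{*}$ are bounded, so the test function $\psi$ in the viscosity argument can be taken in $C^{3}_{b}$ and the Taylor remainder/truncation estimates are clean. Only afterwards does it approximate a general polynomial-growth $\varphi$ by $\varphi_{N}\in C_{b.lip}$, using the uniform moment bound of Lemma~\ref{myw6101} to control $\hat{\mathbb{E}}[|\varphi_{N}-\varphi|(\widetilde X^{\varepsilon,x}_{t})]$, and then invokes stability of viscosity solutions. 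Your Step~2 implicitly attempts the unbounded case directly; this is not wrong in principle, but several of your ``$o(\delta)$'' and truncation steps (and the use of item~(ii) in Lemma~\ref{myw608}) tacitly rely on boundedness.

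\textbf{(b) The quadratic term in the Taylor expansion.} In your Step~3 you jump straight to an integrand of the form $\tfrac{1}{2}\widetilde{\sigma}^{D^{2}\psi}_{ij}\,d\langle B^{i},B^{j}\rangle_{s}$, but Taylor expansion of $\psi(t-\delta,\widetilde X^{\varepsilon,x}_{\delta})$ gives instead $\tfrac{1}{2}\langle D^{2}\psi\,\xi^{2,m},\xi^{2,m}\rangle$ with $\xi^{2,m}=\int_{0}^{\delta}\widetilde{\sigma}\,dB_{s}$. The passage from the quadratic form in the stochastic integral to the $d\langle B\rangle$-integral is a nontrivial step under $G$-expectation: the difference is a \emph{symmetric} $G$-martingale, so it can be dropped inside $\hat{\mathbb{E}}[\cdot]$ without affecting the value (this is the content of Lemma~\ref{myw619}). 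Without this observation your displayed inequality in Step~3 is not justified, and the phrase ``plus the $G$-concave penalization from $-G(\cdots)$'' is not the mechanism the paper uses: the function $G$ never appears explicitly at this stage---it is hidden in $\hat{\mathbb{E}}$ and in $d\langle B\rangle$, and only after Khasminskii averaging does the limit $\widetilde{G}$ emerge via Lemma~\ref{myw202} (which is \eqref{myw125} lifted to conditional form through the Markov property). Also note that your displayed inequality has the wrong sign: from $\psi\geq\widetilde u^{*}$ and the DPP one gets $\psi(t,\tilde x)\leq\limsup_{m}\hat{\mathbb{E}}[\psi(t-\delta,\widetilde X^{\varepsilon_{m},x}_{\delta})]$, hence $0\leq\cdots$, which after averaging yields the subsolution inequality $\partial_{t}\psi-\widetilde G\leq 0$.
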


The proof of Theorem \ref{myw521} will be given in section 4.  The following result is a direct consequence of Theorem \ref{myw521}.
\begin{corollary}\label{myw1905}
Suppose all the assumptions of Theorem \emph{\ref{myw521}} hold. Then, for each $(t,x)\in [0,\infty)\times\mathbb{R}^{2n}$ with $x=(\tilde{x},\bar{x})$, the slow $G$-diffusion process $\widetilde{X}^{\varepsilon,x}_t$ converges in law as $\varepsilon\rightarrow 0$, i.e.,
\[
\lim\limits_{\varepsilon\rightarrow0}\mathbb{\hat{E}}\left[\varphi(\widetilde{X}^{\varepsilon,x}_t)\right]=\widetilde{u}(t,\tilde{x}).
\]
\end{corollary}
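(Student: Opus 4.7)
The plan is to follow the blueprint sketched in the introduction: build uniform regularity and growth bounds on $u^{\varepsilon}$, extract a subsequential limit by Arzel\`a--Ascoli, identify that limit as a viscosity solution of the averaged PDE \eqref{PDE511} via a Khasminskii-type discretization combined with the dynamic programming principle \eqref{myw607} and the ergodic $G$-BSDE characterization of $\widetilde{G}$ from Lemma \ref{myw123}, and finally invoke a comparison principle to force convergence of the whole family.

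First I would establish the a priori estimates. Using the dissipativity condition (H2) together with a $G$-It\^o application to $|\overline{X}^{\varepsilon,x}_t|^2$, one obtains that $\hat{\mathbb{E}}[|\overline{X}^{\varepsilon,x}_t|^2]$ is bounded by a constant depending only on $L_1$, $\eta$, and $1+|\tilde{x}|^2+|\bar x|^2 e^{-2\eta t/\varepsilon}$, so the fast component does not blow up as $\varepsilon\to 0$. Feeding this into (H3) and standard $G$-SDE estimates gives a polynomial-in-$\tilde x$ bound on $\hat{\mathbb{E}}[\sup_{s\leq t}|\widetilde{X}^{\varepsilon,x}_s|^p]$ that is \emph{uniform in} $\varepsilon\in(0,1)$; likewise a H\"older-in-time estimate $\hat{\mathbb{E}}[|\widetilde{X}^{\varepsilon,x}_{t+h}-\widetilde{X}^{\varepsilon,x}_t|^2]\leq C(1+|x|^2)h$, again uniform in $\varepsilon$. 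Representing $u^{\varepsilon}$ as $\hat{\mathbb{E}}[\varphi(\widetilde{X}^{\varepsilon,x}_t)]$ and using the continuous dependence of the $G$-SDE on the initial datum $\tilde x$ (here (H2) is again essential so the factor is $\varepsilon$-free), one deduces that $\{u^{\varepsilon}\}_{\varepsilon\in(0,1)}$ has uniform polynomial growth and is locally equicontinuous in $(t,\tilde x)$; continuity in $\bar x$ is likewise under control by (H1). Arzel\`a--Ascoli then produces a subsequence $\varepsilon_k\downarrow 0$ and a continuous function $\widetilde{u}(t,\tilde{x},\bar{x})$ of polynomial growth such that $u^{\varepsilon_k}\to \widetilde{u}$ locally uniformly.

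Next I would show that the limit is independent of $\bar{x}$ and is a viscosity solution of \eqref{PDE511}. Independence of $\bar{x}$ follows because any two initial values $\bar{x},\bar{x}'$ give $\hat{\mathbb{E}}[|\overline{X}^{\varepsilon,x}_t-\overline{X}^{\varepsilon,x'}_t|^2]\leq e^{-2\eta t/\varepsilon}|\bar x-\bar x'|^2$ and consequently $|u^{\varepsilon}(t,\tilde x,\bar x)-u^{\varepsilon}(t,\tilde x,\bar x')|\to 0$ as $\varepsilon\to 0$. To show $\widetilde{u}(t,\tilde x)$ is a viscosity subsolution at an interior maximum $(t_0,\tilde x_0)$ of $\widetilde{u}-\psi$, I will pass to maxima $(t_k,\tilde x_k,\bar x_k)\to(t_0,\tilde x_0,\bar x_0)$ of $u^{\varepsilon_k}-\psi$ (regularizing in $\bar x$ by adding $|\bar x-\bar x_k|^2/\alpha$ and letting $\alpha\to 0$ afterwards). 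Applying the dynamic programming identity \eqref{myw607} over a small time step $\delta_k$ chosen with $\delta_k\to 0$ and $\delta_k/\varepsilon_k\to\infty$ (the Khasminskii two-scale split), expanding $\psi(t_k-s,\widetilde{X}^{\varepsilon_k,x_k}_s)$ by $G$-It\^o, and freezing the slow component at $\tilde x_k$ on the short interval $[0,\delta_k]$ thanks to the H\"older estimate above, the problem reduces to bounding
\[
\frac{1}{\delta_k}\hat{\mathbb{E}}\!\left[\int_0^{\delta_k}\!\langle D\psi, \widetilde{b}(\tilde x_k,\overline{X}^{\varepsilon_k,x_k}_s)\rangle\, ds+\sum_{i,j}\!\int_0^{\delta_k}\!\!\Big(\langle D\psi,\widetilde{h}_{ij}\rangle+\tfrac12\widetilde{\sigma}^{D^2\psi}_{ij}\Big)(\tilde x_k,\overline{X}^{\varepsilon_k,x_k}_s)\,d\langle B^i,B^j\rangle_s\right].
\]
A time rescaling $s=\varepsilon_k r$ turns $\overline{X}^{\varepsilon_k,x_k}_{\varepsilon_k r}$ into the auxiliary $G$-diffusion \eqref{myw501} run up to time $\delta_k/\varepsilon_k\to\infty$, and the ergodic estimate \eqref{myw125} then produces exactly $\widetilde{G}(\tilde x_0,D\psi,D^2\psi)$ in the limit. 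Combined with the subsolution inequality from \eqref{myw607} this yields $\partial_t\psi-\widetilde{G}(\tilde x_0,D\psi,D^2\psi)\leq 0$; the supersolution direction is symmetric.

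Finally, the properties (i)--(iv) of $\widetilde{G}$ in Lemma \ref{myw502} show that \eqref{PDE511} is a fully nonlinear parabolic equation with a sublinear, uniformly elliptic (inherited from non-degeneracy of $G$) and locally Lipschitz generator, so the standard Crandall--Ishii--Lions comparison principle delivers uniqueness of a viscosity solution in the class of polynomial growth; existence is now furnished by $\widetilde{u}$ itself. Uniqueness in turn forces every subsequential limit to coincide with $\widetilde{u}$, hence the full family $u^{\varepsilon}$ converges pointwise (and locally uniformly) to $\widetilde{u}$. The main obstacle I anticipate is the rigorous execution of the two-scale viscosity argument in the middle paragraph: one must carefully choose the intermediate scale $\delta_k$, control the error incurred by freezing $\tilde x_k$ while still exploiting ergodicity of $\overline{X}^{x}$, and handle the nonlinear conditional $G$-expectation $\hat{\mathbb{E}}_{\cdot}$ in \eqref{myw607}, all within the viscosity framework; the ergodic $G$-BSDE estimate \eqref{myw125} is precisely the tool designed to close this gap.
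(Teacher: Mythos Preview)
Your proposal is essentially correct, but note first that in the paper this corollary carries no separate proof: it is declared a ``direct consequence'' of Theorem~\ref{myw521}, since by definition $u^{\varepsilon}(t,\tilde x,\bar x)=\hat{\mathbb E}[\varphi(\widetilde X^{\varepsilon,x}_t)]$. What you have outlined is really a proof of Theorem~\ref{myw521} itself, and your overall strategy (uniform estimates $\Rightarrow$ Arzel\`a--Ascoli $\Rightarrow$ viscosity identification via Khasminskii discretization and the ergodic bound \eqref{myw125} $\Rightarrow$ comparison) matches the paper's Section~4.

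The one substantive difference is in the viscosity verification. You propose to localize by passing to approximate maxima $(t_k,\tilde x_k,\bar x_k)$ of $u^{\varepsilon_k}-\psi$, penalizing in $\bar x$. The paper instead avoids touching $u^{\varepsilon_m}$ in the viscosity step altogether: having already shown the limit $\widetilde u^{*}$ is independent of $\bar x$ (Lemma~\ref{myw608}), it fixes the touching point $(t,\tilde x)$ of $\widetilde u^{*}$ and $\psi$, applies the dynamic programming identity \eqref{myw607} to $u^{\varepsilon_m}$ at that \emph{fixed} $(t,\tilde x,\bar x)$ for an arbitrary $\bar x$, and then uses item~(ii) of Lemma~\ref{myw608} to pass to a ``limit DPP'' for $\widetilde u^{*}$ itself, yielding $\psi(t,\tilde x)\le\limsup_m\hat{\mathbb E}[\psi(t-\delta,\widetilde X^{\varepsilon_m,x}_\delta)]$ directly. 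From there a Taylor expansion of $\psi$ plus the Khasminskii split handles the rest. This sidesteps the delicate issue you flag yourself --- that $\psi$ does not depend on $\bar x$, so $u^{\varepsilon_k}-\psi$ need not have well-behaved maxima in that variable --- and makes the argument considerably cleaner. Your penalization route can be made to work, but the paper's direct ``limit DPP'' device is the simpler execution.
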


The Corollary \ref{myw1905} indicates that the distribution of the slow component can be approximated by the solution to the averaged PDE \eqref{PDE511}, which is independent of the argument $\bar{x}$.

\begin{example}{
\upshape
Consider the $G$-SDE \eqref{myw208}. Assume that $n=d=1$  and $\widetilde{\sigma}\equiv 0$.
 In this case, the generator $\widetilde{G}$  is given by
\begin{align*}
\widetilde{G}(p)=p^+\lim\limits_{t\rightarrow\infty}\frac{1}{t}\mathbb{\hat{E}}\left[\int^t_0 \widetilde{b}(\overline{X}^{x}_s) ds\right]+p^-\lim\limits_{t\rightarrow\infty}\frac{1}{t}\mathbb{\hat{E}}\left[-\int^t_0 \widetilde{b}(\overline{X}^{x}_s) ds\right]=:\overline{\mu}p^+-\underline{\mu}p^-.
\end{align*}
Then, from Proposition 2.2.7 of \cite{P10}, there exists a maximally distributed random variable
$\widetilde{\zeta}$, such that
the function \[\widetilde{u}(t,\tilde{x}):=\mathbb{\hat{E}}\left[\varphi(\tilde{x}+t\widetilde{\zeta})\right]=\max_{\underline{\mu}\leq r\leq\overline{u}}\varphi(\tilde{x}+rt)\] is the unique viscosity solution to the following PDE:
\begin{equation*}
\begin{cases}
&\partial_{t}\widetilde{u}-\widetilde{G}(D\widetilde{u})=0,\ \forall (t,\tilde{x})\in(0,T)\times\mathbb{R}^{n},\\
&\widetilde{u}(0,\tilde{x})=\varphi(\tilde{x}),\ \forall\tilde{x}\in\mathbb{R}^n.
\end{cases}
\end{equation*}

By Theorem \ref{myw521}, we deduce that $
\widetilde{X}^{\varepsilon,x}_t=\tilde{x}+\int^t_0\widetilde{b}(\overline{X}^{\varepsilon,{x}}_s)ds
$  converges  in law to the maximal distribution $\tilde{x}+t\widetilde{\zeta}$ as $\varepsilon\rightarrow 0$, which can be seen as the law of large number for $G$-diffusion process.
Therefore, we usually  cannot obtain the pointwise convergence of $\widetilde{X}^{\varepsilon,x}_t$ (cf.  \cite{FS}), which is different from the linear case (cf. \cite{FW}).
}
\end{example}

Moreover, with the help of Markov property for $G$-SDEs, we can also deal with  the finite dimensional distribution of the slow $G$-diffusion process.

\begin{theorem}\label{myw1906}
Assume that \emph{(H1)}-\emph{(H3)} hold.
Then, for each $x\in\mathbb{R}^{2n}$ and  $\varphi\in C(\mathbb{R}^{k\times n})$ of  polynomial growth, we have, for any
$0\leq t_1\leq t_2<\cdots t_k<\infty$,
\[
\lim\limits_{\varepsilon\rightarrow0}\mathbb{\hat{E}}\left[\varphi(\widetilde{X}^{\varepsilon,x}_{t_1},\widetilde{X}^{\varepsilon,x}_{t_2},\cdots,\widetilde{X}^{\varepsilon,x}_{t_k})\right]=\lim\limits_{\varepsilon\rightarrow0}\mathbb{\hat{E}}\left[\varphi^{k-1}(\widetilde{X}^{\varepsilon,{x}}_{t_1})\right],
\]
where $\varphi ^{k-1}$ is defined iteratively
through
\begin{eqnarray*}
\varphi ^{1}(\tilde{x}^1,\tilde{x}^2,\cdots \tilde{x}^{k-1}) &=&\lim\limits_{\varepsilon\rightarrow0}\mathbb{\hat{E}}\left[\varphi(\tilde{x}^1,\tilde{x}^2,\cdots, \tilde{x}^{k-1},\widetilde{X}^{\varepsilon,(\tilde{x}^{k-1},0)}_{t_k-t_{k-1}})\right], \\
&&\vdots \\
\varphi ^{k-1}(\tilde{x}^1) &=&\lim\limits_{\varepsilon\rightarrow0}\mathbb{\hat{E}}\left[\varphi^{k-2}(\tilde{x}^1,\widetilde{X}^{\varepsilon,(\tilde{x}^{1},0)}_{t_2-t_{1}})\right].
\end{eqnarray*}%
\end{theorem}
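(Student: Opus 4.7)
I would prove Theorem~\ref{myw1906} by induction on $k$, using the Markov/tower property of the conditional $G$-expectation to peel off one time point at a time and applying Theorem~\ref{myw521} (equivalently, Corollary~\ref{myw1905}) at each step. The base case $k=1$ is precisely Corollary~\ref{myw1905}. For the inductive step, fix $\varphi\in C(\mathbb{R}^{k\times n})$ of polynomial growth and observe that, by the time-homogeneous Markov property of the $G$-SDE \eqref{myw201} (which underlies Lemma~\ref{myw203}),
\[
\hat{\mathbb{E}}\bigl[\varphi(\widetilde{X}^{\varepsilon,x}_{t_1},\ldots,\widetilde{X}^{\varepsilon,x}_{t_k})\bigr]
=\hat{\mathbb{E}}\bigl[\Psi^{\varepsilon}(\widetilde{X}^{\varepsilon,x}_{t_1},\ldots,\widetilde{X}^{\varepsilon,x}_{t_{k-1}},\overline{X}^{\varepsilon,x}_{t_{k-1}})\bigr],
\]
where $\Psi^{\varepsilon}(\tilde{y}^1,\ldots,\tilde{y}^{k-1},\bar{y}):=\hat{\mathbb{E}}[\varphi(\tilde{y}^1,\ldots,\tilde{y}^{k-1},\widetilde{X}^{\varepsilon,(\tilde{y}^{k-1},\bar{y})}_{t_k-t_{k-1}})]$. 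By Theorem~\ref{myw521} applied with the continuous polynomial-growth function $z\mapsto \varphi(\tilde{y}^1,\ldots,\tilde{y}^{k-1},z)$, the inner object $\Psi^{\varepsilon}(\tilde{y}^1,\ldots,\tilde{y}^{k-1},\bar{y})$ converges pointwise, as $\varepsilon\to 0$, to a limit independent of $\bar{y}$ which coincides with $\varphi^{1}(\tilde{y}^1,\ldots,\tilde{y}^{k-1})$ in the statement (hence one may set $\bar{y}=0$ in the definition of $\varphi^{1}$).

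The core technical step is to upgrade this pointwise convergence of $\Psi^{\varepsilon}$ to a form strong enough to commute with $\hat{\mathbb{E}}[\,\cdot\,(\widetilde{X}^{\varepsilon,x}_{t_1},\ldots,\overline{X}^{\varepsilon,x}_{t_{k-1}})]$. My plan is to combine the a priori Lipschitz/H\"older estimates for the viscosity solutions $u^{\varepsilon}$ advertised in the introduction (derived via the dissipativity assumption (H2) together with the linear growth bound (H3)) with an Arzel\`a--Ascoli argument to conclude that $\{\Psi^{\varepsilon}\}_{\varepsilon}$ is equicontinuous on compacts in $(\tilde{y}^1,\ldots,\tilde{y}^{k-1},\bar{y})$ uniformly in $\varepsilon$; equicontinuity plus pointwise convergence yields locally uniform convergence to $\varphi^{1}$. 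Combined with the uniform-in-$\varepsilon$ moment bound $\hat{\mathbb{E}}[\,|\widetilde{X}^{\varepsilon,x}_{\cdot}|^{p}+|\overline{X}^{\varepsilon,x}_{\cdot}|^{p}\,]\leq C(1+|x|^{p})$ (again from (H1)--(H3)), a standard truncation-plus-tail argument using the polynomial growth of $\varphi$ lets me push the $\varepsilon\to 0$ limit inside the $\hat{\mathbb{E}}$, giving
\[
\lim_{\varepsilon\to 0}\hat{\mathbb{E}}\bigl[\varphi(\widetilde{X}^{\varepsilon,x}_{t_1},\ldots,\widetilde{X}^{\varepsilon,x}_{t_k})\bigr]
=\lim_{\varepsilon\to 0}\hat{\mathbb{E}}\bigl[\varphi^{1}(\widetilde{X}^{\varepsilon,x}_{t_1},\ldots,\widetilde{X}^{\varepsilon,x}_{t_{k-1}})\bigr].
\]

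The limit function $\varphi^{1}$ inherits continuity and polynomial growth from $\varphi$ (by the a priori estimates), so I may apply the inductive hypothesis at level $k-1$ with $\varphi^{1}$ in place of $\varphi$ and the time points $t_1<\cdots<t_{k-1}$, producing
$\lim_{\varepsilon}\hat{\mathbb{E}}[(\varphi^{1})^{k-2}(\widetilde{X}^{\varepsilon,x}_{t_1})]$; the iterative definition gives $(\varphi^{1})^{k-2}=\varphi^{k-1}$, closing the induction. The main obstacle is precisely the interchange of limit and expectation: one needs the locally uniform convergence of $u^{\varepsilon}(\cdot,\tilde{x},\bar{x})\to\widetilde u(\cdot,\tilde{x})$ jointly in the fast variable $\bar{x}$ (where the randomness $\overline{X}^{\varepsilon,x}_{t_{k-1}}$ lives and whose $\varepsilon$-dependence is nontrivial). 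Verifying the requisite equicontinuity of $\Psi^{\varepsilon}$ in $\bar y$ uniformly in $\varepsilon$---which ultimately rests on the dissipativity-driven contractivity of the fast flow and the ergodic $G$-BSDE estimates underpinning Lemma~\ref{myw123}---is where the real work lies; continuity in $(\tilde y^1,\ldots,\tilde y^{k-1})$ and the polynomial growth/moment bookkeeping are routine once these ingredients are in place.
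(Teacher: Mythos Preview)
Your proposal is correct and follows essentially the same route as the paper: the paper treats $k=2$ and leaves the iteration to the reader, uses the Markov property to write the expectation via an inner function $\Psi^\varepsilon$ evaluated at $(\widetilde X^{\varepsilon,x}_{t_1},\overline X^{\varepsilon,x}_{t_1})$, upgrades pointwise convergence of $\Psi^\varepsilon\to\varphi^1$ to locally uniform convergence (via a $C_{b.lip}$ approximation of $\varphi$ combined with the estimate of Lemma~\ref{myw603}(i), which gives modulus $\sqrt{\varepsilon}\,|\bar x-\bar x'|$ in the fast variable), and then interchanges limit and expectation by exactly the truncation-plus-moment argument you describe. Two minor corrections: the equicontinuity in $\bar y$ comes directly from Lemma~\ref{myw603}(i) rather than from the ergodic $G$-BSDE machinery of Lemma~\ref{myw123}, and for $\overline X^{\varepsilon,x}$ only an $L^2$ bound (Lemma~\ref{myw603}(iii)) is available---which suffices since $\Psi^\varepsilon$ is uniformly bounded in $\bar y$ by Lemma~\ref{myw6101}.
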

\begin{remark}{\upshape  Applying Theorem \ref{myw1906} and Kolmogorov's criterion for weak compactness (see Lemma \ref{myw6101}) to Example \ref{myw1020}, we can also derive that the slow diffusion process
$\widetilde{X}^{\varepsilon,x}$ converges weakly to  $\widetilde{X}^{\tilde{x}}$, which is the averaging principle for SDEs introduced by \cite{Kh}.
}
\end{remark}

\section{The proof of the main results}
In this section, we shall state the proof of the main results, by making use of nonlinear stochastic calculus and viscosity solution theory, which is different from the linear case.
Roughly speaking, we will prove the limit function  of $u^{\varepsilon}$ is the unique viscosity solution to the averaged PDE \eqref{PDE511}. The key point is based on
the uniform estimate \eqref{myw125}.

First, we  establish a uniform a priori
 estimate of $G$-SDE \eqref{myw201} with two time-scales, which is important for our future discussion. Let $T>0$ be a fixed constant.
\begin{lemma}\label{myw603}
Assume that the conditions  \emph{(H1)} and \emph{(H2)} hold. Then, there exists a constant $C(L_1,\eta,T)$, such that for any $x,x^{\prime}\in\mathbb{R}^{2n}$ and $t\in[0,T]$,
\begin{description}
\item[(i)]$\mathbb{\hat{E}}\left[\sup\limits_{0\leq s\leq T}\left|\widetilde{X}^{\varepsilon,x}_s-\widetilde{X}^{\varepsilon,x^{\prime}}_s\right|^2\right]\leq C(L_1,\eta,T)\left(|\tilde{x}-\tilde{x}^{\prime}|^2+\varepsilon|\bar{x}-\bar{x}^{\prime}|^2\right)$,
\item[(ii)] $\mathbb{\hat{E}}\left[\left|\overline{X}^{\varepsilon,x}_t-\overline{X}^{\varepsilon,x^{\prime}}_t\right|^2\right]\leq C(L_1,\eta,T)\left(|\tilde{x}-\tilde{x}^{\prime}|^2+|\bar{x}-\bar{x}^{\prime}|^2\right)$,
\item[(iii)]$\mathbb{\hat{E}}\left[\sup\limits_{0\leq s\leq T}\left|\widetilde{X}^{\varepsilon,x}_s\right|^2+\left|\overline{X}^{\varepsilon,x}_t\right|^2\right]\leq  C(L_1,\eta,T)\left(1+|\tilde{x}|^2+|\bar{x}|^2\right)$.
\end{description}
\end{lemma}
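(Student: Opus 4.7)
The strategy is to close a coupled Gronwall argument for the pair $(\mathbb{\hat{E}}[|\Delta\widetilde{X}_t|^2],\mathbb{\hat{E}}[|\Delta\overline{X}_t|^2])$ via $G$-It\^o's formula applied to the squared differences, using the dissipativity (H2) to obtain an effective fast contraction rate of order $\eta/\varepsilon$ and the Lipschitz assumption (H1) to control the cross coupling and the slow component. The key technical device from the $G$-expectation side is the inequality $\mathbb{\hat{E}}[\int_0^T\xi_s\,d\langle B\rangle_s]\le 2\,\mathbb{\hat{E}}[\int_0^T G(\xi_s)\,ds]$ for symmetric matrix-valued $\xi$, which is what allows (H2) to be invoked on the bracket that emerges from $G$-It\^o, once one has correctly grouped the $\overline{h}_{ij}$ and $\overline{\sigma}^\top\overline{\sigma}$ contributions into a single symmetric matrix argument of $G$.

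The first step is a preliminary form of (ii). Setting $\Delta\overline{X}:=\overline{X}^{\varepsilon,x}-\overline{X}^{\varepsilon,x'}$ and splitting each coefficient difference $\Delta\ell$ (for $\ell=\overline{b},\overline{h}_{ij},\overline{\sigma}$) into the part with $\widetilde{X}^{\varepsilon,x}$ frozen in the first slot plus a Lipschitz remainder in that slot, $G$-It\^o applied to $|\Delta\overline{X}_t|^2$ gives an integrand to which (H2) applies on the frozen part, while the remainder is absorbed by Young's inequality. This yields
\[
\mathbb{\hat{E}}[|\Delta\overline{X}_t|^2]+\frac{\eta}{\varepsilon}\int_0^t\mathbb{\hat{E}}[|\Delta\overline{X}_s|^2]\,ds\le |\Delta\bar{x}|^2+\frac{C}{\varepsilon}\int_0^t\mathbb{\hat{E}}[|\Delta\widetilde{X}_s|^2]\,ds,
\]
and hence
\[
\mathbb{\hat{E}}[|\Delta\overline{X}_t|^2]\le e^{-\eta t/\varepsilon}|\Delta\bar{x}|^2+\frac{C}{\varepsilon}\int_0^t e^{-\eta(t-s)/\varepsilon}\mathbb{\hat{E}}[|\Delta\widetilde{X}_s|^2]\,ds.
\]

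The second step establishes (i). Lemma \ref{myw901} applied to the slow $G$-SDE together with (H1) gives
\[
\mathbb{\hat{E}}\Big[\sup_{r\le t}|\Delta\widetilde{X}_r|^2\Big]\le C(T)\,|\Delta\tilde{x}|^2+C(T)\int_0^t\bigl(\mathbb{\hat{E}}[|\Delta\widetilde{X}_s|^2]+\mathbb{\hat{E}}[|\Delta\overline{X}_s|^2]\bigr)\,ds.
\]
Substituting the preliminary bound and applying Fubini to the double integral, the $1/\varepsilon$ prefactor is exactly absorbed by integrating the exponential $e^{-\eta\cdot/\varepsilon}$, producing precisely the factor $\varepsilon|\Delta\bar{x}|^2$ displayed in (i); a final Gronwall closes (i). Plugging (i) back into the preliminary estimate for $\mathbb{\hat{E}}[|\Delta\overline{X}_t|^2]$ then yields the clean form of (ii).

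Item (iii) follows by the same scheme applied with the reference configuration $0\in\mathbb{R}^{2n}$ in place of $x'$: since (H1)--(H2) no longer give full cancellation at the reference, one absorbs the affine remainders $|\ell(\tilde{x},0)|\le|\ell(0,0)|+L_1|\tilde{x}|$ by Young's inequality and the same bootstrap produces quadratic growth in $(|\tilde{x}|,|\bar{x}|)$. The main obstacle I anticipate is the first step: assembling the $G$-It\^o output so that $2[\langle\Delta\overline{X},\Delta\overline{h}_{ij}\rangle]_{ij}+\Delta\overline{\sigma}^\top\Delta\overline{\sigma}$ appears as the single symmetric matrix argument of one $G$-term, so that the dissipativity constant $\eta$ is extracted cleanly; once this reorganisation is in place, the bootstrap Gronwall argument is routine.
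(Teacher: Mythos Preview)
Your strategy matches the paper's almost exactly: apply $G$-It\^o to the squared fast difference, freeze the slow argument so that (H2) yields the $-\eta/\varepsilon$ contraction, absorb the slow remainders by Young, integrate the resulting bound against the slow BDG estimate to close Gronwall for (i), and then feed (i) back to get (ii); item (iii) is the same with reference point $0$. The paper likewise uses the pathwise inequality $\int_0^t\xi_s\,d\langle B\rangle_s\le 2\int_0^t G(\xi_s)\,ds$ to package the $\overline{\sigma}^\top\overline{\sigma}$ and $\overline{h}_{ij}$ terms into a single $G$-argument, so your ``main obstacle'' is handled in the same way.

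There is one step that does not follow as written. From the integral inequality
\[
\mathbb{\hat{E}}[|\Delta\overline{X}_t|^2]+\frac{\eta}{\varepsilon}\int_0^t\mathbb{\hat{E}}[|\Delta\overline{X}_s|^2]\,ds\le |\Delta\bar{x}|^2+\frac{C}{\varepsilon}\int_0^t\mathbb{\hat{E}}[|\Delta\widetilde{X}_s|^2]\,ds
\]
one cannot deduce the pointwise exponential bound
\[
\mathbb{\hat{E}}[|\Delta\overline{X}_t|^2]\le e^{-\eta t/\varepsilon}|\Delta\bar{x}|^2+\frac{C}{\varepsilon}\int_0^t e^{-\eta(t-s)/\varepsilon}\mathbb{\hat{E}}[|\Delta\widetilde{X}_s|^2]\,ds.
\]
The integral form only controls $\int_0^t\mathbb{\hat{E}}[|\Delta\overline{X}_s|^2]\,ds$ (which is indeed enough for (i), since dropping the first nonnegative term on the left already gives $\int_0^t\mathbb{\hat{E}}[|\Delta\overline{X}_s|^2]\,ds\le\frac{\varepsilon}{\eta}|\Delta\bar{x}|^2+\frac{C}{\eta}\int_0^t\mathbb{\hat{E}}[|\Delta\widetilde{X}_s|^2]\,ds$), but for (ii) you genuinely need the pointwise exponential kernel to kill the $1/\varepsilon$ when you substitute (i). The fix is immediate and is precisely what the paper does: apply $G$-It\^o to $e^{\eta t/\varepsilon}|\Delta\overline{X}_t|^2$ rather than to $|\Delta\overline{X}_t|^2$; the extra drift term $\frac{\eta}{\varepsilon}e^{\eta s/\varepsilon}|\Delta\overline{X}_s|^2$ then cancels against part of the dissipative contribution, and after taking $\mathbb{\hat{E}}$ and multiplying through by $e^{-\eta t/\varepsilon}$ you obtain the exponential bound directly. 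With that adjustment your proof is complete and coincides with the paper's.
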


\begin{proof}
Without loss of generality, assume that $\widetilde{h}_{ij}=\overline{h}_{ij}=0,i,j=1,\ldots,d.$  The proof is divided into the following two steps.

{\bf Step 1} (Assertions {(i)} and {(ii)}).
Applying $G$-It\^{o}'s formula (Proposition 3.6.3  of \cite{P10}) to $e^{\frac{\eta}{\varepsilon}t}\left|\overline{X}^{\varepsilon,x}_t-\overline{X}^{\varepsilon,x^{\prime}}_t\right|^2$ yields that
\begin{align}\label{myw1102}\begin{split}
&e^{\frac{\eta}{\varepsilon}t}\left|\overline{X}^{\varepsilon,x}_t-\overline{X}^{\varepsilon,x^{\prime}}_t\right|^2-|\bar{x}-\bar{x}^{\prime}|^2-M_t\\
&=\frac{\eta}{\varepsilon}\int^t_0e^{\frac{\eta}{\varepsilon}s}\left|\overline{X}^{\varepsilon,x}_s-\overline{X}^{\varepsilon,x^{\prime}}_s\right|^2ds
+\frac{2}{\varepsilon}\int^t_0e^{\frac{\eta}{\varepsilon}s}\left\langle\overline{X}^{\varepsilon,x}_s-\overline{X}^{\varepsilon,x^{\prime}}_s,\overline{b}(\widetilde{X}^{\varepsilon,x}_s,\overline{X}^{\varepsilon,x}_s)-\overline{b}(\widetilde{X}^{\varepsilon,x^{\prime}}_s,\overline{X}^{\varepsilon,x^{\prime}}_s)\right\rangle ds\\ &\ \ +\frac{1}{\varepsilon}\int^t_0e^{\frac{\eta}{\varepsilon}s}\left(\overline{\sigma}(\widetilde{X}^{\varepsilon,x}_s,\overline{X}^{\varepsilon,x}_s)-\overline{\sigma}(\widetilde{X}^{\varepsilon,x^{\prime}}_s,\overline{X}^{\varepsilon,x^{\prime}}_s)\right)^{\top}\left(\overline{\sigma}(\widetilde{X}^{\varepsilon,x}_s,\overline{X}^{\varepsilon,x}_s)-\overline{\sigma}(\widetilde{X}^{\varepsilon,x^{\prime}}_s,\overline{X}^{\varepsilon,x^{\prime}}_s)\right)d\langle B\rangle_s,
\end{split}
\end{align}
where  $M_t:=\frac{2}{\sqrt{\varepsilon}}\int^t_0e^{\frac{\eta}{\varepsilon}s}\left(\overline{X}^{\varepsilon,x}_s-\overline{X}^{\varepsilon,x^{\prime}}_s\right)\left(\overline{\sigma}(\widetilde{X}^{\varepsilon,x}_s,\overline{X}^{\varepsilon,x}_s)-\overline{\sigma}(\widetilde{X}^{\varepsilon,x^{\prime}}_s,\overline{X}^{\varepsilon,x^{\prime}}_s)\right)dB_s$  is a symmetric $G$-martingale with $M_0=0$, i.e., $\mathbb{\hat{E}}[M_t]=-\mathbb{\hat{E}}[-M_t]=0.$

Recalling assumption (H1), we deduce that
\begin{align*}
&\left\langle\overline{X}^{\varepsilon,x}_s-\overline{X}^{\varepsilon,x^{\prime}}_s,\overline{b}(\widetilde{X}^{\varepsilon,x}_s,\overline{X}^{\varepsilon,x}_s)-\overline{b}(\widetilde{X}^{\varepsilon,x^{\prime}}_s,\overline{X}^{\varepsilon,x^{\prime}}_s)\right\rangle\\
&=
\left\langle\overline{X}^{\varepsilon,x}_s-\overline{X}^{\varepsilon,x^{\prime}}_s,\overline{b}(\widetilde{X}^{\varepsilon,x}_s,\overline{X}^{\varepsilon,x}_s)-\overline{b}(\widetilde{X}^{\varepsilon,x}_s,\overline{X}^{\varepsilon,x^{\prime}}_s)+\overline{b}(\widetilde{X}^{\varepsilon,x}_s,\overline{X}^{\varepsilon,x^{\prime}}_s)-\overline{b}(\widetilde{X}^{\varepsilon,x^{\prime}}_s,\overline{X}^{\varepsilon,x^{\prime}}_s)\right\rangle\\
&\leq \left\langle\overline{X}^{\varepsilon,x}_s-\overline{X}^{\varepsilon,x^{\prime}}_s,\overline{b}(\widetilde{X}^{\varepsilon,x}_s,\overline{X}^{\varepsilon,x}_s)-\overline{b}(\widetilde{X}^{\varepsilon,x}_s,\overline{X}^{\varepsilon,x^{\prime}}_s)\right\rangle+L_1\left|\overline{X}^{\varepsilon,x}_s-\overline{X}^{\varepsilon,x^{\prime}}_s\right|\left|\widetilde{X}^{\varepsilon,x}_s-\widetilde{X}^{\varepsilon,x^{\prime}}_s\right|.
\end{align*}
By a similar analysis, we can obtain that
\begin{align*}
&\left(\overline{\sigma}(\widetilde{X}^{\varepsilon,x}_s,\overline{X}^{\varepsilon,x}_s)-\overline{\sigma}(\widetilde{X}^{\varepsilon,x^{\prime}}_s,\overline{X}^{\varepsilon,x^{\prime}}_s)\right)^{\top}\left(\overline{\sigma}(\widetilde{X}^{\varepsilon,x}_s,\overline{X}^{\varepsilon,x}_s)-\overline{\sigma}(\widetilde{X}^{\varepsilon,x^{\prime}}_s,\overline{X}^{\varepsilon,x^{\prime}}_s)\right)\\
&\leq \left(\overline{\sigma}(\widetilde{X}^{\varepsilon,x}_s,\overline{X}^{\varepsilon,x}_s)-\overline{\sigma}(\widetilde{X}^{\varepsilon,x}_s,\overline{X}^{\varepsilon,x^{\prime}}_s)\right)^{\top}\left(\overline{\sigma}(\widetilde{X}^{\varepsilon,x}_s,\overline{X}^{\varepsilon,x}_s)-\overline{\sigma}(\widetilde{X}^{\varepsilon,x}_s,\overline{X}^{\varepsilon,x^{\prime}}_s)\right)\\&\ \ \ \ \ \ \ \ +C(L_1)\left(\left|\overline{X}^{\varepsilon,x}_s-\overline{X}^{\varepsilon,x^{\prime}}_s\right|\left|\widetilde{X}^{\varepsilon,x}_s-\widetilde{X}^{\varepsilon,x^{\prime}}_s\right|+\left|\widetilde{X}^{\varepsilon,x}_s-\widetilde{X}^{\varepsilon,x^{\prime}}_s\right|^2\right)I_{n}.
\end{align*}
In  view of Corollary 3.5.8 of \cite{P10}, we have that,  for each $\eta\in M^1_G(0,T;\mathbb{S}(d))$,
\[\int^t_0\eta_sd\langle B\rangle_s- 2\int^t_0G(\eta_s)ds\leq 0.\]
 Then, according to inequality \eqref{myw1102}, we get that
\begin{align*}
&e^{\frac{\eta}{\varepsilon}t}\left|\overline{X}^{\varepsilon,x}_t-\overline{X}^{\varepsilon,x^{\prime}}_t\right|^2-|\bar{x}-\bar{x}^{\prime}|^2-M_t\\
&\leq \frac{\eta}{\varepsilon}\int^t_0e^{\frac{\eta}{\varepsilon}s}\left|\overline{X}^{\varepsilon,x}_s-\overline{X}^{\varepsilon,x^{\prime}}_s\right|^2ds
+\frac{2}{\varepsilon}\int^t_0e^{\frac{\eta}{\varepsilon}s}\left\langle\overline{X}^{\varepsilon,x}_s-\overline{X}^{\varepsilon,x^{\prime}}_s,\overline{b}(\widetilde{X}^{\varepsilon,x}_s,\overline{X}^{\varepsilon,x}_s)-\overline{b}(\widetilde{X}^{\varepsilon,x}_s,\overline{X}^{\varepsilon,x^{\prime}}_s)\right\rangle ds\\ &\ \ +\frac{2}{\varepsilon}\int^t_0e^{\frac{\eta}{\varepsilon}s}G\left(\left(\overline{\sigma}(\widetilde{X}^{\varepsilon,x}_s,\overline{X}^{\varepsilon,x}_s)-\overline{\sigma}(\widetilde{X}^{\varepsilon,x}_s,\overline{X}^{\varepsilon,x^{\prime}}_s)\right)^{\top}\left(\overline{\sigma}(\widetilde{X}^{\varepsilon,x}_s,\overline{X}^{\varepsilon,x}_s)-\overline{\sigma}(\widetilde{X}^{\varepsilon,x}_s,\overline{X}^{\varepsilon,x^{\prime}}_s)\right)\right)ds\\& \ \
+\frac{C(L_1)}{\varepsilon}\int^t_0e^{\frac{\eta}{\varepsilon}s}\left(\left|\overline{X}^{\varepsilon,x}_s-\overline{X}^{\varepsilon,x^{\prime}}_s\right|\left|\widetilde{X}^{\varepsilon,x}_s-\widetilde{X}^{\varepsilon,x^{\prime}}_s\right|+\left|\widetilde{X}^{\varepsilon,x}_s-\widetilde{X}^{\varepsilon,x^{\prime}}_s\right|^2\right)ds,
\end{align*}
which together with condition (H2)  and \[
\left|\overline{X}^{\varepsilon,x}_t-\overline{X}^{\varepsilon,x^{\prime}}_t\right|\left|\widetilde{X}^{\varepsilon,x}_t-\widetilde{X}^{\varepsilon,x^{\prime}}_t\right|\leq \frac{\eta}{C(L_1)}\left|\overline{X}^{\varepsilon,x}_t-\overline{X}^{\varepsilon,x^{\prime}}_t\right|^2+\frac{C(L_1)}{4\eta}\left|\widetilde{X}^{\varepsilon,x}_t-\widetilde{X}^{\varepsilon,x^{\prime}}_t\right|^2,
\]
implies that
\begin{align*}
e^{\frac{\eta}{\varepsilon}t}\left|\overline{X}^{\varepsilon,x}_t-\overline{X}^{\varepsilon,x^{\prime}}_t\right|^2\leq
|\bar{x}-\bar{x}^{\prime}|^2+M_t+ \frac{C(L_1,\eta)}{\varepsilon}\int^t_0e^{\frac{\eta}{\varepsilon}s}\left|\widetilde{X}^{\varepsilon,x}_s-\widetilde{X}^{\varepsilon,x^{\prime}}_s\right|^2ds.
\end{align*}
Taking $G$-expectation to both sides, we obtain that for each $t\in[0,T]$,
\begin{align}\label{myw600}
\mathbb{\hat{E}}\left[\left|\overline{X}^{\varepsilon,x}_t-\overline{X}^{\varepsilon,x^{\prime}}_t\right|^2\right]\leq
|\bar{x}-\bar{x}^{\prime}|^2e^{-\frac{\eta}{\varepsilon}t}+ \frac{C(L_1,\eta)}{\varepsilon}\int^t_0e^{\frac{\eta}{\varepsilon}(s-t)}\mathbb{\hat{E}}\left[\left|\widetilde{X}^{\varepsilon,x}_s-\widetilde{X}^{\varepsilon,x^{\prime}}_s\right|^2\right]ds.
\end{align}
In particular, it holds that
\begin{align}\label{myw601}
\int^t_0\mathbb{\hat{E}}\left[\left|\overline{X}^{\varepsilon,x}_s-\overline{X}^{\varepsilon,x^{\prime}}_s\right|^2\right]ds
\leq \frac{\varepsilon}{\eta}|\bar{x}-\bar{x}^{\prime}|^2+C(L_1,\eta)\int^t_0\mathbb{\hat{E}}\left[\sup\limits_{0\leq r\leq s }\left|\widetilde{X}^{\varepsilon,x}_r-\widetilde{X}^{\varepsilon,x^{\prime}}_r\right|^2\right]ds.
\end{align}

On the other hand, applying H\"{o}lder's inequality and BDG's inequality, we conclude that
\begin{align*}
\mathbb{\hat{E}}\left[\sup\limits_{0\leq s\leq t}\left|\widetilde{X}^{\varepsilon,x}_s-\widetilde{X}^{\varepsilon,x^{\prime}}_s\right|^2\right]
\leq& 3|\tilde{x}-\tilde{x}^{\prime}|^2+C(L_1,T)\int^t_0\mathbb{\hat{E}}\left[ \left|\overline{X}^{\varepsilon,x}_s-\overline{X}^{\varepsilon,x^{\prime}}_s\right|^2+ \left|\widetilde{X}^{\varepsilon,x}_s-\widetilde{X}^{\varepsilon,x^{\prime}}_s\right|^2  \right]  ds\\
\leq & C(L_1,\eta,T)\left(|\tilde{x}-\tilde{x}^{\prime}|^2+\varepsilon|\bar{x}-\bar{x}^{\prime}|^2+\int^t_0\mathbb{\hat{E}}\left[\sup\limits_{0\leq r\leq s}\left|\widetilde{X}^{\varepsilon,x}_r-\widetilde{X}^{\varepsilon,x^{\prime}}_r\right|^2\right]ds\right),
\end{align*}
where we have used the estimate \eqref{myw601} in the last inequality.
It follows from Gronwall's inequality that
\begin{align*}
\mathbb{\hat{E}}\left[\sup\limits_{0\leq s\leq T}\left|\widetilde{X}^{\varepsilon,x}_s-\widetilde{X}^{\varepsilon,x^{\prime}}_s\right|^2\right]
\leq C(L_1,\eta,T)\left(|\tilde{x}-\tilde{x}^{\prime}|^2+\varepsilon|\bar{x}-\bar{x}^{\prime}|^2\right).
\end{align*}
With the help of  inequality \eqref{myw600}, we obtain that
\begin{align*}
\mathbb{\hat{E}}\left[\left|\overline{X}^{\varepsilon,x}_t-\overline{X}^{\varepsilon,x^{\prime}}_t\right|^2\right]\leq C(L_1,\eta,T)\left(|\tilde{x}-\tilde{x}^{\prime}|^2+|\bar{x}-\bar{x}^{\prime}|^2\right), \ \forall t\in[0,T],
\end{align*}
which is the desired result.

{\bf Step 2} (Assertion {(iii)}).  Applying $G$-It\^{o}'s formula again, we deduce that
\begin{align}\label{myw605}\begin{split}
&e^{\frac{\eta}{\varepsilon}t}\left|\overline{X}^{\varepsilon,x}_t\right|^2-|\bar{x}|^2-M^{\prime}_t-\frac{\eta}{\varepsilon}\int^t_0e^{\frac{\eta}{\varepsilon}s}\left|\overline{X}^{\varepsilon,x}_s\right|^2ds\\
&\leq
\frac{2}{\varepsilon}\int^t_0e^{\frac{\eta}{\varepsilon}s}\left[\left\langle\overline{X}^{\varepsilon,x}_s,\overline{b}(\widetilde{X}^{\varepsilon,x}_s,\overline{X}^{\varepsilon,x}_s)\right\rangle+G\left(\left(\overline{\sigma}(\widetilde{X}^{\varepsilon,x}_s,\overline{X}^{\varepsilon,x}_s)\right)^{\top}\overline{\sigma}(\widetilde{X}^{\varepsilon,x}_s,\overline{X}^{\varepsilon,x}_s)\right)\right]ds,
\end{split}
\end{align}
where  $M^{\prime}_t$  is a symmetric $G$-martingale with $M^{\prime}_0=0$.

In view of assumptions (H1) and (H2), we get that
\begin{align*}
&2\left\langle\overline{X}^{\varepsilon,x}_s,\overline{b}(\widetilde{X}^{\varepsilon,x}_s,\overline{X}^{\varepsilon,x}_s)\right\rangle+2G\left(\left(\overline{\sigma}(\widetilde{X}^{\varepsilon,x}_s,\overline{X}^{\varepsilon,x}_s)\right)^{\top}\left(\overline{\sigma}(\widetilde{X}^{\varepsilon,x}_s,\overline{X}^{\varepsilon,x}_s)\right)\right)\\
&\leq 2\left\langle\overline{X}^{\varepsilon,x}_s,\overline{b}(\widetilde{X}^{\varepsilon,x}_s,\overline{X}^{\varepsilon,x}_s)-\overline{b}(\widetilde{X}^{\varepsilon,x}_s,0)\right\rangle+C(L_1)\left(1+\left|\overline{X}^{\varepsilon,x}_s\right|+\left|\overline{X}^{\varepsilon,x}_s\right|\left|\widetilde{X}^{\varepsilon,x}_s\right|+\left|\widetilde{X}^{\varepsilon,x}_s\right|^2\right)
\\ &\ \ \ \ \ +2G\left(\left(\overline{\sigma}(\widetilde{X}^{\varepsilon,x}_s,\overline{X}^{\varepsilon,x}_s)-\overline{\sigma}(\widetilde{X}^{\varepsilon,x}_s,0)\right)^{\top}\left(\overline{\sigma}(\widetilde{X}^{\varepsilon,x}_s,\overline{X}^{\varepsilon,x}_s)-\overline{\sigma}(\widetilde{X}^{\varepsilon,x}_s,0)\right)\right)\\
&\leq -\eta\left|\overline{X}^{\varepsilon,x}_s\right|^2+C(L_1,\eta)\left(1+\left|\widetilde{X}^{\varepsilon,x}_s\right|^2\right).
\end{align*}
It follows from inequality \eqref{myw605} that\begin{align}\label{myw606}
\mathbb{\hat{E}}\left[\left|\overline{X}^{\varepsilon,x}_t\right|^2\right]\leq e^{-\frac{\eta}{\varepsilon}t}|\bar{x}|^2+\frac{C(L_1,\eta)}{\varepsilon}\int^t_0e^{\frac{\eta}{\varepsilon}(s-t)}\left(1+\mathbb{\hat{E}}\left[\left|\widetilde{X}^{\varepsilon,x}_s\right|^2\right]\right)ds, \ \forall t\in[0,T].
\end{align}

On the other hand, using inequality \eqref{myw606} and by a similar analysis as step 1, we obtain that
\begin{align*}
\mathbb{\hat{E}}\left[\sup\limits_{0\leq s\leq t}\left|\widetilde{X}^{\varepsilon,x}_s\right|^2\right]
\leq& 3|\tilde{x}|^2+C(L_1,T)\int^t_0\mathbb{\hat{E}}\left[ 1+ \left|\overline{X}^{\varepsilon,x}_s\right|^2+ \left|\widetilde{X}^{\varepsilon,x}_s\right|^2  \right]  ds\\
\leq& C(L_1,\eta,T)\left(1+|\tilde{x}|^2+\varepsilon|\bar{x}|^2+\int^t_0\mathbb{\hat{E}}\left[ \sup\limits_{0\leq r\leq s}\left|\widetilde{X}^{\varepsilon,x}_r\right|^2 \right] ds\right).
\end{align*}
Consequently, it holds that
\[
\mathbb{\hat{E}}\left[\sup\limits_{0\leq s\leq T}\left|\widetilde{X}^{\varepsilon,x}_s\right|^2\right]+\sup\limits_{0\leq t\leq T}\mathbb{\hat{E}}\left[\left|\overline{X}^{\varepsilon,x}_t\right|^2\right]\leq  C(L_1,\eta,T)\left(1+|\tilde{x}|^2+|\bar{x}|^2\right).
\]
The proof is complete.
\end{proof}

Then, we have the following asymptotic properties of $u^{\varepsilon}$.

\begin{lemma}\label{myw608} Let $\varphi$ be in $C_{b.lip}(\mathbb{R}^n)$.
Suppose  assumptions \emph{(H1)} and \emph{(H2)} are satisfied.
Then, there exist a sequence $\varepsilon_m\downarrow 0$, $m\geq 1$  and a function $\widetilde{u}^*\in C_{b}([0,T]\times\mathbb{R}^n)$, such that
 for each $s,t\in[0,T]$ and $ x=(\tilde{x},\bar{x})\in\mathbb{R}^{2n}$,
\begin{description}
 \item[(i)]
$
\lim\limits_{m\rightarrow\infty}u^{\varepsilon_m}(t,\tilde{x},\bar{x})=\widetilde{u}^*(t,\tilde{x}),
$
 \item[(ii)] $\lim\limits_{m\rightarrow\infty}\mathbb{\hat{E}}\left[\left|u^{\varepsilon_m}(s,\widetilde{X}^{\varepsilon_m,x}_{t},\overline{X}^{\varepsilon_m,x}_{t})-\widetilde{u}^*(s,\widetilde{X}^{\varepsilon_m,x}_{t})\right|\right]=0$.
\end{description}
\end{lemma}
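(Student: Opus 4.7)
The strategy is to apply the Arzel\`a--Ascoli theorem to the family $\{u^\varepsilon(\cdot,\cdot,\bar{x}_0)\}_{0<\varepsilon<1}$ for an arbitrary fixed reference point $\bar{x}_0\in\mathbb{R}^n$, and then use the vanishing modulus in $\bar{x}$ (a free bonus from Lemma \ref{myw603}(i)) to pass to arbitrary $\bar{x}$.

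First I would collect a priori estimates on $u^\varepsilon$. Since $\varphi\in C_{b.lip}(\mathbb{R}^n)$ with Lipschitz constant $L_\varphi$, the representation $u^\varepsilon(t,\tilde{x},\bar{x})=\mathbb{\hat{E}}[\varphi(\widetilde{X}^{\varepsilon,x}_t)]$ combined with Cauchy--Schwarz and Lemma \ref{myw603}(i) yields the spatial modulus
\[
|u^\varepsilon(t,\tilde{x},\bar{x})-u^\varepsilon(t,\tilde{x}',\bar{x}')|\leq C(L_1,\eta,T,L_\varphi)\bigl(|\tilde{x}-\tilde{x}'|+\sqrt{\varepsilon}\,|\bar{x}-\bar{x}'|\bigr).
\]
For time regularity I would write the increment $\widetilde{X}^{\varepsilon,x}_t-\widetilde{X}^{\varepsilon,x}_s$ directly from the slow SDE, then apply Lemma \ref{myw901} and the moment bound of Lemma \ref{myw603}(iii) to obtain, uniformly in $\varepsilon$,
\[
|u^\varepsilon(t,\tilde{x},\bar{x})-u^\varepsilon(s,\tilde{x},\bar{x})|\leq C(L_1,\eta,T,L_\varphi)(1+|\tilde{x}|+|\bar{x}|)\sqrt{|t-s|}.
\]
Together with the trivial bound $\|u^\varepsilon\|_\infty\leq\|\varphi\|_\infty$, this gives uniform boundedness and equicontinuity of $\{u^\varepsilon(\cdot,\cdot,\bar{x}_0)\}$ on every compact subset of $[0,T]\times\mathbb{R}^n$.

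Arzel\`a--Ascoli plus a diagonal extraction then delivers a sequence $\varepsilon_m\downarrow 0$ and a function $\widetilde{u}^*\in C_b([0,T]\times\mathbb{R}^n)$ such that $u^{\varepsilon_m}(\cdot,\cdot,\bar{x}_0)\to\widetilde{u}^*$ locally uniformly. For any other $\bar{x}$, the $\sqrt{\varepsilon}$ factor in the spatial modulus gives
\[
|u^{\varepsilon_m}(t,\tilde{x},\bar{x})-u^{\varepsilon_m}(t,\tilde{x},\bar{x}_0)|\leq C\sqrt{\varepsilon_m}\,|\bar{x}-\bar{x}_0|\longrightarrow 0,
\]
so (i) holds for every $\bar{x}$. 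For (ii) I would insert $u^{\varepsilon_m}(s,\widetilde{X}^{\varepsilon_m,x}_t,\bar{x}_0)$ and split into two pieces. The first piece is dominated by $C\sqrt{\varepsilon_m}\,\mathbb{\hat{E}}[|\overline{X}^{\varepsilon_m,x}_t-\bar{x}_0|]$, which vanishes thanks to Lemma \ref{myw603}(iii). For the second piece, cut $\mathbb{R}^n$ at a ball of radius $R$: inside the ball the integrand is dominated by $\sup_{|\tilde{x}|\leq R}|u^{\varepsilon_m}(s,\tilde{x},\bar{x}_0)-\widetilde{u}^*(s,\tilde{x})|$, which tends to zero by local uniform convergence; outside the ball the integrand is at most $2\|\varphi\|_\infty$ while the $G$-capacity of $\{|\widetilde{X}^{\varepsilon_m,x}_t|>R\}$ is at most $C/R^2$ by Chebyshev combined with Lemma \ref{myw603}(iii). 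Letting $m\to\infty$ and then $R\to\infty$ finishes (ii).

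The main obstacle is securing the uniform-in-$\varepsilon$ time modulus in the presence of the $1/\varepsilon$ and $1/\sqrt{\varepsilon}$ scaling in the fast SDE. The key observation is that only the slow SDE enters the time increment of $\widetilde{X}^{\varepsilon,x}$, whose coefficients are $\varepsilon$-independent, and the $\overline{X}^{\varepsilon,x}$-dependence inside those coefficients is tamed through the uniform second-moment bound Lemma \ref{myw603}(iii) --- which is exactly where the dissipativity assumption (H2) pays off.
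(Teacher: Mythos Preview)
Your proposal is correct and follows essentially the same route as the paper: derive a uniform-in-$\varepsilon$ modulus of continuity for $u^\varepsilon$ from Lemma~\ref{myw603} (spatial part from (i), time part from the slow SDE combined with the moment bound (iii)), invoke Arzel\`a--Ascoli plus a diagonal extraction, and then prove (ii) by a truncation argument using (iii) again. The only cosmetic difference is that you freeze a reference point $\bar{x}_0$ and apply Arzel\`a--Ascoli to functions of $(t,\tilde{x})$, whereas the paper works directly on compacts of $[0,T]\times\mathbb{R}^{2n}$ and observes afterwards that the limit is $\bar{x}$-independent; likewise, for (ii) you insert the intermediate term $u^{\varepsilon_m}(s,\widetilde{X}^{\varepsilon_m,x}_t,\bar{x}_0)$ while the paper truncates simultaneously on $\{|\widetilde{X}^{\varepsilon_m,x}_t|\le N\}\cap\{|\overline{X}^{\varepsilon_m,x}_t|\le N\}$---both decompositions give the same conclusion.
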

\begin{proof}
From Lemma \ref{myw603}, we have that, for any  $s,t\in[0,T]$ and $ x,x^{\prime}\in\mathbb{R}^{2n}$,
\begin{align*}
\mathbb{\hat{E}}\left[\left|\widetilde{X}^{\varepsilon,x}_t-\widetilde{X}^{\varepsilon,x^{\prime}}_s\right|^2\right]
\leq& 2\mathbb{\hat{E}}\left[\left|\widetilde{X}^{\varepsilon,x}_t-\widetilde{X}^{\varepsilon,x}_s\right|^2\right]+2\mathbb{\hat{E}}\left[\left|\widetilde{X}^{\varepsilon,x}_s-\widetilde{X}^{\varepsilon,x^{\prime}}_s\right|^2\right]\\
\leq& C(L_1,\eta,T)\left((1+|x|^2)|t-s|+|\tilde{x}-\tilde{x}^{\prime}|^2+\varepsilon|\bar{x}-\bar{x}^{\prime}|^2\right).
\end{align*}
It follows from the definition of $u^{\varepsilon}$ that $|u^{\varepsilon}(t,\tilde{x},\bar{x})|\leq C(\varphi)$ and
\begin{align*}
|u^{\varepsilon}(t,\tilde{x},\bar{x})-u^{\varepsilon}(s,\tilde{x}^{\prime},\bar{x}^{\prime})|&\leq \mathbb{\hat{E}}\left[\left|\varphi(\widetilde{X}^{\varepsilon,x}_t)-\varphi(\widetilde{X}^{\varepsilon,x^{\prime}}_s)\right|\right]\leq C(\varphi)\mathbb{\hat{E}}\left[\left|\widetilde{X}^{\varepsilon,x}_t-\widetilde{X}^{\varepsilon,x^{\prime}}_s\right|\right] \\&
\leq C(L_1,\eta,T,\varphi)\left((1+|x|)\sqrt{|t-s|}+|\tilde{x}-\tilde{x}^{\prime}|+\sqrt{\varepsilon}|\bar{x}-\bar{x}^{\prime}|\right).
\end{align*}
Thus, by the Arzel\`{a}-Ascoli theorem, we can find a  sequence $\varepsilon_m\downarrow 0$, such that
$u^{\varepsilon_m}(t,\tilde{x},\bar{x})$ is a Cauchy sequence for any   $(t,\tilde{x},\bar{x})\in[0,T]\times\mathbb{R}^{2n}$.
Denote
$
\widetilde{u}^*(t,\tilde{x},\bar{x}):=\lim\limits_{m\rightarrow\infty}u^{\varepsilon_m}(t,\tilde{x},\bar{x}).
$
It is obvious that
\[
|\widetilde{u}^*(t,\tilde{x},\bar{x})-\widetilde{u}^*(s,\tilde{x}^{\prime},\bar{x}^{\prime})|\leq C(L_1,\eta,T,\varphi)\left((1+|x|)\sqrt{|t-s|}+|\tilde{x}-\tilde{x}^{\prime}|\right), \ \forall \bar{x},\bar{x}^{\prime}\in\mathbb{R}^n,
\]
which indicates that $\widetilde{u}^*$ is independent of the argument $\bar{x}$.

Next, we will prove Assertion (ii). For each $N>0$, we get that
\begin{align*}
&\mathbb{\hat{E}}\left[\left|u^{\varepsilon_m}(s,\widetilde{X}^{\varepsilon_m,x}_{t},\overline{X}^{\varepsilon_m,x}_{t})-\widetilde{u}^*(s,\widetilde{X}^{\varepsilon_m,x}_{t})\right|\right]\\
&\leq \mathbb{\hat{E}}\left[\left|u^{\varepsilon_m}({s},\widetilde{X}^{\varepsilon_m,x}_{t},\overline{X}^{\varepsilon_m,x}_{t})-\widetilde{u}^*({s},\widetilde{X}^{\varepsilon_m,x}_{t})\right|I_{\left\{\left|\widetilde{X}^{\varepsilon_m,x}_{t}\right|\leq N\right\}}I_{\left\{\left|\overline{X}^{\varepsilon_m,x}_{t}\right|\leq N\right\}}\right] \\
&\ \ \ \ \ \ \ \ \ + C(\varphi)\mathbb{\hat{E}}\left[I_{\left\{\left|\widetilde{X}^{\varepsilon_m,x}_{t}\right|\geq N\right\}}+I_{\left\{\left|\overline{X}^{\varepsilon_m,x}_{t}\right|\geq N\right\}}\right]\\
&\leq \sup\limits_{s\in[0,T],|\tilde{x}|,|\bar{x}|\leq N}|u^{\varepsilon_m}(s,\tilde{x},\bar{x})-\widetilde{u}^*(s,\tilde{x})|+\frac{C(\varphi)}{N}\mathbb{\hat{E}}\left[\left|\widetilde{X}^{\varepsilon_m,x}_{t}\right|+\left|\overline{X}^{\varepsilon_m,x}_{t}\right|\right].
\end{align*}
Note that $u^{\varepsilon_m}$ converges  uniformly to $\widetilde{u}^*$ on every compact subset of $[0,T]\times\mathbb{R}^{2n}.$
Thus, with the help Assertion (iii) of Lemma \ref{myw603}, we conclude that
\[
\limsup\limits_{m\rightarrow \infty}\mathbb{\hat{E}}\left[\left|u^{\varepsilon_m}(s,\widetilde{X}^{\varepsilon_m,x}_{t},\overline{X}^{\varepsilon_m,x}_{t})-\widetilde{u}^*(s,\widetilde{X}^{\varepsilon_m,x}_{t})\right|\right]\leq \frac{ C(L_1,\eta,T,\varphi)}{N}(1+|\tilde{x}|+|\bar{x}|), \ \forall N>0.
\]
Sending $N\rightarrow\infty$ yields the desired result.
\end{proof}

Next, we  show that the function $\widetilde{u}^*$ constructed  above is the viscosity solution to PDE \eqref{PDE511}. For this purpose, we need the following two lemmas.
\begin{lemma}\label{myw6101}
Suppose \emph{(H1)} and \emph{(H3)} hold. Then, for any $p\geq 2$ and $t,s\in[0,T]$,
\begin{align*}&\mathbb{\hat{E}}\left[\sup\limits_{0\leq s\leq T}\left|\widetilde{X}^{\varepsilon,x}_s\right|^p\right]\leq  C(L_1,L_2,p,T)\left(1+|\tilde{x}|^p\right), \\
& \mathbb{\hat{E}}\left[\left|\widetilde{X}^{\varepsilon,x}_t-\widetilde{X}^{\varepsilon,x}_s\right|^p\right]\leq  C(L_1,L_2,p,T)\left(1+|\tilde{x}|^p\right)|t-s|^{\frac{p}{2}}.\end{align*}
\end{lemma}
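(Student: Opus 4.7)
}
The crucial observation is that assumption (H3) forces the coefficients of the slow equation to be controlled by $1+|\tilde{x}|$ alone, independently of the fast variable $\bar{x}$. This decoupling lets us close a Gronwall estimate on the slow component without ever touching $\overline{X}^{\varepsilon,x}$, which is what makes the bounds $\varepsilon$-independent.

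The plan is as follows. First I would apply the $G$-It\^o formula to $|\widetilde{X}^{\varepsilon,x}_\cdot|^p$ or, more simply, take the $p$-th power and the $\sup$ on each piece of the slow equation
\[
\widetilde{X}^{\varepsilon,x}_t=\tilde{x}+\int_0^t\widetilde{b}(\widetilde{X}^{\varepsilon,x}_s,\overline{X}^{\varepsilon,x}_s)ds+\sum_{i,j}\int_0^t\widetilde{h}_{ij}(\widetilde{X}^{\varepsilon,x}_s,\overline{X}^{\varepsilon,x}_s)d\langle B^i,B^j\rangle_s+\int_0^t\widetilde{\sigma}(\widetilde{X}^{\varepsilon,x}_s,\overline{X}^{\varepsilon,x}_s)dB_s.
\]
By H\"older's inequality on the Lebesgue and quadratic-variation integrals, Lemma \ref{myw901} (the BDG inequality) on the $dB$ integral, and assumption (H3), I get
\[
\mathbb{\hat{E}}\Bigl[\sup_{0\le s\le t}|\widetilde{X}^{\varepsilon,x}_s|^p\Bigr]\le C(p,T)\Bigl(|\tilde{x}|^p+L_2^p\int_0^t\mathbb{\hat{E}}\Bigl[\sup_{0\le r\le s}(1+|\widetilde{X}^{\varepsilon,x}_r|)^p\Bigr]ds\Bigr),
\]
where the $\overline{X}^{\varepsilon,x}$-dependence has already disappeared thanks to (H3). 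Gronwall's lemma then yields the first assertion.

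For the second assertion, I would bound each of the three increments $\int_s^t\widetilde{b}\,dr$, $\int_s^t\widetilde{h}_{ij}\,d\langle B\rangle_r$, $\int_s^t\widetilde{\sigma}\,dB_r$ separately. The first two are controlled by H\"older (using that $d\langle B\rangle_r\le\overline{\sigma}^2\,dr$), giving
\[
\mathbb{\hat{E}}\Bigl[\Bigl|\int_s^t\widetilde{b}\,dr\Bigr|^p+\Bigl|\int_s^t\widetilde{h}_{ij}\,d\langle B\rangle_r\Bigr|^p\Bigr]\le C(p,T)(t-s)^{p-1}\mathbb{\hat{E}}\Bigl[\int_s^t(1+|\widetilde{X}^{\varepsilon,x}_r|)^p dr\Bigr],
\]
and the third is controlled by Lemma \ref{myw901} to give
\[
\mathbb{\hat{E}}\Bigl[\Bigl|\int_s^t\widetilde{\sigma}\,dB_r\Bigr|^p\Bigr]\le C(p)(t-s)^{p/2-1}\mathbb{\hat{E}}\Bigl[\int_s^t(1+|\widetilde{X}^{\varepsilon,x}_r|)^p dr\Bigr].
\]
Since $(t-s)^{p-1}\le T^{p/2-1}(t-s)^{p/2}$ for $p\ge 2$ and $t-s\le T$, all three contributions produce the desired $(t-s)^{p/2}$ rate once we insert the uniform moment bound from the first assertion.

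The only mildly delicate point is ensuring that each estimate is uniform in $\varepsilon$; this is automatic here because $\varepsilon$ enters only through the fast equation, which never appears in the bounds thanks to (H3). There is essentially no major obstacle: the proof is standard SDE a priori estimation, adapted to the $G$-setting by using Lemma \ref{myw901} and the $G$-Itô calculus; the non-trivial ingredient is conceptual, namely recognizing that (H3) is exactly what is needed to avoid any dependence on $|\bar{x}|$ and thereby keep the constants uniform in $\varepsilon$.
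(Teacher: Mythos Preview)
Your proposal is correct and matches the paper's own proof, which simply states that the result ``is immediate from BDG's inequality and Gronwall's inequality.'' You have supplied precisely the intended details: use (H3) to bound the slow coefficients by $L_2(1+|\widetilde{X}^{\varepsilon,x}|)$ independently of the fast variable, apply Lemma~\ref{myw901} and H\"older, and close with Gronwall; the increment estimate then follows from the first bound by the same tools.
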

\begin{proof}
The proof is immediate from BDG's inequality and Gronwall's inequality.
\end{proof}
\begin{lemma}\label{myw619}Suppose \emph{(H1)} holds. Then,
for each   $\rho\in L^1_G(\Omega)$ and  element $\Gamma\in \mathbb{S}(n)$, it holds that
\begin{align*}
\mathbb{\hat{E}}\left[\rho+\left\langle \Gamma\int^{t}_0\widetilde{\sigma}({X}^{\varepsilon,x}_s)dB_s,\int^{t}_0\widetilde{\sigma}({X}^{\varepsilon,x}_s)dB_s\right\rangle-\int^{t}_0\widetilde{\sigma}^{\Gamma}({X}^{\varepsilon,x}_s)d\langle B\rangle_s\right]=\mathbb{\hat{E}}\left[\rho\right], \ \forall t>0,
\end{align*}
where $\widetilde{\sigma}^{\Gamma}$ is given by Lemma \emph{\ref{myw123}}.
\end{lemma}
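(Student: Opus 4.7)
The plan is a direct application of $G$-It\^{o}'s formula followed by the symmetric $G$-martingale property of $G$-stochastic integrals. Set
\[
Y_t := \int_0^t \widetilde{\sigma}(X_s^{\varepsilon,x})\,dB_s,
\]
an $\mathbb{R}^n$-valued process, and apply $G$-It\^{o}'s formula (Proposition 6.3 in Chap.\ III of \cite{P10}) to the smooth quadratic function $f(y)=\langle \Gamma y,y\rangle$, whose gradient is $2\Gamma y$ and whose Hessian is $2\Gamma$. Since
\[
d\langle Y^i,Y^j\rangle_t=\sum_{k,l=1}^{d}\widetilde{\sigma}_{ik}(X_t^{\varepsilon,x})\widetilde{\sigma}_{jl}(X_t^{\varepsilon,x})\,d\langle B^k,B^l\rangle_t,
\]
summation against $\Gamma_{ij}$ collapses the It\^{o} correction to $\widetilde{\sigma}^{\Gamma}(X_t^{\varepsilon,x})\,d\langle B\rangle_t$, yielding
\[
N_t := \langle \Gamma Y_t,Y_t\rangle - \int_0^t \widetilde{\sigma}^{\Gamma}(X_s^{\varepsilon,x})\,d\langle B\rangle_s = 2\int_0^t \langle \Gamma Y_s,\widetilde{\sigma}(X_s^{\varepsilon,x})\,dB_s\rangle.
\]

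The right-hand side is a $G$-It\^{o} integral, hence a \emph{symmetric} $G$-martingale satisfying $\hat{\mathbb{E}}[N_t]=\hat{\mathbb{E}}[-N_t]=0$. Combined with the sublinearity of $\hat{\mathbb{E}}$, for any $\rho\in L^1_G(\Omega)$ one has
\[
\hat{\mathbb{E}}[\rho+N_t]\leq \hat{\mathbb{E}}[\rho]+\hat{\mathbb{E}}[N_t]=\hat{\mathbb{E}}[\rho],
\]
\[
\hat{\mathbb{E}}[\rho]=\hat{\mathbb{E}}[(\rho+N_t)+(-N_t)]\leq \hat{\mathbb{E}}[\rho+N_t]+\hat{\mathbb{E}}[-N_t]=\hat{\mathbb{E}}[\rho+N_t],
\]
which gives the claimed equality.

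The only technical issue I anticipate is integrability: one must justify that $\Gamma Y_\cdot\,\widetilde{\sigma}(X_\cdot^{\varepsilon,x})$ belongs to $M^{2}_G(0,t;\mathbb{R}^d)$, so that $N_t$ is a bona fide symmetric $G$-martingale with vanishing two-sided $\hat{\mathbb{E}}$. Under (H1), $\widetilde{\sigma}$ is Lipschitz with at most linear growth; the well-posedness of \eqref{myw201} in $M^2_G$ together with the BDG-type bound of Lemma \ref{myw901} supply sufficient moments for both $Y_\cdot$ and $\widetilde{\sigma}(X_\cdot^{\varepsilon,x})$, and a standard approximation of the product by step processes in the $M^2_G$-norm then closes the gap. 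No further conceptual obstacle is expected.
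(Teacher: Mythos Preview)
Your proposal is correct and follows essentially the same route as the paper: apply $G$-It\^{o}'s formula to identify the quadratic-minus-quadratic-variation term as a $G$-stochastic integral (hence a symmetric $G$-martingale), then invoke the fact that adding a symmetric $G$-martingale to $\rho$ leaves $\hat{\mathbb{E}}[\rho]$ unchanged. The paper's version is terser---it cites Proposition 3.6 in Chap.~I of \cite{P10} for the no-mean-uncertainty step rather than spelling out the two-sided sublinearity inequality---but the argument is the same.
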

\begin{proof}
Suppose that $\left\{M_{t}\right\}_{t\geq 0}$ is a symmetric $G$-martingale,
 i.e., $-\mathbb{\hat{E}}\left[-M_{t}\right]=\mathbb{\hat{E}}\left[M_{t}\right]$.
Then, using the property of  $G$-expectation (Proposition 1.3.7  of \cite{P10}),
we get that
\begin{align}\label{myw616}
\mathbb{\hat{E}}\left[\rho+M_{t}\right]=\mathbb{\hat{E}}\left[\rho\right].
\end{align}

On the other hand, applying $G$-It\^{o}'s formula and recalling the definition of $\widetilde{\sigma}^{\Gamma}$, we can get that,
\[
\left\langle \Gamma\int^{t}_0\widetilde{\sigma}({X}^{\varepsilon,x}_s)dB_s,\int^{t}_0\widetilde{\sigma}({X}^{\varepsilon,x}_s)dB_s\right\rangle-\int^{t}_0\widetilde{\sigma}^{\Gamma}({X}^{\varepsilon,x}_s)d\langle B\rangle_s
\]
is  a symmetric $G$-martingale. It follows from equation \eqref{myw616} that the desired result holds.
\end{proof}

\begin{lemma}\label{myw611} Assume all the conditions of Lemma \emph{\ref{myw608}} are satisfied. Furthermore, suppose assumption \emph{(H3)} holds.
Then,  $\widetilde{u}^*$ is the unique viscosity solution to PDE \eqref{PDE511}.
\end{lemma}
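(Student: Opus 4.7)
The plan is to prove separately that $\widetilde{u}^*$ is a viscosity subsolution and a supersolution of \eqref{PDE511}, then conclude uniqueness from a comparison principle tailored to the averaged generator $\widetilde{G}$. The main mechanisms are: the dynamic programming principle \eqref{myw607} for $u^{\varepsilon_m}$; the convergence statements of Lemma \ref{myw608}; $G$-It\^{o}'s formula together with Lemma \ref{myw619} to discard the stochastic-integral part inside $\hat{\mathbb{E}}$; and a Khasminskii-style freezing of the slow variable combined with a time-change that reveals the ergodic definition of $\widetilde{G}$ in Lemma \ref{myw123}.

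For the subsolution property, I would take $\psi\in C^{1,2}([0,T]\times\mathbb{R}^n)$ such that $\widetilde{u}^*-\psi$ attains a strict local maximum equal to $0$ at an interior point $(t_0,\tilde{x}_0)$. Applying \eqref{myw607} at $(t_0,\tilde{x}_0,\bar{x})$ and passing to the limit using Lemma \ref{myw608}(ii) yields the ``limit DPP''
\[
\widetilde{u}^*(t_0,\tilde{x}_0)=\lim_{m\to\infty}\hat{\mathbb{E}}\big[\widetilde{u}^*(t_0-\delta,\widetilde{X}^{\varepsilon_m,x_0}_\delta)\big].
\]
The local inequality $\widetilde{u}^*\leq \psi$ in a neighbourhood of $(t_0,\tilde{x}_0)$, combined with the uniform moment estimates of Lemma \ref{myw603}(iii) and Lemma \ref{myw6101} (to control the contribution from excursions away from $\tilde{x}_0$ via Markov/Chebyshev), gives
\[
\psi(t_0,\tilde{x}_0)\leq \limsup_{m\to\infty}\hat{\mathbb{E}}\big[\psi(t_0-\delta,\widetilde{X}^{\varepsilon_m,x_0}_\delta)\big]+o_\delta(1).
\]

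I would then apply $G$-It\^{o}'s formula to $\psi(t_0-s,\widetilde{X}^{\varepsilon_m,x_0}_s)$ on $[0,\delta]$ and take $\hat{\mathbb{E}}$. By Lemma \ref{myw619}, the pure $dB$ part behaves as a symmetric $G$-martingale and may be absorbed into the $d\langle B\rangle$ integral via the identity $\widetilde{\sigma}^{D^2\psi}=\widetilde{\sigma}^{\top}D^2\psi\,\widetilde{\sigma}$. This leaves
\begin{align*}
\hat{\mathbb{E}}\big[\psi(t_0-\delta,\widetilde{X}^{\varepsilon_m,x_0}_\delta)\big]-\psi(t_0,\tilde{x}_0)
=\hat{\mathbb{E}}\Big[&\int_0^\delta\big(-\partial_t\psi+\langle D\psi,\widetilde{b}\rangle\big)ds\\
&+\sum_{i,j=1}^d\int_0^\delta\big(\langle D\psi,\widetilde{h}_{ij}\rangle+\tfrac{1}{2}\widetilde{\sigma}^{D^2\psi}_{ij}\big)d\langle B^i,B^j\rangle_s\Big],
\end{align*}
where the integrand is evaluated at $(t_0-s,\widetilde{X}^{\varepsilon_m,x_0}_s,\overline{X}^{\varepsilon_m,x_0}_s)$. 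I freeze the slow trajectory by $\widetilde{X}^{\varepsilon_m,x_0}_s\approx \tilde{x}_0$ and $(\partial_t\psi,D\psi,D^2\psi)(t_0-s,\widetilde{X}^{\varepsilon_m,x_0}_s)\approx$ their values at $(t_0,\tilde{x}_0)$, the errors being controlled by Lemma \ref{myw603}(i) and the Lipschitz bound \eqref{myw122}. Then the time rescaling $s=\varepsilon_m r$ (with $\tilde{B}^i_r:=\frac{1}{\sqrt{\varepsilon_m}}B^i_{\varepsilon_m r}$, itself a $G$-Brownian motion) maps $\overline{X}^{\varepsilon_m,x_0}_{\varepsilon_m r}$ (with the slow variable held at $\tilde{x}_0$) exactly onto the auxiliary $G$-diffusion $\overline{X}^{(\tilde{x}_0,\bar{x})}_r$ of \eqref{myw501}. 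The right-hand side therefore becomes precisely the object whose $1/t$ limit defines $\widetilde{G}(\tilde{x}_0,D\psi(t_0,\tilde{x}_0),D^2\psi(t_0,\tilde{x}_0))$, with the quantitative rate \eqref{myw125} from Lemma \ref{myw123} ensuring a residual of order $\varepsilon_m/\delta\cdot(1+|x|^2)$. Sending $m\to\infty$ with $\delta/\varepsilon_m\to\infty$, dividing by $\delta$, and finally letting $\delta\downarrow 0$ produces
\[
0\leq -\partial_t\psi(t_0,\tilde{x}_0)+\widetilde{G}\big(\tilde{x}_0,D\psi(t_0,\tilde{x}_0),D^2\psi(t_0,\tilde{x}_0)\big),
\]
which is the subsolution inequality. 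The supersolution property is obtained by the symmetric argument at a strict local minimum.

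For uniqueness I would invoke the standard Crandall--Ishii--Lions comparison theorem for viscosity sub-/supersolutions of polynomial growth: Lemma \ref{myw502} provides the monotonicity, sublinearity and the quantitative continuity modulus of $\widetilde{G}$ needed to run the doubling-of-variables argument, and the $\varphi\in C(\mathbb{R}^n)$ polynomial-growth initial datum together with the $(1+|\tilde{x}|^2+|\tilde{x}'|^2)$ factor in Lemma \ref{myw502}(iv) are handled by the usual exponential weight in time and quadratic penalisation. The step I expect to be genuinely delicate is the freezing/time-change: quantifying the error when the slow trajectory $\widetilde{X}^{\varepsilon_m,x_0}_s$ is replaced by $\tilde{x}_0$ over $[0,\delta]$ requires balancing the three scales $\varepsilon_m\ll\delta\ll 1$ so that the ergodic rate in \eqref{myw125} beats the accumulated freezing error coming from Lemma \ref{myw603}(i), and also showing that the localisation to a neighbourhood of $(t_0,\tilde{x}_0)$ leaks only negligibly thanks to the uniform-in-$\varepsilon$ moment control of Lemma \ref{myw603}(iii).
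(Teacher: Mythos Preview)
Your overall architecture matches the paper's: limit dynamic programming via Lemma~\ref{myw608}, expansion of $\psi$ on $[0,\delta]$, removal of the $dB$-part via Lemma~\ref{myw619}, and a comparison principle (the paper supplies this as Lemma~\ref{myw507} in Appendix~B, essentially the weighted doubling-of-variables you sketch). The paper uses a third-order Taylor expansion with $\psi\in C^3_b$ rather than It\^o's formula, but that difference is cosmetic.

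The substantive gap is in your freezing/time-change step. The distributional identity ``$\overline{X}^{\varepsilon_m,x_0}_{\varepsilon_m r}$ with the slow variable held at $\tilde{x}_0$ equals $\overline{X}^{(\tilde{x}_0,\bar{x})}_r$'' holds only for the \emph{auxiliary} fast process whose coefficients already have $\tilde{x}_0$ frozen in; it is \emph{false} for the coupled process $\overline{X}^{\varepsilon_m,x_0}$, whose drift and diffusion are driven by the moving $\widetilde{X}^{\varepsilon_m,x_0}_s$. So before any time change you must replace the coupled fast process by a decoupled one and bound the error. Lemma~\ref{myw603}(i), which you invoke, does not do this: it compares two \emph{coupled} systems with different initial data, not a coupled process with a decoupled surrogate. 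A dissipativity argument in the spirit of the proof of Lemma~\ref{myw603} could in principle produce such an estimate, but you have not supplied it, and ``balancing three scales'' is not a substitute.

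The paper closes this gap by Khasminskii's discretization (its Steps~3--4 together with Lemma~\ref{myw202}). An intermediate scale $\delta_m=\varepsilon_m\sqrt[4]{\ln\varepsilon_m^{-1}}$ is introduced, $[0,\delta]$ is partitioned into pieces of length $\delta_m$, and on each piece the slow variable is frozen at its left-endpoint value. A crude Gronwall on each short piece (inequality~\eqref{myw610}) bounds the replacement error by a quantity $\rho_m\to 0$; the precise choice of $\delta_m$ is exactly what keeps the Gronwall exponential bounded while still forcing $\delta_m/\varepsilon_m\to\infty$ so that the ergodic estimate \eqref{myw125} bites. On each piece the Markov property and the time change (Lemma~\ref{myw202}) then yield $\widetilde{G}(\widetilde{X}^{\varepsilon_m,x}_{l\delta_m},p,A)\,\delta_m$ conditionally. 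A second point you should not underestimate in the $G$-framework: because $\hat{\mathbb{E}}$ is only sublinear, summing the $N^m+1$ pieces cannot be done by additivity; the paper peels them off one at a time via conditional expectations and the triangle inequality (the displayed chain at the end of Step~4). Your single-freeze approach would sidestep this last subtlety, but only once you actually prove the missing comparison between the coupled and decoupled fast processes.
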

\begin{proof} The uniqueness can be obtained by   applying Lemma \ref{myw507} in appendix B.
It suffices to prove that  $\widetilde{u}^*$ is a viscosity subsolution, since the other case can be proved in a similar fashion.
Without loss of generality, assume that $\widetilde{h}_{ij},\overline{h}_{ij}=0,$ $i,j=1,\ldots,d$.

Note that $\widetilde{u}^*$ is a bounded function.  Then, assume that the test function $\psi\in C^{3}_b([0,T]\times\mathbb{R}^n)$ satisfies that $\psi\geq \widetilde{u}^* $ and $\psi(t,\tilde{x})=\widetilde{u}^*(t,\tilde{x})$
for some point $(t,\tilde{x})\in(0,T)\times\mathbb{R}^n$, where $C^{3}_b([0,T]\times\mathbb{R}^n)$
is the space of  the bounded real-valued functions   that are continuously
differentiable up to the third order and whose derivatives of order from $1$ to $3$ are bounded. We need to prove that
\begin{align}\label{myw612}
H(t,\tilde{x},\psi):=\partial_t\psi(t,\tilde{x})-\widetilde{G}(\tilde{x},D\psi(t,\tilde{x}),D^2\psi(t,\tilde{x}))\leq 0.
\end{align}
The proof is divided into the following four steps.

{\bf Step 1} (Dynamic programming principle). Use the same notations as Lemma \ref{myw608}.
Recalling equation \eqref{myw607}, we obtain that, for each $\delta\in (0,1)$ and for any $\bar{x}\in\mathbb{R}^n$
\[
u^{\varepsilon_m}(t,\tilde{x},\bar{x})=\mathbb{\hat{E}}\left[u^{\varepsilon_m}(t-\delta,\widetilde{X}^{\varepsilon_m,x}_{\delta},\overline{X}^{\varepsilon_m,x}_{\delta})\right],
\]
which together with  Assertion (ii) of Lemma \ref{myw608} implies that
\begin{align*}
\widetilde{u}^*(t,\tilde{x})=\lim\limits_{m\rightarrow\infty}\mathbb{\hat{E}}\left[\widetilde{u}^*(t-\delta,\widetilde{X}^{\varepsilon_m,x}_{\delta})\right].
\end{align*}
It follows that
\begin{align}\label{myw615}
\psi(t,\tilde{x})\leq\limsup\limits_{m\rightarrow\infty}\mathbb{\hat{E}}\left[\psi(t-\delta,\widetilde{X}^{\varepsilon_m,x}_{\delta})\right].
\end{align}

{\bf Step 2} (The subsolution property).
For each $m\geq 1$,  define
\[
\xi^{1,m}=\int^{\delta}_{0}\widetilde{b}(\widetilde{X}^{\varepsilon_m,{x}}_s,\overline{X}^{\varepsilon_m,{x}}_s)ds\ \text{and}\ \xi^{2,m}=\int^{\delta}_{0}\widetilde{\sigma}(\widetilde{X}^{\varepsilon_m,{x}}_s,\overline{X}^{\varepsilon_m,{x}}_s)dB_s.
\]

Note that
\begin{align}\label{myw110}\begin{split}
\psi(t-\delta,\widetilde{X}^{\varepsilon_m,x}_{\delta})-\psi(t,\tilde{x})=\psi(t-\delta,\widetilde{X}^{\varepsilon_m,x}_{\delta})-\psi(t,\widetilde{X}^{\varepsilon_m,x}_{\delta})
+\psi(t,\widetilde{X}^{\varepsilon_m,x}_{\delta})-
\psi(t,\tilde{x}).\end{split}
\end{align}
Then, applying Taylor's expansion yields that,
\begin{align*}
&\psi(t-\delta,\widetilde{X}^{\varepsilon_m,x}_{\delta})-\psi(t,\widetilde{X}^{\varepsilon_m,x}_{\delta})
=-\partial_t\psi(t,\tilde{x})\delta+\epsilon^{1,m},\\
&\psi(t,\widetilde{X}^{\varepsilon_m,x}_{\delta})-
\psi(t,\tilde{x})=\left\langle D\psi(t,\tilde{x}),\left(\xi^{1,m}+\xi^{2,m}\right)\right\rangle+\frac{1}{2}\left\langle D^2\psi(t,\tilde{x})\xi^{2,m},\xi^{2,m}\right\rangle
+\epsilon^{2,m},
\end{align*}
with
\begin{align*}
\epsilon^{1,m}&=\delta\int^1_0\left[-\partial_t\psi(t-\alpha\delta,\widetilde{X}^{\varepsilon_m,x}_{\delta})+\partial_t\psi(t,\tilde{x})\right]d\alpha,\\
\epsilon^{2,m}&=\int^1_0 \left\langle D\psi(t,\tilde{x}+\xi^{2,m}+\alpha\xi^{1,m})-D\psi(t,\tilde{x}),\xi^{1,m}\right\rangle d\alpha\\ &\ \ \ \ \ \  +
\int^1_0\int^1_0\left\langle \left(D^2\psi(t,\tilde{x}+\alpha\beta\xi^{2,m})-D^2\psi(t,\tilde{x})\right)\xi^{2,m},\xi^{2,m}\right\rangle\alpha d\beta d\alpha.
\end{align*}
Denote $J^m$ by
\[
J^m:=-\partial_t\psi(t,\tilde{x})\delta+\left\langle D\psi(t,\tilde{x}),\xi^{1,m}\right\rangle+\gamma^{m}\ \text{and}\ \gamma^{m}:=\frac{1}{2}\int^{\delta}_0\widetilde{\sigma}^{D^2\psi(t,\tilde{x})}(\widetilde{X}^{\varepsilon_m,{x}}_s,\overline{X}^{\varepsilon_m,{x}}_s)d\langle B\rangle_s.
\]
In view of the equation \eqref{myw110}, we deduce that
\begin{align*}
\psi(t-\delta,\widetilde{X}^{\varepsilon_m,x}_{\delta})-\psi(t,\tilde{x})=J^{m}+\frac{1}{2}\left\langle D^2\psi(t,\tilde{x})\xi^{2,m},\xi^{2,m}\right\rangle-\gamma^m+\left\langle D\psi(t,\tilde{x}),\xi^{2,m}\right\rangle+\epsilon^{1,m}+\epsilon^{2,m}.
\end{align*}
Note  that $\left\langle D\psi(t,\tilde{x}),\xi^{2,m}\right\rangle$ has no mean uncertainty. Thus, with the help of Lemma \ref{myw619}, we obtain that
\begin{align*}
\left|\mathbb{\hat{E}}[\psi(t-\delta,\widetilde{X}^{\varepsilon_m,x}_{\delta})]-\psi(t,\tilde{x})-\mathbb{\hat{E}}\left[ J^m\right]\right|
=\left|\mathbb{\hat{E}}\left[J^m+\epsilon^{1,m}+\epsilon^{2,m}\right]-\mathbb{\hat{E}}\left[ J^m\right]\right|
\leq \mathbb{\hat{E}}[|\epsilon^{1,m}|+|\epsilon^{2,m}|].
\end{align*}
Note that $\delta<1$. Recalling assumption (H3)  and Lemma \ref{myw6101},   we get that
\begin{align*}
\mathbb{\hat{E}}[|\xi^{1,m}|^2]\leq C(L_1,L_2)(1+|\tilde{x}|^2)|\delta|^{2} \ \text{and}\ \mathbb{\hat{E}}[|\xi^{2,m}|^3]\leq C(L_1,L_2)(1+|\tilde{x}|^3)|\delta|^{\frac{3}{2}}.
\end{align*}
Then, from the definition of $\epsilon^{i,m}$, $i=1,2$,  we conclude that
\[
\mathbb{\hat{E}}[|\epsilon^{1,m}|+|\epsilon^{2,m}|]\leq C(L_1,L_2,\psi)(1+|\tilde{x}|^3)|\delta|^{\frac{3}{2}},
\]
which indicates that
\begin{align}\label{myw909}
\left|\mathbb{\hat{E}}[\psi(t-\delta,\widetilde{X}^{\varepsilon_m,x}_{\delta})]-\psi(t,\tilde{x})-\mathbb{\hat{E}}\left[ J^m\right]\right|
\leq  C(L_1,L_2,\psi)(1+|\tilde{x}|^3)|\delta|^{\frac{3}{2}}.
\end{align}

We claim that
\begin{align}\label{myw1201}
\left|\mathbb{\hat{E}}\left[ J^{m}\right]+H(t,\tilde{x},\psi)\delta\right|\leq  C(L_1,L_2,\eta,\psi)({1+|{x}|^3})\left(\frac{\varepsilon_m}{\delta_m}+\sqrt{\delta_m}+\sqrt{\delta}+\sqrt{\rho_m}\right)\delta,
\end{align}
whose proof will be given in step 4. Here the constants $\delta_m$ and $\rho_m$ will be given in step 3.

From the  inequalities \eqref{myw909} and \eqref{myw1201}, we derive that
\begin{align*}
\left|\mathbb{\hat{E}}[\psi(t-\delta,\widetilde{X}^{\varepsilon_m,x}_{\delta})]-\psi(t,\tilde{x})+H(t,\tilde{x},\psi)\delta\right|\leq C(L_1,L_2,\eta,\psi)(1+|{x}|^3)\left(\frac{\varepsilon_m}{\delta_m}+\sqrt{\delta_m}+\sqrt{\delta}+\sqrt{\rho_m}\right)\delta,
\end{align*}
which implies that
\begin{align*}
\frac{1}{\delta}\left(\mathbb{\hat{E}}[\psi(t-\delta,\widetilde{X}^{\varepsilon_m,x}_{\delta})]-\psi(t,\tilde{x})\right)\leq -H(t,\tilde{x},\psi)+ C(L_1,L_2,\eta,\psi)(1+|{x}|^3)\left(\frac{\varepsilon_m}{\delta_m}+\sqrt{\delta_m}+\sqrt{\delta}+\sqrt{\rho_m}\right).
\end{align*}
Consequently, we put the above inequality into the equation \eqref{myw615}, and obtain that, for each $\delta\in(0,1)$,
\begin{align*}
0\leq \limsup\limits_{m\rightarrow\infty}\frac{1}{\delta}\left(\mathbb{\hat{E}}[\psi(t-\delta,\widetilde{X}^{\varepsilon_m,x}_{\delta})]-\psi(t,\tilde{x})\right)\leq  -H(t,\tilde{x},\psi)+C(L_1,L_2,\eta,\psi)(1+|{x}|^3)\sqrt{\delta},
\end{align*}
where we have used the fact that $\delta_m, \frac{\varepsilon_m}{\delta_m},\rho_m$ converge to $0$ as $m\rightarrow\infty$.
Sending $\delta\rightarrow 0$ yields that $H(t,\tilde{x},\psi)\leq 0$, which is the desired result.

{\bf Step 3} (Khasminskii's discretization). In order to prove the inequality \eqref{myw1201}, we shall introduce Khasminskii's discretization for the fast component $\overline{X}^{\varepsilon,x}$.
For each positive integer $m$, we define   \[\delta_m:=\varepsilon_m\sqrt[4]{\ln{\varepsilon_m^{-1}}}.\] Note that $\delta_m$ converges to $0$ as $m\rightarrow\infty$. Then,  we split the time interval $[0, \delta]$ in length $\delta_m$ for large enough $m$.
Next, consider the following auxiliary stochastic process, for any  $s\in [l\delta_m, (l+1)\delta_m\wedge\delta)$, $l=0,\ldots,N^m$  with $N^m:=[\frac{\delta}{\delta_m}]$,
$$ \overline{X}^{D,\varepsilon_m,x}_s=\overline{X}^{\varepsilon_m,x}_{l\delta_m}+\int_{l\delta_m}^s\frac{\overline{b}(\widetilde{X}^{\varepsilon_m,x}_{l\delta_m},\overline{X}^{D,\varepsilon_m,x}_r)}{\varepsilon_m}dr
+
\int_{l\delta_m}^s\frac{\overline{\sigma} (\widetilde{X}^{\varepsilon_m,x}_{l\delta_m},\overline{X}^{D,\varepsilon_m,x}_r)}{\sqrt{\varepsilon_m}}dB_r, $$
which is well-posed in   light of the assumption (H1).

By Lemma \ref{myw6101} and a standard calculus for $G$-SDEs, we could get that, for any  $s\in [l\delta_m, (l+1)\delta_m\wedge\delta)$,
\begin{align*}
&\hat{\mathbb{E}}\left[\left|\overline{X}^{\varepsilon_m,x}_s-\overline{X}^{D,\varepsilon_m,x}_s\right|^2\right]\\
 & \leq C(L_1)\Big(\frac{\delta_m}{|\varepsilon_m|^2}+\frac{1}{\varepsilon_m}\Big)\int_{l\delta_m}^{s}\left(\hat{\mathbb{E}}\left[\left|\widetilde{ X}^{\varepsilon_m,x}_r-\widetilde{X}^{\varepsilon_m,x}_{l\delta_m}\right|^2\right]+\hat{\mathbb{E}}\left[\left|\overline{X}^{\varepsilon_m,x}_r-\overline{X}^{D,\varepsilon_m,x}_{r}\right|^2\right]\right)dr\\
&\leq C(L_1,L_2)\Big(\frac{\delta_m}{|\varepsilon_m|^2}+\frac{1}{\varepsilon_m}\Big)\left((1+|\tilde{x}|^2)|\delta_m|^2+\int_{l\delta_m}^{s}\hat{\mathbb{E}}\left[\left|\overline{X}^{\varepsilon_m,x}_r-\overline{X}^{D,\varepsilon_m,x}_{r}\right|^2\right]dr\right),
\end{align*}
which together with Gronwall's inequality implies that,
\begin{align}\label{myw610}
\hat{\mathbb{E}}\left[\left|\overline{X}^{\varepsilon_m,x}_s-\overline{X}^{D,\varepsilon_m,x}_s\right|^2\right]
\leq (1+|\tilde{x}|^2)\rho_m
\end{align}
with
\begin{align*}
\rho_m:= C(L_1,L_2)\Big(\frac{\delta_m}{|\varepsilon_m|^2}+\frac{1}{\varepsilon_m}\Big)|\delta_m|^2 \exp{\left(C(L_1,L_2)\left(\frac{\delta_m}{|\varepsilon_m|^2}+\frac{1}{\varepsilon_m}\right)\delta_m\right)}.
\end{align*}
Recalling the choice of  $\delta_m$,  one can easily check that $\rho_m$  converges to $0$ as $m \rightarrow\infty$.

{\bf Step 4} (The proof of the inequality \eqref{myw1201}).
For each $l=0,\ldots,N^m$, set
\begin{align*}
&\xi^{1,m,l}=\int^{(l+1)\delta_m\wedge\delta}_{l\delta_m}\widetilde{b}(\widetilde{X}^{\varepsilon_m,x}_{s},\overline{X}^{\varepsilon_m,{x}}_s)ds, \ \ \xi^{D,1,m,l}=\int^{(l+1)\delta_m\wedge\delta}_{l\delta_m}\widetilde{b}(\widetilde{X}^{\varepsilon_m,x}_{l\delta_m},\overline{X}^{D,\varepsilon_m,{x}}_s)ds, \\
& \gamma^{D,m,l}=\frac{1}{2}\sum\limits_{i,j=1}^d\int^{(l+1)\delta_m\wedge\delta}_{l\delta_m}\widetilde{\sigma}^{D^2\psi(t,\tilde{x})}_{ij}(\widetilde{X}^{\varepsilon_m,x}_{l\delta_m},\overline{X}^{D,\varepsilon_m,{x}}_s)d\langle B^i,B^j\rangle_s,
\end{align*}
and
\[
J^{D,m,l}:=-\partial_t\psi(t,\tilde{x})((l+1)\delta_m\wedge\delta-l\delta_m)+\left\langle D\psi(t,\tilde{x}),\xi^{D,1,m,l}\right\rangle+\gamma^{D,m,l}.
\]

From Lemma \ref{myw6101} and the equation \eqref{myw610}, we get that
\begin{align*}
&\mathbb{\hat{E}}\left[\left|\left\langle D\psi(t,\tilde{x}),\xi^{1,m}\right\rangle-\sum\limits_{l=0}^{N^m}\left\langle D\psi(t,\tilde{x}),\xi^{D,1,m,l}\right\rangle\right|\right]\leq \sum\limits_{l=0}^{N^m}C(\psi) \mathbb{\hat{E}}\left[\left|\xi^{1,m,l}-\xi^{D,1,m,l}\right|\right]
\\&\leq\sum\limits_{l=0}^{N^m}C(L_1,\psi)\int^{(l+1)\delta_m\wedge\delta}_{l\delta_m} \mathbb{\hat{E}}\left[\left|\widetilde{X}^{\varepsilon_m,x}_{s}-\widetilde{X}^{\varepsilon_m,x}_{l\delta_m}\right|+\left|\overline{X}^{\varepsilon_m,{x}}_s-\overline{X}^{D,\varepsilon_m,{x}}_s\right|\right]ds
 \\
&\leq \sum\limits_{l=0}^{N^m}C(L_1,L_2,\psi)(1+|\tilde{x}|)\left(\sqrt{\delta_m}+\sqrt{\rho_m}\right)\delta_m\leq C(L_1,L_2,\psi)(1+|\tilde{x}|)\left(\sqrt{\delta_m}+\sqrt{\rho_m}\right)\delta,
\end{align*}
where we have used the fact that $N^m\delta_m\leq\delta\leq (N^m+1)\delta_m$ in the last inequality. By a similar way, we
could obtain that
\[
\mathbb{\hat{E}}\left[\left| \gamma^m-\sum\limits_{l=0}^{N^m}\gamma^{D,m,l}\right|\right]\leq  C(L_1,L_2,\psi)(1+|\tilde{x}|^2)\left(\sqrt{\delta_m}+\sqrt{\rho_m}\right)\delta.
\]
Thus, it follows from the definition of $J^m$ that
\begin{align}\label{myw801}
\begin{split}
\mathbb{\hat{E}}\left[\left|J^{m}-\sum\limits_{l=0}^{N^m}J^{D,m,l}\right|\right]=&
\mathbb{\hat{E}}\left[\left|\left\langle D\psi(t,\tilde{x}),\xi^{1,m}\right\rangle+\gamma^m-\sum\limits_{l=0}^{N^m}\left\langle D\psi(t,\tilde{x}),\xi^{D,1,m,l}\right\rangle-\sum\limits_{l=0}^{N^m}\gamma^{D,m,l}\right|\right]\\
\leq& C(L_1,L_2,\psi)(1+|\tilde{x}|^2)\left(\sqrt{\delta_m}+\sqrt{\rho_m}\right)\delta.
\end{split}
\end{align}

Now, with the help of Assertion (iv) of Lemma \ref{myw502} and Lemma \ref{myw6101}, we have that,
\begin{align*}
&\mathbb{\hat{E}}\left[\left|\widetilde{G}(\widetilde{X}^{\varepsilon_m,x}_{l\delta_m},D\psi(t,\tilde{x}),D^2\psi(t,\tilde{x}))-\widetilde{G}(\tilde{x},D\psi(t,\tilde{x}),D^2\psi(t,\tilde{x}))\right|\right]\\
 & \leq C(L_1,L_2,\eta)\left(\left|D\psi(t,\tilde{x})\right|+\left|D^2\psi(t,\tilde{x})\right|\right)\mathbb{\hat{E}}\left[\left(1+\left|\widetilde{X}^{\varepsilon_m,x}_{l\delta_m}\right|^2+|\tilde{x}|^2\right)\left|\widetilde{X}^{\varepsilon_m,x}_{l\delta_m}-\tilde{x}\right|\right]\\
&\leq C(L_1,L_2,\eta,\psi)\left(\mathbb{\hat{E}}\left[\left(1+\left|\widetilde{X}^{\varepsilon_m,x}_{l\delta_m}\right|^2+|\tilde{x}|^2\right)^2\right]\mathbb{\hat{E}}\left[\left|\widetilde{X}^{\varepsilon_m,x}_{l\delta_m}-\tilde{x}\right|^2\right]\right)^{\frac{1}{2}}\leq  C(L_1,L_2,\eta,\psi)(1+|\tilde{x}|^3)\sqrt{\delta}.
\end{align*}
On the other hand, applying Lemma \ref{myw603} and Lemma \ref{myw202}  yields that, \begin{align}\label{myq206136}\begin{split}
&\mathbb{\hat{E}}\left[\left|\mathbb{\hat{E}}_{l\delta_m}\left[ \left\langle D\psi(t,\tilde{x}),\xi^{D,1,m,l}\right\rangle+\gamma^{D,m,l} \right]-\widetilde{G}(\widetilde{X}^{\varepsilon_m,x}_{l\delta_m},D\psi(t,\tilde{x}),D^2\psi(t,\tilde{x}))((l+1)\delta_m\wedge\delta-l\delta_m)\right|\right]
\\
&\leq C(L_1,L_2,\eta,\psi)\mathbb{\hat{E}}\left[\left(1+|\widetilde{X}^{\varepsilon_m,x}_{l\delta_m}|^2+|\overline{X}^{\varepsilon_m,x}_{l\delta_m}|^2\right)\right]\varepsilon_m\leq C(L_1,L_2,\eta,\psi)(1+|x|^2)\varepsilon_m.
\end{split}
\end{align}
Then, by the definition of $H$ (see inequality \eqref{myw612}) and the above two inequalities, we derive  that, for each $l=0,1,\ldots,N^m$,
\begin{align*}
&\mathbb{\hat{E}}\left[\left|\mathbb{\hat{E}}_{l\delta_m}\left[ J^{D,m,l} \right]+H(t,\tilde{x},\psi)((l+1)\delta_m\wedge\delta-l\delta_m)\right|\right]\\
&=\mathbb{\hat{E}}\left[\left|\mathbb{\hat{E}}_{l\delta_m}\left[ \left\langle D\psi(t,\tilde{x}),\xi^{D,1,m,l}\right\rangle+\gamma^{D,m,l} \right]-\widetilde{G}(\tilde{x},D\psi(t,\tilde{x}),D^2\psi(t,\tilde{x}))((l+1)\delta_m\wedge\delta-l\delta_m)\right|\right]\\
&\leq
\mathbb{\hat{E}}\left[\left|\mathbb{\hat{E}}_{l\delta_m}\left[ \left\langle D\psi(t,\tilde{x}),\xi^{D,1,m,l}\right\rangle+\gamma^{D,m,l} \right]-\widetilde{G}(\widetilde{X}^{\varepsilon_m,x}_{l\delta_m},D\psi(t,\tilde{x}),D^2\psi(t,\tilde{x}))((l+1)\delta_m\wedge\delta-l\delta_m)\right|\right]\\
&\ \ \ \ \  +\mathbb{\hat{E}}\left[\left|\widetilde{G}(\widetilde{X}^{\varepsilon_m,x}_{l\delta_m},D\psi(t,\tilde{x}),D^2\psi(t,\tilde{x}))-\widetilde{G}(\tilde{x},D\psi(t,\tilde{x}),D^2\psi(t,\tilde{x}))\right|\right]((l+1)\delta_m\wedge\delta-l\delta_m)\\
&\leq C(L_1,L_2,\eta,\psi)(1+|x|^3)\left(\varepsilon_m+\sqrt{\delta}((l+1)\delta_m\wedge\delta-l\delta_m)\right).
\end{align*}
It follows that
\begin{align*}
&\left|\mathbb{\hat{E}}\left[ \sum\limits_{l=0}^{N^m}J^{D,m,l}\right]+H(t,\tilde{x},\psi)\delta\right|=
\left|\mathbb{\hat{E}}\left[ \sum\limits_{l=0}^{N^m-1}J^{D,m,l}+\mathbb{\hat{E}}_{N^m\delta}\left[ J^{D,m,N^m} \right]\right]+H(t,\tilde{x},\psi)\delta\right|\\
&\leq
\left|\mathbb{\hat{E}}\left[ \sum\limits_{l=0}^{N^m-1}J^{D,m,l}\right]+H(t,\tilde{x},\psi)N^m\delta_m\right|+\mathbb{\hat{E}}\left[\left|\mathbb{\hat{E}}_{N^m\delta}\left[ J^{D,m,N^m} \right]+H(t,\tilde{x},\psi)(\delta-N^m\delta_m)\right|\right]
\\ &\leq  \cdots \cdots \cdots
\\ &\leq \sum\limits_{l=0}^{N^m}\mathbb{\hat{E}}\left[\left|\mathbb{\hat{E}}_{l\delta_m}\left[ J^{D,m,l} \right]+H(t,\tilde{x},\psi)((l+1)\delta_m\wedge\delta-l\delta_m)\right|\right]\leq  C(L_1,L_2,\eta,\psi)(1+|x|^3)\left(\frac{\varepsilon_m}{\delta_m}+\sqrt{\delta}\right)\delta.
\end{align*}
Consequently, in view of the inequality \eqref{myw801}, we deduce that
\[
\left|\mathbb{\hat{E}}\left[ J^{m}\right]+H(t,\tilde{x},\psi)\delta\right|\leq C(L_1,L_2,\eta,\psi)({1+|{x}|^3})\left(\frac{\varepsilon_m}{\delta_m}+\sqrt{\delta_m}+\sqrt{\delta}+\sqrt{\rho_m}\right)\delta.
\]
The proof is complete.
\end{proof}
\begin{remark}\label{myw9019}{\upshape
The assumption (H3) is used to establish a uniform $L^3_G$-estimate of the slow diffusion process, which is crucial for inequality \eqref{myw909} in our setting. Indeed, one could strengthen the assumption (H2) to remove (H3); see Lemma 3.2 in \cite{HW}.
}
\end{remark}
\begin{lemma}\label{myw202}
Suppose that $(p,A)$ is in $\mathbb{R}^n\times\mathbb{S}(n)$. Then, for each $s\in [l\delta_m,(l+1)\delta_m\wedge\delta]$, $l=0,\ldots,N^m$, it holds that
\begin{align*}
&\left|\mathbb{\hat{E}}_{l\delta_m}\left[{\int^{s}_{l\delta_m}\langle p,  \widetilde{b}(\widetilde{X}^{\varepsilon_m,x}_{l\delta_m},\overline{X}^{D,\varepsilon_m,{x}}_r)\rangle dr}+\frac{1}{2}\int^{s}_{l\delta_m}\widetilde{\sigma}^A(\widetilde{X}^{\varepsilon_m,x}_{l\delta_m},\overline{X}^{D,\varepsilon_m,{x}}_r)d\langle B\rangle_r\right]-\widetilde{G}(\widetilde{X}^{\varepsilon,x}_{l\delta_m},p,A) (s-l\delta_m)\right|\\
&\leq C(L_1,L_2,\eta) (|p|+|A|)\left(1+|\widetilde{X}^{\varepsilon_m,x}_{l\delta_m}|^2+|\overline{X}^{\varepsilon_m,x}_{l\delta_m}|^2\right)\varepsilon_m.
\end{align*}
\end{lemma}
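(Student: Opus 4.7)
} The plan is to reduce the conditional $G$-expectation on $[l\delta_m,s]$ to the uniform-in-time ergodic estimate \eqref{myw125} obtained in the proof of Lemma \ref{myw123}, via a time rescaling that turns $\overline{X}^{D,\varepsilon_m,x}$ into a solution of the auxiliary $G$-SDE \eqref{myw501}. As in Step 4 of the preceding proof, we may assume $\widetilde{h}_{ij}=\overline{h}_{ij}=0$ throughout.

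First, recall that on $[l\delta_m,(l+1)\delta_m\wedge\delta)$ the drift and diffusion of $\overline{X}^{D,\varepsilon_m,x}$ are frozen at the $\mathcal{F}_{l\delta_m}$-measurable point $\widetilde{X}^{\varepsilon_m,x}_{l\delta_m}$. I would perform the time change $r=l\delta_m+\varepsilon_m t'$ and set $Y_{t'}:=\overline{X}^{D,\varepsilon_m,x}_{l\delta_m+\varepsilon_m t'}$ for $t'\in[0,T_l]$, with $T_l:=(s-l\delta_m)/\varepsilon_m$. By the scaling invariance of $G$-Brownian motion, $\tilde B_{t'}:=(B_{l\delta_m+\varepsilon_m t'}-B_{l\delta_m})/\sqrt{\varepsilon_m}$ is again a $G$-Brownian motion with the same generator $G$ and is independent of $\mathcal{F}_{l\delta_m}$, while $d\langle B\rangle_r=\varepsilon_m\,d\langle\tilde B\rangle_{t'}$. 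A direct substitution shows that $Y$ satisfies precisely the auxiliary $G$-SDE \eqref{myw501} with parameter $\tilde x'=\widetilde{X}^{\varepsilon_m,x}_{l\delta_m}$, initial value $\bar x'=\overline{X}^{\varepsilon_m,x}_{l\delta_m}$, and driving noise $\tilde B$.

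Next, I would invoke the $G$-Markov/flow property for $G$-SDEs (Theorem 3.7 in Chapter V of \cite{P10}; compare the derivation of \eqref{myw607} in Lemma \ref{myw203}): since the coefficients of $Y$ and the integrands depend only on the pair $(\tilde x',\bar x')\in\mathcal{F}_{l\delta_m}$ and on increments of $B$ after time $l\delta_m$, the conditional $G$-expectation freezes into an unconditional one evaluated at the pair $(\tilde x',\bar x')$. Together with $dr=\varepsilon_m\,dt'$, this rewrites the quantity inside the absolute value in the statement as
\begin{equation*}
\varepsilon_m\left\{\mathbb{\hat{E}}\!\left[\int_0^{T_l}\!\langle p,\widetilde b(\tilde x',\overline{X}^{x'}_{t'})\rangle\,dt'+\int_0^{T_l}\!\widetilde{\sigma}^A(\tilde x',\overline{X}^{x'}_{t'})\,d\langle B\rangle_{t'}\right]-\widetilde{G}(\tilde x',p,A)\,T_l\right\}\bigg|_{(\tilde x',\bar x')=(\widetilde{X}^{\varepsilon_m,x}_{l\delta_m},\overline{X}^{\varepsilon_m,x}_{l\delta_m})}.
\end{equation*}

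Finally, I would apply the ergodic estimate \eqref{myw125} with the frozen pair $(\tilde x',\bar x')$ and $t=T_l$. Because \eqref{myw125} is \emph{uniform in $t\ge 0$}, the expression inside the braces is dominated by $C(L_1,L_2,\eta)(|p|+|A|)(1+|\widetilde{X}^{\varepsilon_m,x}_{l\delta_m}|^2+|\overline{X}^{\varepsilon_m,x}_{l\delta_m}|^2)$, and the prefactor $\varepsilon_m$ yields exactly the claimed bound. The main obstacle is the rigorous justification of the $G$-Brownian time change together with the freezing of $\mathcal{F}_{l\delta_m}$-measurable parameters under conditional $G$-expectation; this is standard in Peng's framework by the time-homogeneity of $G$-SDEs and the nonlinear Markov/flow property, so no new estimates are needed beyond those already collected in Section 2.
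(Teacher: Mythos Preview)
Your proposal is correct and follows essentially the same approach as the paper: both arguments rest on the scaling property of $G$-Brownian motion to identify the frozen fast process with the auxiliary $G$-SDE \eqref{myw501} run on the time scale $t/\varepsilon_m$, the Markov/flow property (Theorem 5.1 in \cite{HJPS1}) to turn the conditional $G$-expectation into an unconditional one with frozen parameters, and the uniform-in-time ergodic estimate \eqref{myw125}. The only cosmetic difference is that the paper introduces an intermediate process $\overline{X}^{\prime,\varepsilon,(\tilde{x},\bar{x})}$ and first proves the distributional identity \eqref{myw803} for generic $(\tilde{x},\bar{x})$ before invoking Markov, whereas you perform the time change directly on $\overline{X}^{D,\varepsilon_m,x}$.
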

\begin{proof}
Consider the following $G$-SDE:  for each $x=(\tilde{x},\bar{x})\in\mathbb{R}^{2n}$,
\begin{align*}
 \overline{X}^{\prime,\varepsilon,(\tilde{x},\bar{x})}_t=\bar{x}+\int^t_0\frac{\overline{b}(\tilde{x},\overline{X}^{\prime,\varepsilon,(\tilde{x},\bar{x})}_r)}{\varepsilon}dr+\int^t_0\frac{\overline{\sigma}(\tilde{x},\overline{X}^{\prime,\varepsilon,(\tilde{x},\bar{x})}_r)}{\sqrt{\varepsilon}}dB_r.
\end{align*}
Recalling equation \eqref{myw501}, we get that
\begin{align*}
\overline{X}^{(\tilde{x},\bar{x})}_{\frac{t}{\varepsilon_m}}
=&\bar{x}+\frac{1}{\varepsilon_m}\int^{t}_0\overline{b}(\tilde{x},\overline{X}^{(\tilde{x},\bar{x})}_{\frac{r}{\varepsilon_m}})dr+{\frac{1}{\sqrt{\varepsilon_m}}}\int^{t}_0\overline{\sigma}(\tilde{x},\overline{X}^{(\tilde{x},\bar{x})}_{\frac{r}{\varepsilon_m}})dB^{\varepsilon_m}_r,
\end{align*}
where $(B^{\varepsilon_m}_r=\sqrt{\varepsilon_m} B_{\frac{r}{\varepsilon_m}})_{r\geq 0}$ also is a $G$-Brownian motion.
Then, by a standard approximation method, we derive that $(\overline{X}^{\prime,\varepsilon_m,(\tilde{x},\bar{x})}_{{t}},B_t)$ has the same distribution
as $(\overline{X}^{(\tilde{x},\bar{x})}_{\frac{t}{\varepsilon_m}},B_t^{\varepsilon_m})$. It follows that
\begin{align*}
&\mathbb{\hat{E}}\left[{\int^{t}_0\langle p,  \widetilde{b}(\tilde{x},\overline{X}^{\prime,\varepsilon_m,(\tilde{x},\bar{x})}_r)\rangle dr}+\frac{1}{2}\sum_{i,j=1}^d\int^{t}_0\widetilde{\sigma}^A_{ij}(\tilde{x},\overline{X}^{\prime,\varepsilon_m,(\tilde{x},\bar{x})}_r)d\langle B^i, B^j\rangle_r\right]\\
&=\mathbb{\hat{E}}\left[{\int^{t}_0\langle p,  \widetilde{b}(\tilde{x},\overline{X}^{(\tilde{x},\bar{x})}_{\frac{r}{\varepsilon_m}})\rangle dr}+\frac{1}{2}\sum_{i,j=1}^d\int^{t}_0\widetilde{\sigma}^A_{ij}(\overline{X}^{(\tilde{x},\bar{x})}_{\frac{r}{\varepsilon_m}})d\langle B^{\varepsilon_m,i}, B^{\varepsilon_m,j}\rangle_r\right]\\
&=\varepsilon_m\mathbb{\hat{E}}\left[{\int^{\frac{t}{\varepsilon_m}}_0\langle p,  \widetilde{b}(\tilde{x},\overline{X}^{{x}}_{r})\rangle dr}+\frac{1}{2}\sum_{i,j=1}^d\int^{\frac{t}{\varepsilon_m}}_0\widetilde{\sigma}^A_{ij}(\tilde{x},\overline{X}^{{x}}_{r})d\langle B^{i}, B^{j}\rangle_r\right],
\end{align*}
which together with inequality \eqref{myw125} yields  that, for each $t>0$,
\begin{align}\begin{split}\label{myw803}
&\left|\mathbb{\hat{E}}\left[{\int^{t}_0\langle p,  \widetilde{b}(\tilde{x},\overline{X}^{\prime,\varepsilon_m,(\tilde{x},\bar{x})}_r)\rangle dr}+\frac{1}{2}\sum_{i,j=1}^d\int^{t}_0\widetilde{\sigma}^A_{ij}(\tilde{x},\overline{X}^{\prime,\varepsilon_m,(\tilde{x},\bar{x})}_r)d\langle B^i, B^j\rangle_r\right]-\widetilde{G}(\tilde{x},p,A) t\right|
\\
& \leq \left|\mathbb{\hat{E}}\left[{\int^{\frac{t}{\varepsilon_m}}_0\langle p,  \widetilde{b}(\tilde{x},\overline{X}^{{x}}_{r})\rangle dr}+\frac{1}{2}\sum_{i,j=1}^d\int^{\frac{t}{\varepsilon_m}}_0\widetilde{\sigma}^A_{ij}(\tilde{x},\overline{X}^{{x}}_{r})d\langle B^{i}, B^{j}\rangle_r\right]-\widetilde{G}(\tilde{x},p,A) \frac{t}{\varepsilon_m}\right|\varepsilon_m
\\
& \leq C(L_1,L_2,\eta)(|p|+|A|)(1+|\tilde{x}|^2+|\bar{x}|^2) \varepsilon_m.
\end{split}
\end{align}

On the other hand, recalling the definition $\overline{X}^{D,\varepsilon_m,x}$ and using the Markov property (see Assertion (7) of Theorem 5.1 in \cite{HJPS1}), we conclude that
\begin{align*}
\begin{split}
&\mathbb{\hat{E}}_{l\delta_m}\left[{\int^{s}_{l\delta_m}\langle p,  \widetilde{b}(\widetilde{X}^{\varepsilon_m,x}_{l\delta_m},\overline{X}^{D,\varepsilon_m,{x}}_r)\rangle dr}+\frac{1}{2}\sum_{i,j=1}^d\int^{s}_{l\delta_m}\widetilde{\sigma}^A_{ij}(\widetilde{X}^{\varepsilon_m,x}_{l\delta_m},\overline{X}^{D,\varepsilon_m,{x}}_r)d\langle B^i, B^j\rangle_r\right]\\
&=\mathbb{\hat{E}}\left[\int^{s-l\delta_m}_{0}\left\langle p,\widetilde{b}(\tilde{x}^{\prime},\overline{X}^{\prime,\varepsilon_m,(\tilde{x}^{\prime},\bar{x}^{\prime})}_{r})\right\rangle dr+\frac{1}{2}\int^{s-l\delta_m}_{0}\widetilde{\sigma}^{A}(\tilde{x}^{\prime},\overline{X}^{\prime,\varepsilon_m,(\tilde{x}^{\prime},\bar{x}^{\prime})}_{r})d\langle B\rangle_r\right]_{(\tilde{x}^{\prime},\bar{x}^{\prime})=(\widetilde{X}^{\varepsilon_m,x}_{l\delta_m},\overline{X}^{\varepsilon_m,x}_{l\delta_m})},
\end{split}
\end{align*}
which together with the inequality \eqref{myw803} indicates the desired result. This ends the proof.
\end{proof}

Finally, we are ready to state the proofs of Theorem \ref{myw521} and Theorem \ref{myw1906}.

\begin{proof}[The proof of Theorem \ref{myw521}]
 Without loss of generality, assume that $t\in[0,T]$.
 Let $(t,\tilde{x},\bar{x})\in[0,T]\times\mathbb{R}^{2n}$ be fixed. Denote by $\widetilde{u}^{\varphi}$ the solution to PDE \eqref{PDE511} with the initial condition
$\varphi.$  Similarly, we can define $u^{\varphi,\varepsilon}$. The proof is  divided into the following two steps.

{\bf  Step 1} ($\varphi\in C_{b.lip}(\mathbb{R}^n)$).
Suppose that the sequence $(\varepsilon_l)_{l\geq 1}$ converges to $0$.
Then, from Lemma \ref{myw608}, we can find a subsequence
$(\varepsilon_{l_m})_{m\geq 1}$ such that ${u}^{\varphi,\varepsilon_{l_m}}$ converges to some  function $\widetilde{u}^{\varphi,*}\in C_b([0,T]\times\mathbb{R}^n)$ on $[0,T]\times\mathbb{R}^{2n}$.
Applying Lemma \ref{myw611} yields that $\widetilde{u}^{\varphi,*}$ is a viscosity solution to the averaged PDE \eqref{PDE511}. It follows from  Lemma \ref{myw507} in appendix B that $\widetilde{u}^{\varphi,*}\equiv\widetilde{u}^{\varphi}$. Thus, we derive that
\[
\lim\limits_{m\rightarrow\infty}{u}^{\varphi,\varepsilon_{l_m}}(t,\tilde{x},\bar{x})=\widetilde{u}^{\varphi}(t,\tilde{x}),
\]
which implies the desired result.

{\bf Step 2 }($\varphi\in C(\mathbb{R}^n)$ of polynomial growth).
For each positive integer $N$, we can find a function $\varphi_N\in C_{b.lip}(\mathbb{R}^n)$ so that
\[
|\varphi_N(\tilde{x}^{\prime})-\varphi(\tilde{x}^{\prime})|\leq C(\varphi)\frac{1+|\tilde{x}^{\prime}|^{ C(\varphi)}}{N}, \ \forall \tilde{x}^{\prime}\in\mathbb{R}^n.
\]
With the help of Lemma \ref{myw6101}, we have that, for any $(s,x^{\prime})\in[0,T]\times\mathbb{R}^{2n}$,
\begin{align}\label{myw905}
\mathbb{\hat{E}}\left[\left|\varphi_N(\widetilde{X}^{\varepsilon,{x}^{\prime}}_s)-\varphi(\widetilde{X}^{\varepsilon,{x}^{\prime}}_s)\right|\right]\leq
 C(\varphi)\frac{1+\mathbb{\hat{E}}\left[\left|\widetilde{X}^{\varepsilon,{x}^{\prime}}_s\right|^{ C(\varphi)}\right]}{N}\leq C(L_1,L_2,T,\varphi)\frac{1+|\tilde{x}^{\prime}|^{ C(\varphi)}}{N}.
\end{align}
Then, from inequality \eqref{myw905}, we get that
\begin{align*}
&\liminf\limits_{\varepsilon\rightarrow 0}\mathbb{\hat{E}}\left[\varphi(\widetilde{X}^{\varepsilon,{x}^{\prime}}_s)\right]\geq-C(L_1,L_2,T,\varphi)\frac{1+|\tilde{x}^{\prime}|^{ C(\varphi)}}{N}+\lim\limits_{\varepsilon\rightarrow 0}\mathbb{\hat{E}}\left[\varphi_N(\widetilde{X}^{\varepsilon,{x}^{\prime}}_s)\right],\\
&\limsup\limits_{\varepsilon\rightarrow 0}\mathbb{\hat{E}}\left[\varphi(\widetilde{X}^{\varepsilon,{x}^{\prime}}_s)\right]\leq C(L_1,L_2,T,\varphi)\frac{1+|\tilde{x}^{\prime}|^{ C(\varphi)}}{N}+\lim\limits_{\varepsilon\rightarrow 0}\mathbb{\hat{E}}\left[\varphi_N(\widetilde{X}^{\varepsilon,{x}^{\prime}}_s)\right],
\end{align*}
which implies that
\[
\limsup\limits_{\varepsilon\rightarrow 0}\mathbb{\hat{E}}\left[\varphi(\widetilde{X}^{\varepsilon,{x}^{\prime}}_s)\right]\leq\liminf\limits_{\varepsilon\rightarrow 0}\mathbb{\hat{E}}\left[\varphi(\widetilde{X}^{\varepsilon,{x}^{\prime}}_s)\right]+C(L_1,L_2,T,\varphi)\frac{1+|\tilde{x}^{\prime}|^{ C(\varphi)}}{N}.
\]
Sending $N\rightarrow\infty$, we deduce that $\lim\limits_{\varepsilon\rightarrow 0}\mathbb{\hat{E}}\left[\varphi(\widetilde{X}^{\varepsilon,{x}^{\prime}}_s)\right]$ exists. Moreover, recalling equation \eqref{myw905}, we obtain that
\[
\lim\limits_{\varepsilon\rightarrow 0}\mathbb{\hat{E}}\left[\varphi(\widetilde{X}^{\varepsilon,{x}^{\prime}}_s)\right]=\lim\limits_{N\rightarrow \infty}
\lim\limits_{\varepsilon\rightarrow 0}\mathbb{\hat{E}}\left[\varphi_N(\widetilde{X}^{\varepsilon,{x}^{\prime}}_s)\right]=\lim\limits_{N\rightarrow \infty}\widetilde{u}^{\varphi_N}(s,\tilde{x}^{\prime})=:\widetilde{u}^{\varphi,*}(s,\tilde{x}^{\prime}).
\]

On the other hand, applying equation \eqref{myw905} again, we get that
\[
|\widetilde{u}^{\varphi_N}(s,\tilde{x}^{\prime})-\widetilde{u}^{\varphi,*}(s,\tilde{x}^{\prime})|=\lim\limits_{\varepsilon\rightarrow 0}\left|\mathbb{\hat{E}}\left[\varphi_N(\widetilde{X}^{\varepsilon,{x}^{\prime}}_s)\right]-\mathbb{\hat{E}}\left[\varphi(\widetilde{X}^{\varepsilon,{x}^{\prime}}_s)\right]\right|\leq C(\varphi)\frac{1+|\tilde{x}^{\prime}|^{ C(\varphi)}}{N},
\]
which indicates that $\widetilde{u}^{\varphi_N}$ converges uniformly to $\widetilde{u}^{\varphi,*}$ on each compact subset of $[0,T]\times\mathbb{R}^{n}$. In the spirit of Proposition 4.3 in \cite{CMI} and Lemma \ref{myw507} in appendix B, we conclude that $\widetilde{u}^{\varphi,*}$ is the unique viscosity solution
to the averaged PDE \eqref{PDE511}. The proof is complete.
\end{proof}

\begin{proof}[The proof of Theorem \ref{myw1906}]
It suffices to prove the case that $k=2$, since other cases can be proved by iterative method. Without loss of generality, assume that
$t_1,t_2\in[0,T]$.

From Theorem \ref{myw521} and Lemma \ref{myw6101}, it is easy to check that
$\varphi ^{1}(\tilde{x}^1) =\lim\limits_{\varepsilon\rightarrow0}\mathbb{\hat{E}}[\varphi(\tilde{x}^1,\widetilde{X}^{\varepsilon,(\tilde{x}^{1},0)}_{t_2-t_{1}})]
$ is well-defined and  of polynomial growth.
We claim that
\begin{align*}
\text{$(\tilde{x}^{1},\bar{x}^1)\rightarrow\mathbb{\hat{E}}\left[\varphi(\tilde{x}^1,\widetilde{X}^{\varepsilon,(\tilde{x}^{1},\bar{x}^1)}_{t_2-t_{1}})\right]$ uniformly converges to $\varphi ^{1}(\tilde{x}^1)$ on  each compact subset of $\mathbb{R}^{2n}$},
\end{align*}
whose proof will be given later.

According to the Markov property (see \cite{HJPS1}) and Lemma \ref{myw6101}, we conclude that,
\begin{align*}
&\left|\mathbb{\hat{E}}\left[\varphi(\widetilde{X}^{\varepsilon,x}_{t_1},\widetilde{X}^{\varepsilon,x}_{t_2})\right]-\mathbb{\hat{E}}\left[\varphi^{1}(\widetilde{X}^{\varepsilon,x}_{t_1})\right]\right|\leq \mathbb{\hat{E}}\left[\left|\mathbb{\hat{E}}\left[\varphi(\tilde{x}^1,\widetilde{X}^{\varepsilon,(\tilde{x}^1,\bar{x}^1)}_{t_2-t_1})\right]_{(\tilde{x}^1,\bar{x}^1)=(\widetilde{X}^{\varepsilon,x}_{t_1},\overline{X}^{\varepsilon,x}_{t_1})}
-\varphi^{1}(\widetilde{X}^{\varepsilon,x}_{t_1})\right|\right]
\\&
\leq \sup\limits_{|\tilde{x}^1|,|\bar{x}^1|\leq N}\left|\mathbb{\hat{E}}\left[\varphi(\tilde{x}^1,\widetilde{X}^{\varepsilon,(\tilde{x}^1,\bar{x}^1)}_{t_2-t_1})\right]
-\varphi^{1}(\tilde{x}^1)\right|+\frac{C(L_1,L_2,T,\varphi)}{N}\mathbb{\hat{E}}\left[\left(1+\left|\widetilde{X}^{\varepsilon,x}_{t_1}\right|^{C(\varphi)}+\left|\overline{X}^{\varepsilon,x}_{t_1}\right|\right)\right]\\&
\leq \sup\limits_{|\tilde{x}^1|,|\bar{x}^1|\leq N}\left|\mathbb{\hat{E}}\left[\varphi(\tilde{x}^1,\widetilde{X}^{\varepsilon,(\tilde{x}^1,\bar{x}^1)}_{t_2-t_1})\right]
-\varphi^{1}(\tilde{x}^1)\right|+\frac{C(L_1,L_2,T,\varphi)}{N}(1+|x|^{C(\varphi)}), \ \ \forall N\geq 1,
\end{align*}
where we have used Lemma \ref{myw603} in the last inequality. It follows that
\[
\limsup\limits_{\varepsilon\rightarrow0}\left|\mathbb{\hat{E}}\left[\varphi(\widetilde{X}^{\varepsilon,x}_{t_1},\widetilde{X}^{\varepsilon,x}_{t_2})\right]-\mathbb{\hat{E}}\left[\varphi^{1}(\widetilde{X}^{\varepsilon,x}_{t_1})\right]\right|\leq \frac{C(L_1,L_2,T,\varphi)}{N}(1+|x|^{C(\varphi)}).
\]
Sending $N\rightarrow 0$, we obtain that
\[
\lim\limits_{\varepsilon\rightarrow0}\mathbb{\hat{E}}\left[\varphi(\widetilde{X}^{\varepsilon,x}_{t_1},\widetilde{X}^{\varepsilon,x}_{t_2})\right]=\lim\limits_{\varepsilon\rightarrow0}\mathbb{\hat{E}}\left[\varphi^{1}(\widetilde{X}^{\varepsilon,x}_{t_1})\right],
\]
which is the desired result.

Now, we shall prove the above claim.
For each positive integer $N$, we can find a function $\varphi_N\in C_{b.lip}(\mathbb{R}^{2n})$ so that
\[
|\varphi_N(\tilde{x}^{1},\tilde{x}^2)-\varphi(\tilde{x}^{1},\tilde{x}^2)|\leq C(\varphi)\frac{1+|\tilde{x}^{1}|^{ C(\varphi)}+|\tilde{x}^{2}|^{ C(\varphi)}}{N}, \ \forall \tilde{x}^{1},\tilde{x}^2\in\mathbb{R}^n.
\]
 Thus, according to Lemma \ref{myw6101}, we have that
\begin{align*}
&\left|\mathbb{\hat{E}}\left[\varphi(\tilde{x}^1,\widetilde{X}^{\varepsilon,(\tilde{x}^{1},\bar{x}^1}_{t_2-t_{1}})\right]-\varphi^{1}(\tilde{x}^1)\right|
\\
&\leq \left|\mathbb{\hat{E}}\left[\varphi(\tilde{x}^1,\widetilde{X}^{\varepsilon,(\tilde{x}^{1},\bar{x}^1)}_{t_2-t_{1}})\right]-\mathbb{\hat{E}}\left[\varphi_N(\tilde{x}^1,\widetilde{X}^{\varepsilon,(\tilde{x}^{1},\bar{x}^1)}_{t_2-t_{1}})\right]\right|+\left|\mathbb{\hat{E}}[\varphi_N(\tilde{x}^1,\widetilde{X}^{\varepsilon,(\tilde{x}^{1},\bar{x}^1)}_{t_2-t_{1}})]-\varphi^1_N(\tilde{x}^1)\right|+\left|\varphi^1_N(\tilde{x}^1)-\varphi^1(\tilde{x}^1)\right|
\\
&\leq \left|\mathbb{\hat{E}}\left[\varphi_N(\tilde{x}^1,\widetilde{X}^{\varepsilon,(\tilde{x}^{1},\bar{x}^1)}_{t_2-t_{1}})\right]-\varphi^1_N(\tilde{x}^1)\right|+
C(L_1,L_2,T,\varphi)\frac{1+|\tilde{x}^1|^{ C(\varphi)}}{N}.
\end{align*}
On the other hand,
by a similar analysis as in Lemma \ref{myw608}, it is easy to check that $\mathbb{\hat{E}}[\varphi_N(\tilde{x}^1,\widetilde{X}^{\varepsilon,(\tilde{x}^{1},\bar{x}^1)}_{t_2-t_{1}})]$ uniformly converges to $\varphi ^{1}_N(\tilde{x}^1)$ on  each compact subset of $\mathbb{R}^{2n}$. Consequently, from the above inequality, we get that the desired claim holds. The proof is complete.
\end{proof}

\section{Extension}
In the previous sections, we develop a useful approach to the study of averaging of $G$-SDEs with two time scales by using nonlinear stochastic analysis and viscosity solution theory.
We would like to mention that our main ideas carry over to much
more general frameworks. In this section, we will extend the previous results
to a more general case.

Consider the following fast-slow scale diffusion process (see \cite{PS}):
\begin{align*}
\begin{cases}
&{\displaystyle \widetilde{S}^{\varepsilon,x}_t=\tilde{x}+\int^t_0\widetilde{b}(\widetilde{S}^{\varepsilon,x}_s,\overline{S}^{\varepsilon,x}_s)ds+\sum\limits_{i,j=1}^d\int^t_0\widetilde{h}_{ij}(\widetilde{S}^{\varepsilon,x}_s,\overline{S}^{\varepsilon,x}_s)d\langle B^i,B^j\rangle_s+\int^t_0\widetilde{\sigma}(\widetilde{S}^{\varepsilon,x}_s,\overline{S}^{\varepsilon,x}_s)dB_s,}\\
&{ \displaystyle \overline{S}^{\varepsilon,x}_t=\bar{x}+\int^t_0\left(\frac{\overline{b}(\widetilde{S}^{\varepsilon,x}_s,\overline{S}^{\varepsilon,x}_s)}{\varepsilon}+\frac{\overline{b}^1(\widetilde{S}^{\varepsilon,x}_s,\overline{S}^{\varepsilon,x}_s)}{\sqrt{\varepsilon}}+\overline{b}^2(\widetilde{S}^{\varepsilon,x}_s,\overline{S}^{\varepsilon,x}_s)\right)ds}\\
&{\displaystyle \ \ \ \ \ \ \ \ \ \ \ \   +\sum\limits_{i,j=1}^d\int^t_0\left(\frac{\overline{h}_{ij}(\widetilde{S}^{\varepsilon,x}_s,\overline{S}^{\varepsilon,x}_s)}{\varepsilon}+\frac{\overline{h}^1_{ij}(\widetilde{S}^{\varepsilon,x}_s,\overline{S}^{\varepsilon,x}_s)}{\varepsilon}+\overline{h}^2_{ij}(\widetilde{S}^{\varepsilon,x}_s,\overline{S}^{\varepsilon,x}_s)\right)d\langle B^i,B^j\rangle_s}\\
&{\displaystyle
\ \ \ \ \ \ \ \ \ \ \ \ +\int^t_0\left(\frac{\overline{\sigma}(\widetilde{S}^{\varepsilon,x}_s,\overline{S}^{\varepsilon,x}_s)}{\sqrt{\varepsilon}}+\overline{\sigma}^1(\widetilde{S}^{\varepsilon,x}_s,\overline{S}^{\varepsilon,x}_s)\right)dB_s,}
\end{cases}
\end{align*}
for each $x=(\tilde{x},\bar{x})\in\mathbb{R}^{2n}$,
where $\overline{b}^1$, $\overline{b}^2$, $\overline{h}^1_{ij}=\overline{h}^1_{ji}$, $\overline{h}^2_{ij}=\overline{h}^2_{ji}:\mathbb{R}^{2n}\rightarrow \mathbb{R}^{n}$, $\overline{\sigma}^1:\mathbb{R}^{2n}\rightarrow \mathbb{R}^{n\times d}$ are deterministic non-periodic functions satisfying the following.
\begin{description}
\item[(H4)]There exists a constant $L_1>0$ such that, for any $x,x^{\prime}\in\mathbb{R}^{2n}$,
\begin{align*}
|{\ell}(x)-{\ell}(x^{\prime})|\leq
L_1|x-x^{\prime}|\ \text{and}\ |\ell(0)|\leq L_1,\ \text{for $\ell=\overline{b}^1, \overline{b}^2, \overline{h}^1_{ij}, \overline{h}^2_{ij}$, and $\overline{\sigma}^1$.}
\end{align*}
\end{description}

Note that the new coefficients added vary slowly compared with the original ones, and therefore they do not affect the structure of the averaged PDEs as in \cite{PS}; see Theorem \ref{myw2020615}.

\begin{lemma}\label{myw60313}
Assume   \emph{(H1)}, \emph{(H2)} and \emph{(H4)} hold. Then, there exists a constant $C(L_1,\eta,T)$, such that for each $x\in\mathbb{R}^{2n}$,
\begin{description}
	\item[(i)]$\mathbb{\hat{E}}\left[\sup\limits_{0\leq t\leq T}\left|\widetilde{S}^{\varepsilon,x}_t\right|^2\right]+\sup\limits_{0\leq t\leq T}\mathbb{\hat{E}}\left[\left|\overline{S}^{\varepsilon,x}_t\right|^2\right]\leq  C(L_1,\eta,T)\left(1+|{x}|^2\right)$,
	\item[(ii)] $\mathbb{\hat{E}}\left[\sup\limits_{0\leq t\leq T}\left|\widetilde{S}^{\varepsilon,x}_t-\widetilde{X}^{\varepsilon,x}_t\right|^2\right]\leq C(L_1,\eta,T)(1+|x|^2)\varepsilon$.
\end{description}
\end{lemma}
\begin{proof}
 We shall only  give the sketch of the proof for readers' convenience. Without loss of generality, assume that $\widetilde{h}_{ij}=\overline{h}_{ij}=\overline{h}^1_{ij}=\overline{h}^2_{ij}=0$, for $i,j=1,\ldots,d.$  The proof is divided into the following two steps.

{\bf Step 1} (Assertion {(i)}). Applying $G$-It\^{o}'s formula, we deduce that
\begin{align}\label{myw60578}\begin{split}
&e^{\frac{\eta}{\varepsilon}t}\left|\overline{S}^{\varepsilon,x}_t\right|^2-|\bar{x}|^2-M^1_t-\frac{\eta}{\varepsilon}\int^t_0e^{\frac{\eta}{\varepsilon}s}\left|\overline{S}^{\varepsilon,x}_s\right|^2ds\\
&\leq
\frac{2}{\varepsilon}\int^t_0e^{\frac{\eta}{\varepsilon}s}\left[\left\langle\overline{S}^{\varepsilon,x}_s,\overline{b}(\widetilde{S}^{\varepsilon,x}_s,\overline{S}^{\varepsilon,x}_s)\right\rangle+G\left(\left(\overline{\sigma}(\widetilde{S}^{\varepsilon,x}_s,\overline{S}^{\varepsilon,x}_s)\right)^{\top}\overline{\sigma}(\widetilde{S}^{\varepsilon,x}_s,\overline{S}^{\varepsilon,x}_s)\right)\right]ds\\
&\ \ \ + \frac{2}{\sqrt{\varepsilon}}\int^t_0e^{\frac{\eta}{\varepsilon}s}\left[\left\langle\overline{S}^{\varepsilon,x}_s,\overline{b}^1(\widetilde{S}^{\varepsilon,x}_s,\overline{S}^{\varepsilon,x}_s)\right\rangle +G\left(\left(\left(\overline{\sigma}^1\right)^{\top}\overline{\sigma}+\left(\overline{\sigma}\right)^{\top}\overline{\sigma}^1\right)(\widetilde{S}^{\varepsilon,x}_s,\overline{S}^{\varepsilon,x}_s)\right)\right]ds\\
& \ \ \ +{2}\int^t_0e^{\frac{\eta}{\varepsilon}s}\left[\left\langle\overline{S}^{\varepsilon,x}_s,\overline{b}^2(\widetilde{S}^{\varepsilon,x}_s,\overline{S}^{\varepsilon,x}_s)\right\rangle+ G\left(\left(\overline{\sigma}^1(\widetilde{S}^{\varepsilon,x}_s,\overline{S}^{\varepsilon,x}_s)\right)^{\top}\overline{\sigma}^1(\widetilde{S}^{\varepsilon,x}_s,\overline{S}^{\varepsilon,x}_s)\right)\right]ds,
\end{split}
\end{align}
where  $M^1_t$  is a symmetric $G$-martingale with $M^{1}_0=0$.

Recalling assumptions (H1) and (H2), we could get that
\begin{align*}
&2\left\langle\overline{S}^{\varepsilon,x}_s,\overline{b}(\widetilde{S}^{\varepsilon,x}_s,\overline{S}^{\varepsilon,x}_s)\right\rangle+2G\left(\left(\overline{\sigma}(\widetilde{S}^{\varepsilon,x}_s,\overline{S}^{\varepsilon,x}_s)\right)^{\top}\left(\overline{\sigma}(\widetilde{S}^{\varepsilon,x}_s,\overline{S}^{\varepsilon,x}_s)\right)\right)\\
&\leq -\frac{3}{2}\eta\left|\overline{S}^{\varepsilon,x}_s\right|^2+C(L_1,\eta)\left(1+\left|\widetilde{S}^{\varepsilon,x}_s\right|^2\right),
\end{align*}
and
\begin{align*}
&2\left\langle\overline{S}^{\varepsilon,x}_s,\overline{b}^1(\widetilde{S}^{\varepsilon,x}_s,\overline{S}^{\varepsilon,x}_s)\right\rangle +2G\left(\left(\left(\overline{\sigma}^1\right)^{\top}\overline{\sigma}+\left(\overline{\sigma}\right)^{\top}\overline{\sigma}^1\right)(\widetilde{S}^{\varepsilon,x}_s,\overline{S}^{\varepsilon,x}_s)\right)\leq C(L_1)\left(1+\left|\overline{S}^{\varepsilon,x}_s\right|^2+\left|\widetilde{S}^{\varepsilon,x}_s\right|^2\right),\\
&2\left\langle\overline{S}^{\varepsilon,x}_s,\overline{b}^2(\widetilde{S}^{\varepsilon,x}_s,\overline{S}^{\varepsilon,x}_s)\right\rangle+ 2G\left(\left(\overline{\sigma}^1(\widetilde{S}^{\varepsilon,x}_s,\overline{S}^{\varepsilon,x}_s)\right)^{\top}\overline{\sigma}^1(\widetilde{S}^{\varepsilon,x}_s,\overline{S}^{\varepsilon,x}_s)\right) \leq C(L_1)\left(1+\left|\overline{S}^{\varepsilon,x}_s\right|^2+\left|\widetilde{S}^{\varepsilon,x}_s\right|^2\right).
\end{align*}
It follows from inequality \eqref{myw60578}
 that
 \begin{align*}\begin{split}
 &e^{\frac{\eta}{\varepsilon}t}\left|\overline{S}^{\varepsilon,x}_t\right|^2\leq |\bar{x}|^2+M^1_t+\frac{1}{2\varepsilon}\left(C(L_1)(\sqrt{\varepsilon}+\varepsilon)-\eta\right)\int^t_0e^{\frac{\eta}{\varepsilon}s}\left|\overline{S}^{\varepsilon,x}_s\right|^2ds+\frac{C(L_1,\eta)}{\varepsilon}\int^t_0e^{\frac{\eta}{\varepsilon}s}\left(1+\left|\widetilde{S}^{\varepsilon,x}_s\right|^2\right)ds,
 \end{split}
 \end{align*}
 Thus, if $\varepsilon\leq \frac{\eta^2}{C(L_1)}$, we could obtain that
 \begin{align*}
\mathbb{\hat{E}}\left[\left|\overline{S}^{\varepsilon,x}_t\right|^2\right]\leq e^{-\frac{\eta}{\varepsilon}t}|\bar{x}|^2+\frac{C(L_1,\eta)}{\varepsilon}\int^t_0e^{\frac{\eta}{\varepsilon}(s-t)}\left(1+\mathbb{\hat{E}}\left[\left|\widetilde{S}^{\varepsilon,x}_s\right|^2\right]\right)ds, \ \forall t\in[0,T].
\end{align*}
Following the  proof of Assertion (iii) in Lemma \ref{myw603}, we could get
\[
\mathbb{\hat{E}}\left[\sup\limits_{0\leq t\leq T}\left|\widetilde{S}^{\varepsilon,x}_t\right|^2\right]+\sup\limits_{0\leq t\leq T}\mathbb{\hat{E}}\left[\left|\overline{S}^{\varepsilon,x}_t\right|^2\right]\leq  C(L_1,\eta,T)\left(1+|\tilde{x}|^2+|\bar{x}|^2\right).
\]
On the other hand, if $\varepsilon> \frac{\eta^2}{C(L_1)}$, then all Lipschitz constants are uniformly bounded, and the result is trivial.

{\bf Step 2} (Assertion {(ii)}).
Applying $G$-It\^{o}'s formula  yields that
\begin{align*}\begin{split}
&e^{\frac{\eta}{\varepsilon}t}\left|\overline{S}^{\varepsilon,x}_t-\overline{X}^{\varepsilon,x}_t\right|^2-M_t^2-\frac{\eta}{\varepsilon}\int^t_0e^{\frac{\eta}{\varepsilon}s}\left|\overline{S}^{\varepsilon,x}_s-\overline{X}^{\varepsilon,x}_s\right|^2ds\\
&\leq
\frac{2}{\varepsilon}\int^t_0e^{\frac{\eta}{\varepsilon}s}\left[\left\langle\overline{S}^{\varepsilon,x}_s-\overline{X}^{\varepsilon,x}_s,\overline{b}(\widetilde{S}^{\varepsilon,x}_s,\overline{S}^{\varepsilon,x}_s)-\overline{b}(\widetilde{X}^{\varepsilon,x}_s,\overline{X}^{\varepsilon,x}_s)\right\rangle  +G\left(\Gamma_s^{\top}\Gamma_s\right)\right]ds\\
&\ \ +\frac{2}{\sqrt{\varepsilon}}\int^t_0e^{\frac{\eta}{\varepsilon}s}\left[\left\langle\overline{S}^{\varepsilon,x}_s-\overline{X}^{\varepsilon,x}_s,\overline{b}^1(\widetilde{S}^{\varepsilon,x}_s,\overline{S}^{\varepsilon,x}_s)\right\rangle+G\left(\Gamma_s^{\top}\overline{\sigma}^1(\widetilde{S}^{\varepsilon,x}_s,\overline{S}^{\varepsilon,x}_s)+\left(\overline{\sigma}^1(\widetilde{S}^{\varepsilon,x}_s,\overline{S}^{\varepsilon,x}_s)\right)^{\top}\Gamma_s\right)\right] ds\\
&\ \ +{2}\int^t_0e^{\frac{\eta}{\varepsilon}s}\left[\left\langle\overline{S}^{\varepsilon,x}_s-\overline{X}^{\varepsilon,x}_s,\overline{b}^2(\widetilde{S}^{\varepsilon,x}_s,\overline{S}^{\varepsilon,x}_s)\right\rangle+G\left(\left(\overline{\sigma}^1(\widetilde{S}^{\varepsilon,x}_s,\overline{S}^{\varepsilon,x}_s)\right)^{\top}\overline{\sigma}^1(\widetilde{S}^{\varepsilon,x}_s,\overline{S}^{\varepsilon,x}_s)\right)\right] ds,
\end{split}
\end{align*}
where  $M_t^2$  is a symmetric $G$-martingale and $\Gamma_t=\overline{\sigma}(\widetilde{S}^{\varepsilon,x}_t,\overline{S}^{\varepsilon,x}_t)-\overline{\sigma}(\widetilde{X}^{\varepsilon,x}_t,\overline{X}^{\varepsilon,x}_t)$.

According to assumptions (H1) and (H2), we derive  that
\begin{align*}
&2\left\langle\overline{S}^{\varepsilon,x}_s-\overline{X}^{\varepsilon,x}_s,\overline{b}(\widetilde{S}^{\varepsilon,x}_s,\overline{S}^{\varepsilon,x}_s)-\overline{b}(\widetilde{X}^{\varepsilon,x}_s,\overline{X}^{\varepsilon,x}_s)\right\rangle  +2G\left(\Gamma_s^{\top}\Gamma_s\right)\\
& \leq -\frac{3}{2}\eta\left|\overline{S}^{\varepsilon,x}_s-\overline{X}^{\varepsilon,x}_s\right|^2+ C(L_1,\eta)\left|\widetilde{S}^{\varepsilon,x}_s-\widetilde{X}^{\varepsilon,x}_s\right|^2,
\end{align*}
and
\begin{align*}
&2\left\langle\overline{S}^{\varepsilon,x}_s-\overline{X}^{\varepsilon,x}_s,\overline{b}^1(\widetilde{S}^{\varepsilon,x}_s,\overline{S}^{\varepsilon,x}_s)\right\rangle+2G\left(\Gamma_s^{\top}\overline{\sigma}^1(\widetilde{S}^{\varepsilon,x}_s,\overline{S}^{\varepsilon,x}_s)+\left(\overline{\sigma}^1(\widetilde{S}^{\varepsilon,x}_s,\overline{S}^{\varepsilon,x}_s)\right)^{\top}\Gamma_s\right)\\
&\leq \frac{\eta}{4\sqrt{\varepsilon}}\left|\overline{S}^{\varepsilon,x}_s-\overline{X}^{\varepsilon,x}_s\right|^2+\frac{\eta}{4\sqrt{\varepsilon}}\left|\widetilde{S}^{\varepsilon,x}_s-\widetilde{X}^{\varepsilon,x}_s\right|^2+C(L_1,\eta)\sqrt{\varepsilon}\left(1+\left|\widetilde{S}^{\varepsilon,x}_s\right|^2+\left|\overline{S}^{\varepsilon,x}_s\right|^2 \right),\\
&2\left\langle\overline{S}^{\varepsilon,x}_s-\overline{X}^{\varepsilon,x}_s,\overline{b}^2(\widetilde{S}^{\varepsilon,x}_s,\overline{S}^{\varepsilon,x}_s)\right\rangle+2G\left(\left(\overline{\sigma}^1(\widetilde{S}^{\varepsilon,x}_s,\overline{S}^{\varepsilon,x}_s)\right)^{\top}\overline{\sigma}^1(\widetilde{S}^{\varepsilon,x}_s,\overline{S}^{\varepsilon,x}_s)\right)\\
& \leq \frac{\eta}{4\varepsilon}\left|\overline{S}^{\varepsilon,x}_s-\overline{X}^{\varepsilon,x}_s\right|^2+C(L_1,\eta)\left(1+\left|\widetilde{S}^{\varepsilon,x}_s\right|^2+\left|\overline{S}^{\varepsilon,x}_s\right|^2 \right).
\end{align*}
Putting the above four inequalities together, we get that
\begin{align*}
e^{\frac{\eta}{\varepsilon}t}\left|\overline{S}^{\varepsilon,x}_t-\overline{X}^{\varepsilon,x}_t\right|^2\leq & M_t^2+\frac{C(L_1,\eta)}{\varepsilon}\int^t_0e^{\frac{\eta}{\varepsilon}s}\left|\widetilde{S}^{\varepsilon,x}_s-\widetilde{X}^{\varepsilon,x}_s\right|^2ds\\
&\ \ +C(L_1,\eta)\int^t_0e^{\frac{\eta}{\varepsilon}s} \left(1+\left|\widetilde{S}^{\varepsilon,x}_s\right|^2+\left|\overline{S}^{\varepsilon,x}_s\right|^2\right)ds.
\end{align*}
Taking $G$-expectation on both sides and recalling Assertion (i), we obtain that for each $t\in[0,T]$,
\begin{align*}
\mathbb{\hat{E}}\left[\left|\overline{S}^{\varepsilon,x}_t-\overline{X}^{\varepsilon,x}_t\right|^2\right]\leq
C(L_1,\eta,T)(1+|x|^2)\varepsilon+ \frac{C(L_1,\eta)}{\varepsilon}\int^t_0e^{\frac{\eta}{\varepsilon}(s-t)}\mathbb{\hat{E}}\left[\left|\widetilde{S}^{\varepsilon,x}_s-\widetilde{X}^{\varepsilon,x}_s\right|^2\right]ds,
\end{align*}
which indicates that
\begin{align*}
\int^t_0\mathbb{\hat{E}}\left[\left|\overline{S}^{\varepsilon,x}_s-\overline{X}^{\varepsilon,x}_s\right|^2\right]ds
\leq C(L_1,\eta,T)(1+|x|^2)\varepsilon+C(L_1,\eta)\int^t_0\mathbb{\hat{E}}\left[\sup\limits_{0\leq r\leq s }\left|\widetilde{S}^{\varepsilon,x}_r-\widetilde{X}^{\varepsilon,x}_r\right|^2\right]ds.
\end{align*}
It follows from H\"{o}lder's inequality and BDG's inequality that
\begin{align*}
\mathbb{\hat{E}}\left[\sup\limits_{0\leq s\leq t}\left|\widetilde{S}^{\varepsilon,x}_s-\widetilde{X}^{\varepsilon,x}_s\right|^2\right]\leq C(L_1,\eta,T)\left((1+|x|^2)\varepsilon+\int^t_0\mathbb{\hat{E}}\left[\sup\limits_{0\leq r\leq s}\left|\widetilde{S}^{\varepsilon,x}_r-\widetilde{X}^{\varepsilon,x}_r\right|^2\right]ds\right),
\end{align*}
which together with Gronwall's inequality implies that
\begin{align*}
\mathbb{\hat{E}}\left[\sup\limits_{0\leq s\leq T}\left|\widetilde{S}^{\varepsilon,x}_s-\widetilde{X}^{\varepsilon,x}_s\right|^2\right]
\leq C(L_1,\eta,T)(1+|x|^2)\varepsilon.
\end{align*}
The proof is complete.
\end{proof}

Then we have the following asymptotics result.
\begin{theorem}\label{myw2020615}
Suppose assumptions  \emph{(H1)}-\emph{(H4)} hold. Then, for each $\varphi\in C(\mathbb{R}^n)$ of  polynomial growth,
\[
\lim\limits_{\varepsilon\rightarrow0}\mathbb{\hat{E}}\left[\varphi(\widetilde{S}^{\varepsilon,x}_t)\right]=\widetilde{u}(t,\tilde{x}), \ \forall \ (t,\tilde{x},\bar{x})\in [0,\infty)\times\mathbb{R}^{2n},
\]
where $\widetilde{u}$ is the unique viscosity solution  to the averaged PDE \eqref{PDE511}   satisfying the polynomial growth condition.
\end{theorem}
\begin{proof}
It suffices to prove that $\lim\limits_{\varepsilon\rightarrow0}\mathbb{\hat{E}}\left[\left|\varphi(\widetilde{S}^{\varepsilon,x}_t)-\varphi(\widetilde{X}^{\varepsilon,x}_t)\right|\right]=0.$
Note that $\varphi$ satisfies  the polynomial growth condition.
According to Lemma \ref{myw6101}, we obtain that for  each $N>0$,
\begin{align*}
\mathbb{\hat{E}}\left[\left|\varphi(\widetilde{S}^{\varepsilon,x}_t)-\varphi(\widetilde{X}^{\varepsilon,x}_t)\right|\right]
\leq \mathbb{\hat{E}}\left[\left|\varphi(\widetilde{S}^{\varepsilon,x}_t)-\varphi(\widetilde{X}^{\varepsilon,x}_t)\right|I_{|\widetilde{S}^{\varepsilon,x}_t|\leq N}I_{|\widetilde{X}^{\varepsilon,x}_t|\leq N}\right]+C(L_1,L_2,T,\varphi)\frac{1+|\tilde{x}|^{C(\varphi)}}{N}.
\end{align*}
For each $\epsilon>0$, there is a constant $\delta>0$ such that
$
|\varphi(x)-\varphi(y)|\leq \epsilon
$
whenever $|x|\leq N$, $|y|\leq N$, and $|x-y|\leq \delta$.
It follows that
\begin{align*}
&\mathbb{\hat{E}}\left[\left|\varphi(\widetilde{S}^{\varepsilon,x}_t)-\varphi(\widetilde{X}^{\varepsilon,x}_t)\right|I_{|\widetilde{S}^{\varepsilon,x}_t|\leq N}I_{|\widetilde{X}^{\varepsilon,x}_t|\leq N}\right]  \leq \epsilon+
\mathbb{\hat{E}}\left[\left|\varphi(\widetilde{S}^{\varepsilon,x}_t)-\varphi(\widetilde{X}^{\varepsilon,x}_t)\right|I_{|\widetilde{S}^{\varepsilon,x}_t-\widetilde{X}^{\varepsilon,x}_t|\geq \delta}\right]\\
&  \leq \epsilon+
\mathbb{\hat{E}}\left[\left|\varphi(\widetilde{S}^{\varepsilon,x}_t)-\varphi(\widetilde{X}^{\varepsilon,x}_t)\right|^2\right]\mathbb{\hat{E}}\left[\left|\widetilde{S}^{\varepsilon,x}_t-\widetilde{X}^{\varepsilon,x}_t\right|^2\right]\delta^{-2}
\leq \epsilon+C(L_1,L_2,\eta,T,\varphi)\left(1+|x|^{C(\varphi)}\right)\frac{\varepsilon}{\delta^2},
\end{align*}
where we have used Lemma \ref{myw60313} in the last inequality.
As a result, we deduce that
\begin{align*}
\limsup\limits_{\varepsilon\rightarrow 0}\mathbb{\hat{E}}\left[\left|\varphi(\widetilde{S}^{\varepsilon,x}_t)-\varphi(\widetilde{X}^{\varepsilon,x}_t)\right|\right]
\leq \epsilon+C(\varphi)\frac{1+|\tilde{x}|^{C(\varphi)}}{N}.
\end{align*}
Sending $\epsilon\rightarrow 0$, and then letting $N\rightarrow \infty$, we could complete the proof.
\end{proof}

\section*{Acknowledgements}

The authors would like to thank the editor and the anonymous
referees for their valuable suggestions and comments which led to a much improved version of the manuscript.

\appendix
\renewcommand\thesection{\normalsize Appendix A:  Ergodic theory in the $G$-expectation framework}
\section{ }

\renewcommand\thesection{A}
\normalsize

In what follows, we shall recall some basic results about  ergodic $G$-BSDE: for each $x\in\mathbb{R}^n$ and for any $ 0\leq t\leq T<\infty$,
\begin{align}\label{AppHM}\begin{cases}
&{\displaystyle  X^x_t=x+\int^t_0b(X^x_s)ds+\sum\limits_{i,j=1}^d\int^t_0h_{ij}(X^x_s)d\langle B^i,B^j\rangle_s+\int^t_0\sigma(X^x_s)dB_s,}\\
&{\displaystyle {Y}_{t}^{x}={Y}^x_T+\int_{t}^{T}\left(g(X_{s}^{x})-\lambda\right)ds+\sum_{i,j=1}^d\int^T_tg^{\prime}_{ij}(X^x_s)d\langle B^i,B^j\rangle_s-\int_{t}^{T}{Z}_{s}^{x}dB_{s}-({K}_{T}^{x}-{K}_{t}^{x}),}
\end{cases}
\end{align}
where $b,h_{ij}=h_{ji}:\mathbb{R}^n\rightarrow\mathbb{R}^n$, $\sigma:\mathbb{R}^n\rightarrow\mathbb{R}^{n\times d}$ and $g,g^{\prime}_{ij}:\mathbb{R}^n\rightarrow\mathbb{R}$ are  deterministic functions satisfying the following conditions.
\begin{description}
 \item[(A1)] There exist two constant $\kappa_1>0$ and $\kappa_2>0$ such that for each $x,x^{\prime}\in\mathbb{R}^n$,
\begin{align*}
|\ell(x)-\ell(x^{\prime})|\leq
\kappa_1|x-x^{\prime}|,\ \text{for $\ell=b,h_{ij},\sigma$}, \ \text{and} \ |\ell(x)-\ell(x^{\prime})|\leq
\kappa_2|x-x^{\prime}|,\ \text{for $\ell=g,g^{\prime}$}.
\end{align*}
 \item[(A2)] There exists a constant $\eta>0$ such that for each $x,x^{\prime}\in\mathbb{R}^n$,
\begin{align*}
&G\left((\sigma({x})-\sigma({x}^{\prime}%
))^{\top}(\sigma({x})-\sigma({x}^{\prime}))+2\left[\langle {x}-{x}^{\prime}%
,h_{ij}({x})-h_{ij}({x}^{\prime})\rangle\right]_{i,j=1}^{d}\right)+\langle {x}-{x}^{\prime
},b({x})-b({x}^{\prime})\rangle \\ & \leq-\eta|x-x^{\prime}|^{2}.
\end{align*}
\end{description}

\begin{lemma}\label{myw2}
 Under assumptions \emph{(A1)} and \emph{(A2)},  there exists a constant ${C(\kappa_1,\eta)}$, such that for any ${x},{x}^{\prime}\in\mathbb{R}^n$ and $t,s\geq 0$,
\begin{description}
\item[(i)] $\hat{\mathbb{E}}[|{X}_{t}^{{x}}|^{2}]\leq {C(\kappa_1,\eta)}(1+|{x}|^2+|\bar{\kappa}|^2)$,
\item[(ii)] $\hat{\mathbb{E}}[|{X}_{t}^{{x}}-{X}_{t}^{{x}^{\prime}}|^{2}]\leq\exp(-2\eta t)|{x}-{x}^{\prime}|^{2}$,
\end{description}
where  $\bar{\kappa}:=\max\{|b(0)|,|\sigma(0)|,|h_{ij}(0)|, 1\leq i,j\leq d\}.$
\end{lemma}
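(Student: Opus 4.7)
Both estimates will follow from applying $G$-It\^o's formula to an exponentially weighted square norm and exploiting the dissipativity condition (A2), together with the inequality
\[
\int_0^t \xi_s\, d\langle B\rangle_s \;\le\; 2\int_0^t G(\xi_s)\, ds, \qquad \xi \in M^1_G(0,T;\mathbb{S}(d)),
\]
which converts the $G$-Brownian cross-variation integrals into ordinary $ds$-integrals compatible with the hypotheses (cf.\ Step 1 of the proof of Lemma \ref{myw603}).

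For (ii), the plan is to apply $G$-It\^o's formula to $e^{2\eta t}|X^x_t - X^{x'}_t|^2$. The resulting decomposition comprises a symmetric $G$-martingale $M_t$ with $M_0 = 0$; the exponential drift $2\eta e^{2\eta s}|X^x_s - X^{x'}_s|^2\, ds$; the difference drift $2e^{2\eta s}\langle X^x_s - X^{x'}_s, b(X^x_s) - b(X^{x'}_s)\rangle\, ds$; and the $d\langle B\rangle$-integrals produced by the $h_{ij}$ cross-variation and by $(\sigma(X^x_s)-\sigma(X^{x'}_s))^\top(\sigma(X^x_s)-\sigma(X^{x'}_s))$. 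Bounding the latter two by twice the associated $G$-functional and invoking (A2) makes the combined integrand pointwise $\le e^{2\eta s}(2\eta - 2\eta)|X^x_s - X^{x'}_s|^2 = 0$. Hence $e^{2\eta t}|X^x_t - X^{x'}_t|^2 \le |x-x'|^2 + M_t$ quasi-surely, and taking $G$-expectation yields (ii).

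For (i), I would repeat the procedure with $e^{\eta t}|X^x_t|^2$. Because (A2) is a condition on differences, I would split $\ell(X^x_s) = (\ell(X^x_s) - \ell(0)) + \ell(0)$ for $\ell \in \{b, h_{ij}, \sigma\}$ and use the sublinearity of $G$ to separate the $G$-term into a ``difference part'' $G(A_1)$ and a ``residual part'' $G(A_2)$. The difference part combined with $2\langle X^x_s, b(X^x_s)-b(0)\rangle$ is bounded by $-2\eta|X^x_s|^2$ via (A2) with $x'=0$, while the residuals involve cross terms of the form $|X^x_s||\bar\kappa|$ and $|\bar\kappa|^2$, which Young's inequality controls by $\tfrac{\eta}{2}|X^x_s|^2 + C(\kappa_1,\eta)|\bar\kappa|^2$ (the $|\sigma(X)-\sigma(0)|$ factor being absorbed through the Lipschitz bound in (A1)). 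After including the exponential drift $\eta e^{\eta s}|X^x_s|^2$, the net coefficient of $|X^x_s|^2$ is still strictly negative, so the corresponding integral can be discarded, leaving
\[
e^{\eta t}|X^x_t|^2 \;\le\; |x|^2 + M_t + \tfrac{C(\kappa_1,\eta)}{\eta}|\bar\kappa|^2 (e^{\eta t} - 1) \quad \text{q.s.}
\]
Taking $G$-expectation and dividing by $e^{\eta t}$ yields $\hat{\mathbb{E}}[|X^x_t|^2] \le e^{-\eta t}|x|^2 + C(\kappa_1,\eta)|\bar\kappa|^2$, which in turn gives the claim in (i).

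The main technical obstacle is the bookkeeping in part (i): one must split the coefficients so that the negative $-2\eta|X^x_s|^2$ delivered by (A2) strictly dominates the positive quadratic residues arising from the $\ell(0)$-terms, a balance that requires Young's inequality to be applied with suitably small parameters. Once this is arranged, the remaining estimates are standard applications of the comparison property of $G$-expectation to the quasi-sure inequality above.
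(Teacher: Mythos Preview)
Your argument is correct. Both parts follow the standard dissipativity calculation: applying $G$-It\^o's formula to an exponentially weighted square, converting the $d\langle B\rangle$-integrals into $ds$-integrals via $\int_0^t\xi_s\,d\langle B\rangle_s\le 2\int_0^t G(\xi_s)\,ds$, and then invoking (A2) so that the net quadratic term is nonpositive. The splitting in part (i) and the use of Young's inequality to absorb the $\ell(0)$-residuals are exactly what is needed; this mirrors Step~2 of the proof of Lemma~\ref{myw603} in the main text.

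Note, however, that the paper does not actually carry out this computation for Lemma~\ref{myw2}: its proof consists solely of a citation to Lemma~3.2 of \cite{HW} and Lemma~4.1 of \cite{HW1}. So there is no ``paper's own proof'' to compare against in detail. Your write-up supplies precisely the argument those references contain, and it is consistent with the techniques the paper itself deploys elsewhere (notably in Lemma~\ref{myw603}). In that sense your approach is the intended one, just made explicit rather than outsourced.
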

\begin{proof}
The proof is immediate from Lemma 3.2 of \cite{HW} or Lemma 4.1 of \cite{HW1}.
\end{proof}

Let $S_{G}^{0}(0,T)=\{h(t,B_{t_{1}\wedge t},\cdot\cdot\cdot,B_{t_{n}\wedge
t}):t_{1},\ldots,t_{n}\in\lbrack0,T],h\in C_{b.lip}(\mathbb{R}^{n\times d+1})\}$. For
$\eta\in S_{G}^{0}(0,T)$, set $\Vert\eta\Vert_{S_{G}^{2}}=\{
\mathbb{\hat{E}}[\sup_{t\in\lbrack0,T]}|\eta_{t}|^{2}]\}^{\frac{1}{2}}$.
Denote by $S_{G}^{2}(0,T)$ the completion of $S_{G}^{0}(0,T)$ under the norm
$\Vert\cdot\Vert_{S_{G}^{2}}$ (see \cite{Liu1} for more related research). For the sake of brevity, denote by $\mathfrak{S}_{G}^{2}(0,\infty)$  the collection of processes $(Y_t,Z_t,K_t)_{t\geq 0}$ such that, for each $T>0$, $(Y_t)_{t\in[0,T]}\in S_G^{2}(0,T)$, $(Z_t)_{t\in[0,T]}\in M_G^{2}(0,T;\mathbb{R}^d)$ and $(K_t)_{t\in[0,T]}\in S_G^{2}(0,T)$ is a continuous non-increasing $G$-martingale starting from origin.

\begin{lemma}\label{myw6}
Suppose that \emph{(A1)} and \emph{(A2)} hold. Then, the
$G$-EBSDE \eqref{AppHM} has a solution $(Y^x,Z^x,K^x,\lambda)\in\mathfrak{S}_{G}^{2}(0,\infty)\times\mathbb{R}$  for each $x\in\mathbb{R}^n$, where the constant $\lambda$ is independent of the argument $x$.
Moreover, there exists a  continuous function $v$  satisfying
\[
v(0)=0, \ |v(x)-v(x^{\prime})|\leq C(\eta)\kappa_2|x-x^{\prime}|, \ \forall x,x^{\prime}\in\mathbb{R}^n,
\]
such that $Y_t^x=v(X^x_t)$ for each $(t,x)\in[0,\infty)\times\mathbb{R}^n$.
\end{lemma}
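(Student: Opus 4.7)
The plan is to adapt the infinite-horizon/discount approach of Fuhrman--Hu--Tessitore to the $G$-framework, in the spirit of \cite{HW}. For each $\alpha>0$, I would first consider the infinite-horizon discounted $G$-BSDE
\begin{equation*}
Y^{x,\alpha}_t=Y^{x,\alpha}_T+\int_t^T\!\bigl(g(X^x_s)-\alpha Y^{x,\alpha}_s\bigr)ds+\sum_{i,j=1}^d\int_t^T g'_{ij}(X^x_s)d\langle B^i,B^j\rangle_s-\int_t^T Z^{x,\alpha}_sdB_s-(K^{x,\alpha}_T-K^{x,\alpha}_t),
\end{equation*}
which, by the standard finite-horizon $G$-BSDE theory together with the Lipschitz estimate of Lemma \ref{myw2}, admits a unique solution $(Y^{x,\alpha},Z^{x,\alpha},K^{x,\alpha})\in \mathfrak{S}_G^2(0,\infty)$ satisfying the Markovian representation $Y^{x,\alpha}_t=v^\alpha(X^x_t)$ for some deterministic function $v^\alpha$.

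The core step is to obtain $\alpha$-uniform estimates on $v^\alpha$. Using the linear growth $|g(x)|\leq |g(0)|+\kappa_2|x|$ and the moment bound of Lemma \ref{myw2}(i), a comparison-type argument for $G$-BSDEs yields
\begin{equation*}
\sup_{\alpha>0}\alpha|v^\alpha(0)|\leq C(\kappa_1,\kappa_2,\eta,\bar\kappa).
\end{equation*}
For the spatial Lipschitz estimate, I would apply $G$-It\^o's formula to $e^{\eta t}|X^x_t-X^{x'}_t|^2$ as in Lemma \ref{myw603}, together with a $G$-BSDE comparison on $Y^{x,\alpha}-Y^{x',\alpha}$, exploiting the dissipativity (A2) to absorb the quadratic variation terms. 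This gives
\begin{equation*}
|v^\alpha(x)-v^\alpha(x')|\leq C(\eta)\kappa_2|x-x'|\qquad\text{uniformly in }\alpha>0.
\end{equation*}

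With these estimates in hand, I would pass to the limit. Setting $\widetilde v^\alpha(x):=v^\alpha(x)-v^\alpha(0)$ and using $\widetilde v^\alpha(0)=0$ with the uniform Lipschitz bound, Arzel\`a--Ascoli produces a sequence $\alpha_n\downarrow 0$ and a Lipschitz function $v$ with $v(0)=0$ and $|v(x)-v(x')|\leq C(\eta)\kappa_2|x-x'|$ so that $\widetilde v^{\alpha_n}\to v$ locally uniformly. Simultaneously, along a further subsequence, $-\alpha_n v^{\alpha_n}(0)\to \lambda$ for some $\lambda\in\mathbb{R}$. Then I would define the candidate solution
\begin{equation*}
Y^x_t:=v(X^x_t),
\end{equation*}
and construct $Z^x$ and $K^x$ as the limits (on each bounded interval $[0,T]$) of $Z^{x,\alpha_n}$ in $M_G^2(0,T;\mathbb{R}^d)$ and $K^{x,\alpha_n}$ in $S_G^2(0,T)$, via a priori $G$-BSDE norm bounds analogous to those in \cite{HW}. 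Verifying that $(Y^x,Z^x,K^x,\lambda)$ solves \eqref{AppHM} on each $[0,T]$ then reduces to passing to the limit in the $G$-BSDE written for $(Y^{x,\alpha_n}-v^{\alpha_n}(0), Z^{x,\alpha_n},K^{x,\alpha_n})$, whose driver contains $g(X^x_s)-\alpha_n(Y^{x,\alpha_n}_s-v^{\alpha_n}(0))-\alpha_n v^{\alpha_n}(0)$; the first $\alpha_n$-term vanishes (by the Lipschitz bound and Lemma \ref{myw2}(i)) and the second converges to $-\lambda$. The identity $Y^x_t=v(X^x_t)$ is then inherited from the Markov property of $G$-SDEs, exactly as in the finite-horizon case.

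The main obstacle is the limit passage in the non-decreasing $G$-martingale term $K^{x,\alpha_n}$, since the $G$-framework lacks a genuine martingale representation and the non-symmetric part must be controlled via $S_G^2$-norm estimates on the whole $G$-BSDE quadruple. The technical key here is to get $\alpha$-uniform estimates on $\|Z^{x,\alpha}\|_{M_G^2(0,T)}$ and on the total variation of $K^{x,\alpha}$ on $[0,T]$; these follow from the standard $G$-BSDE a priori inequalities applied to $Y^{x,\alpha}-v^\alpha(0)$ once the uniform Lipschitz bound on $v^\alpha$ is known. The independence of $\lambda$ from $x$ is then automatic, since choosing a different base point $x_0$ instead of $0$ would only shift $v$ by a constant while leaving the driver $g(X^x_s)-\lambda$ unchanged.
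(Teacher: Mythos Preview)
Your proposal is correct and follows essentially the same vanishing-discount scheme as the paper: introduce the discounted infinite-horizon $G$-BSDE with parameter $\alpha$ (the paper's $\epsilon$), obtain the $\alpha$-uniform Lipschitz bound $|v^\alpha(x)-v^\alpha(x')|\le C(\eta)\kappa_2|x-x'|$ and the bound on $\alpha v^\alpha(0)$, extract a subsequence via Arzel\`a--Ascoli so that $v^{\alpha_n}-v^{\alpha_n}(0)\to v$ and $\alpha_n v^{\alpha_n}(0)\to\lambda$, and pass to the limit in the $G$-BSDE (the paper delegates this last step to \cite{HW1}). The only slip is a sign inconsistency in your limit for $\lambda$: since the discounted driver is $g(X^x_s)-\alpha Y^{x,\alpha}_s$, one needs $\alpha_n v^{\alpha_n}(0)\to\lambda$ (not $-\alpha_n v^{\alpha_n}(0)\to\lambda$) to recover the target driver $g(X^x_s)-\lambda$.
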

\begin{proof}
We shall  give the sketch of the proof for readers' convenience. Without loss of generality, assume $g^{\prime}_{ij}=0$.
For each $\epsilon>0$, consider the following $G$-BSDE with infinite horizon:
\[{Y}_{t}^{\epsilon,x}={Y}^{\epsilon,x}_T+\int_{t}^{T}\left(g(X_{s}^{x})-\epsilon Y^{\epsilon,x}_s\right)ds-\int_{t}^{T}{Z}_{s}^{\epsilon,x}dB_{s}-({K}_{T}^{\epsilon,x}-{K}_{t}^{\epsilon,x}), \ \forall 0\leq t\leq T<\infty.
\]
We define the function $v^{\epsilon}(x):={Y}_{0}^{\epsilon,x}$ for each $x\in\mathbb{R}^n$. Then from Lemma \ref{myw2}, the proof of Theorem 3.1 and Lemma 4.2 of \cite{HW1}, we have that
\[
|v^{\epsilon}(x)|\leq  C(\kappa_1,\kappa_2,\eta)\frac{1+|x|+|\bar{\kappa}|}{\epsilon}\ \text{and}\ |v^{\epsilon}(x)-v^{\epsilon}(x^{\prime})|\leq C(\eta)\kappa_2|x-x^{\prime}|.
\]
Denote $\overline{v}^{\epsilon}(x)={v}^{\epsilon}(x)-{v}^{\epsilon}(0)$.
Note that $\overline{v}^{\epsilon}(x)$ is a  uniformly Lipschitz function.
Thus, by a diagonal procedure, we can construct  a
sequence $\epsilon_m\downarrow 0$ such that $\overline{v}^{\epsilon_m}(x)\rightarrow v(x)$ for all $x\in\mathbb{R}^n$ and $\epsilon_m{v}^{\epsilon_m}(0)\rightarrow \lambda$ for some real number $\lambda$.
Finally, by a similar analysis as in Theorem 5.1 of \cite{HW1}, we can get the desired result.
\end{proof}

Then, we have the following asymptotic property, which can be seen as the ergodic theorem in the $G$-expectation framework.
\begin{lemma}\label{myw7}
Assume  conditions \emph{(A1)} and \emph{(A2)} hold. Then, for each $T\in[0,\infty)$, we have
\[
\left|\mathbb{\hat{E}}\left[{\int^{T}_0g(X^x_s)ds}+\sum_{i,j=1}^d\int^{T}_0g^{\prime}_{ij}(X^x_s)d\langle B^i,B^j\rangle_s\right]-\lambda T\right|\leq {C(\kappa_1,\eta)\kappa_2(1+|x|+|\bar{\kappa}|)}, \ \forall x\in\mathbb{R}^n.
\]
where $\bar{\kappa}$ and $\lambda$ are given by Lemma \emph{\ref{myw2}} and Lemma \emph{\ref{myw6}}, respectively. In particular,\[
\lambda=\lim\limits_{T\rightarrow\infty}\frac{1}{T}\mathbb{\hat{E}}\left[{\int^{T}_0g(X^x_s)ds}+\sum_{i,j=1}^d\int^{T}_0g^{\prime}_{ij}(X^x_s)d\langle B^i,B^j\rangle_s\right].
\]
\end{lemma}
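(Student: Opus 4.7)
The plan is to deduce the estimate directly from the ergodic $G$-BSDE representation supplied by Lemma \ref{myw6}. Writing the $G$-EBSDE \eqref{AppHM} at $t=0$ with terminal time $T$ and using the nonlinear Feynman--Kac identification $Y^x_t = v(X^x_t)$, the dynamic programming property for $G$-BSDEs (with generator containing a $d\langle B^i,B^j\rangle$-term) yields
\[
v(x)= \hat{\mathbb{E}}\!\left[v(X^x_T)+\int_0^T\!\bigl(g(X^x_s)-\lambda\bigr)ds+\sum_{i,j=1}^{d}\int_0^T g'_{ij}(X^x_s)\,d\langle B^i,B^j\rangle_s\right].
\]
This is the starting point; the martingale increment $\int_0^TZ^x_sdB_s$ and the non-increasing $G$-martingale $K^x_T$ are absorbed into the conditional $G$-expectation by construction of $G$-BSDEs, so no separate control of those terms is needed.

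Next, I would rearrange the identity as
\[
v(x)+\lambda T=\hat{\mathbb{E}}\!\left[v(X^x_T)+I_T\right],\qquad I_T:=\int_0^T g(X^x_s)ds+\sum_{i,j=1}^d\int_0^T g'_{ij}(X^x_s)\,d\langle B^i,B^j\rangle_s,
\]
and apply the elementary subadditivity bound $|\hat{\mathbb{E}}[\xi+\eta]-\hat{\mathbb{E}}[\xi]|\le \hat{\mathbb{E}}[|\eta|]$ with $\xi=I_T$ and $\eta=v(X^x_T)$. This gives
\[
\bigl|\hat{\mathbb{E}}[I_T]-\lambda T\bigr|\le |v(x)|+\hat{\mathbb{E}}\bigl[|v(X^x_T)|\bigr].
\]

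The remaining task is to estimate $v$. Since Lemma \ref{myw6} provides $v(0)=0$ and $|v(x)-v(x')|\le C(\eta)\kappa_2|x-x'|$, I get $|v(x)|\le C(\eta)\kappa_2|x|$ and $|v(X^x_T)|\le C(\eta)\kappa_2|X^x_T|$. Then Lemma \ref{myw2}(i) together with Jensen-type inequality $\hat{\mathbb{E}}[|X^x_T|]\le (\hat{\mathbb{E}}[|X^x_T|^2])^{1/2}$ supplies the uniform-in-$T$ bound $\hat{\mathbb{E}}[|X^x_T|]\le C(\kappa_1,\eta)(1+|x|+|\bar\kappa|)$. Combining everything gives exactly $C(\kappa_1,\eta)\kappa_2(1+|x|+|\bar\kappa|)$. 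The final limit assertion then follows by dividing by $T$ and sending $T\to\infty$, since the right-hand side is independent of $T$.

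The main obstacle is technical rather than conceptual: ensuring that the $G$-BSDE dynamic programming identity is applied correctly when the driver contains a $d\langle B\rangle$-component and when the terminal time is arbitrary (the solution of the $G$-EBSDE is only defined on $[0,\infty)$, not with a specific terminal condition). I would handle this by invoking the Markov structure $Y^x_t=v(X^x_t)$ from Lemma \ref{myw6} to justify using $v(X^x_T)$ as the terminal value on $[0,T]$, after which the identity becomes an ordinary finite-horizon $G$-BSDE whose standard representation formula applies.
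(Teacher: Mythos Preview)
Your proposal is correct and follows essentially the same route as the paper: both start from the $G$-EBSDE representation $Y^x_0=\hat{\mathbb{E}}\bigl[Y^x_T+\int_0^T(g(X^x_s)-\lambda)ds+\sum_{i,j}\int_0^T g'_{ij}(X^x_s)d\langle B^i,B^j\rangle_s\bigr]$, use subadditivity of $\hat{\mathbb{E}}$ to isolate $\hat{\mathbb{E}}[I_T]-\lambda T$, and then bound $|Y^x_0|$ and $\hat{\mathbb{E}}[|Y^x_T|]$ via the Lipschitz estimate on $v$ from Lemma~\ref{myw6} together with the moment bound of Lemma~\ref{myw2}(i). The only cosmetic difference is that you phrase things through $v$ rather than $Y^x$, which is the same object by the identification $Y^x_t=v(X^x_t)$.
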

\begin{proof}
From equation \eqref{AppHM},   we get that
\[
{Y}_{0}^{x}=\mathbb{\hat{E}}\left[{Y}^x_{T}+\int_{0}^{T}g(X_{s}^{x})ds+\sum_{i,j=1}^d\int^{T}_0g^{\prime}_{ij}(X^x_s)d\langle B^i,B^j\rangle_s-\lambda T\right],
\]
which implies that
 \begin{align*}
 &\left|\mathbb{\hat{E}}\left[\int_{0}^{T}g(X_{s}^{x})ds+\sum_{i,j=1}^d\int^{T}_0g^{\prime}_{ij}(X^x_s)d\langle B^i,B^j\rangle_s\right]-\lambda T\right|\\& \leq   \left|\mathbb{\hat{E}}\left[{Y}^x_{T}+\int_{0}^{T}g(X_{s}^{x})ds+\sum_{i,j=1}^d\int^{T}_0g^{\prime}_{ij}(X^x_s)d\langle B^i,B^j\rangle_s\right]-\lambda T\right|+{\mathbb{\hat{E}}[|Y^x_{T}|]}\leq  {|Y^x_0|+\mathbb{\hat{E}}[|Y^x_{T}|]}.
\end{align*}
Recalling Lemma \ref{myw6}, there exists a constant $C(\eta)$ such that
\[
|Y^x_s|\leq C(\eta)\kappa_2|X^x_s|, \ \forall s\geq 0,
\]
which together with Assertion (i) of Lemma  \ref{myw2},  indicates that
\[
\mathbb{\hat{E}}\left[|{Y}^x_{T}|\right]\leq  C(\eta)\kappa_2\mathbb{\hat{E}}[|X^x_{T}|]\leq  C(\kappa_1,\eta)\kappa_2(1+|x|+|\bar{\kappa}|).
\]
It follows that
\[
 \left|\mathbb{\hat{E}}\left[{\int^{T}_0g(X^x_s)ds}+\sum_{i,j=1}^d\int^{T}_0g^{\prime}_{ij}(X^x_s)d\langle B^i,B^j\rangle_s\right]-\lambda T\right|\leq C(\kappa_1,\eta)\kappa_2(1+|x|+|\bar{\kappa}|),
\]
which completes the proof.
\end{proof}

\appendix
\renewcommand\thesection{\normalsize Appendix B: Comparison theorem for the averaged PDE}
\section{ }
\renewcommand\thesection{B}
In this appendix, we shall state the comparison theorem for PDE \eqref{PDE511}.

\begin{lemma} \label{myw507} Let $\widetilde{v}^1$ be a viscosity subsolution  and $\widetilde{v}^2$ be a viscosity supersolution to PDE \eqref{PDE511} satisfying the polynomial growth condition, respectively. Then
$\widetilde{v}^1\leq \widetilde{v}^2$ on $[0, T]\times\mathbb{R}^{n}$  provided that $\widetilde{v}^1|_{t=0}\leq \widetilde{v}^2|_{t=0}$.
\end{lemma}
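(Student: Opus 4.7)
The plan is to adapt the standard doubling-of-variables comparison argument of Crandall--Ishii--Lions to our averaged parabolic PDE on the unbounded domain $[0,T]\times\mathbb{R}^n$, using only the sublinearity, monotonicity, and Lipschitz-type estimate for $\widetilde{G}$ collected in Lemma \ref{myw502}. The overall structure is: (a) penalize $\widetilde{v}^1$ so that it decays at the parabolic boundary; (b) double variables and apply Ishii's lemma; (c) close using properties (i)--(iv) of $\widetilde{G}$.

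First I would reduce to a strict-inequality setting. Fix an integer $k$ strictly larger than the polynomial growth exponent of $\widetilde{v}^1,\widetilde{v}^2$ and set $\phi(\tilde{x})=(1+|\tilde{x}|^2)^{k}$. Property (iv) gives $|\widetilde{G}(\tilde{x},D\phi,D^2\phi)|\leq C(1+|\tilde{x}|^{2k})$, so for a constant $A$ sufficiently large the auxiliary function $\Psi(t,\tilde{x})=\lambda e^{At}(1+\phi(\tilde{x}))+\mu/(T-t)$ is a strict classical supersolution of \eqref{PDE511}. By the sublinearity of $\widetilde{G}$ (properties (i) and (ii)), $\widetilde{v}^{1,\lambda,\mu}:=\widetilde{v}^1-\Psi$ remains a viscosity subsolution, and $\widetilde{v}^{1,\lambda,\mu}-\widetilde{v}^2\to-\infty$ as $|\tilde{x}|\to\infty$ or $t\uparrow T$. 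Assume for contradiction $\sup(\widetilde{v}^1-\widetilde{v}^2)>0$; then for $\lambda,\mu$ small, $M:=\sup_{[0,T)\times\mathbb{R}^n}(\widetilde{v}^{1,\lambda,\mu}-\widetilde{v}^2)$ is still positive.

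Next, for $\alpha>0$ I would maximize
$\Phi_\alpha(t,\tilde{x},\tilde{y})=\widetilde{v}^{1,\lambda,\mu}(t,\tilde{x})-\widetilde{v}^2(t,\tilde{y})-|\tilde{x}-\tilde{y}|^2/(2\alpha)$
over $[0,T)\times\mathbb{R}^n\times\mathbb{R}^n$; the coercivity from $\Psi$ gives a maximizer $(t_\alpha,\tilde{x}_\alpha,\tilde{y}_\alpha)$ in a compact set uniform in $\alpha$, with $|\tilde{x}_\alpha-\tilde{y}_\alpha|^2/\alpha\to 0$ and $\Phi_\alpha\to M$. Ishii's lemma then produces $(a,p_\alpha,X_\alpha)\in\overline{\mathcal{P}}^{2,+}\widetilde{v}^{1,\lambda,\mu}(t_\alpha,\tilde{x}_\alpha)$ and $(a,p_\alpha,Y_\alpha)\in\overline{\mathcal{P}}^{2,-}\widetilde{v}^2(t_\alpha,\tilde{y}_\alpha)$ with $p_\alpha=(\tilde{x}_\alpha-\tilde{y}_\alpha)/\alpha$ and the matrix inequality $\begin{pmatrix}X_\alpha&0\\0&-Y_\alpha\end{pmatrix}\leq \tfrac{3}{\alpha}\begin{pmatrix}I&-I\\-I&I\end{pmatrix}$, which implies in particular $X_\alpha\leq Y_\alpha$. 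Combining the sub/supersolution inequalities for $\widetilde{v}^{1,\lambda,\mu}$ and $\widetilde{v}^2$ yields a strictly positive lower bound (from $\mu/(T-t_\alpha)^2$ and the strict-supersolution excess of $\Psi$) controlled from above by $\widetilde{G}(\tilde{x}_\alpha,p_\alpha,X_\alpha)-\widetilde{G}(\tilde{y}_\alpha,p_\alpha,Y_\alpha)$. Using monotonicity (iii) with $X_\alpha\leq Y_\alpha$ reduces this to $\widetilde{G}(\tilde{x}_\alpha,p_\alpha,Y_\alpha)-\widetilde{G}(\tilde{y}_\alpha,p_\alpha,Y_\alpha)$, and property (iv) bounds the latter by $C(1+|\tilde{x}_\alpha|^2+|\tilde{y}_\alpha|^2)(|p_\alpha|+|Y_\alpha|)|\tilde{x}_\alpha-\tilde{y}_\alpha|$.

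The main obstacle is the last step: naively $|Y_\alpha|=O(1/\alpha)$ while only $|\tilde{x}_\alpha-\tilde{y}_\alpha|=o(\sqrt{\alpha})$, so the product does not obviously vanish as $\alpha\to 0$. The resolution is the standard one: exploit the sharper CIL matrix inequality along the direction $\tilde{x}_\alpha-\tilde{y}_\alpha$ (where the relevant norm of $Y_\alpha$ is actually $O(1)$ rather than $O(1/\alpha)$), or equivalently introduce a second small penalty $\beta(\phi(\tilde{x})+\phi(\tilde{y}))$, apply sublinearity (i) to split $\widetilde{G}$ into a principal piece (vanishing with $\alpha$) and a $\beta$-piece (controlled by the strict supersolution margin from $\Psi$), and send first $\alpha\to 0$ with $\mu,\lambda,\beta$ fixed, then $\beta\to 0$. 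This produces the required contradiction with the strictly positive lower bound, and after sending $\lambda,\mu\to 0$ we conclude $\widetilde{v}^1\leq\widetilde{v}^2$ on $[0,T]\times\mathbb{R}^n$.
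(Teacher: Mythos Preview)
Your overall architecture is correct, and you correctly identify the pinch point: after using monotonicity to pass from $X_\alpha$ to $Y_\alpha$, property~(iv) of Lemma~\ref{myw502} yields a bound of order $(|p_\alpha|+|Y_\alpha|)\,|\tilde{x}_\alpha-\tilde{y}_\alpha|$, and since generically $|Y_\alpha|=O(\alpha^{-1})$ while only $|\tilde{x}_\alpha-\tilde{y}_\alpha|^2/\alpha\to 0$, the product is $o(\alpha^{-1/2})$ and need not vanish. The difficulty is that your two proposed remedies do not actually close this gap. The ``directional'' remark is not a usable estimate: the Ishii matrix inequality does not force $\|Y_\alpha\|$ (in any norm relevant to property~(iv)) to be $O(1)$, only the combination $\xi^\top X_\alpha\xi-\eta^\top Y_\alpha\eta\le \tfrac{3}{\alpha}|\xi-\eta|^2$. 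And an extra $\beta(\phi(\tilde{x})+\phi(\tilde{y}))$ penalty only perturbs the jets by bounded terms; it does nothing to the $|Y_\alpha|\,|\tilde{x}_\alpha-\tilde{y}_\alpha|$ product.

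The missing ingredient is precisely the Crandall--Ishii--Lions structure condition (condition~(3.14) in \cite{CMI}), and it \emph{cannot} be deduced from (i)--(iv) of Lemma~\ref{myw502} alone: a purely Lipschitz dependence of $\widetilde G$ on $A$ is too weak. What is needed is to go back to the \emph{definition} of $\widetilde G$ as a time-averaged $G$-expectation and use the matrix inequality inside the quadratic form. Concretely, since the second-order part enters through $\widetilde\sigma^\top A\,\widetilde\sigma$, the Ishii inequality gives
\[
\widetilde\sigma(\tilde{x},\cdot)^\top X_\alpha\,\widetilde\sigma(\tilde{x},\cdot)-\widetilde\sigma(\tilde{y},\cdot)^\top Y_\alpha\,\widetilde\sigma(\tilde{y},\cdot)\ \le\ \tfrac{3}{\alpha}\,\bigl|\widetilde\sigma(\tilde{x},\cdot)-\widetilde\sigma(\tilde{y},\cdot)\bigr|^2,
\]
and after averaging (and using \eqref{myw503}) one obtains an estimate of the form
\[
\widetilde G(\tilde{x},p_\alpha,X_\alpha)-\widetilde G(\tilde{y},p_\alpha,Y_\alpha)\ \le\ C\,(1+|\tilde{x}|^2+|\tilde{y}|^2)\bigl(\alpha^{-1}|\tilde{x}-\tilde{y}|^2+|\tilde{x}-\tilde{y}|\bigr),
\]
whose right-hand side does tend to zero. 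This is exactly the route the paper takes (see inequality~\eqref{myw505}); after a multiplicative change of unknown $\widetilde v\mapsto \widetilde v\,\xi^{-1}e^{-\lambda t}$ to make the functions bounded, the paper verifies this structure condition directly from the probabilistic representation of $\widetilde G$ and then runs the standard doubling argument. Your proof would be complete once you replace the appeal to (iv) at the critical step by this structural estimate.
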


\begin{proof}
The main idea is from Theorem 8.6 in \cite{KKPPQ} and Theorem 2.2 in Appendix C of \cite{P10}. For reader's convenience, we shall give the sketch of the proof.

 For some constant $\lambda>0 $  to be chosen below, we set $\xi(\tilde{x}):=(1+|\tilde{x}|^{2})^{l/2}$ and
\begin{equation*}
\widetilde{v}_1(t,\tilde{x}):=\widetilde{v}^1(t,\tilde{x})\xi^{-1} (\tilde{x})e^{-\lambda t}, \ \widetilde{v}_2(t,\tilde{x}):=-\widetilde{v}^2(t,\tilde{x})\xi^{-1} (\tilde{x})e^{-\lambda t},
\end{equation*}%
where $l\geq 2$ is chosen to be large enough such that $|\widetilde{v}%
_{i}|\rightarrow 0$ uniformly  as $x\rightarrow\infty$. It
is easy to check that, $\widetilde{v}_{i}$
is a bounded viscosity subsolution   of
\begin{equation*}
\partial _{t}\widetilde{v}_{i}+\lambda\widetilde{v}_i-\widetilde{G}^*_i(\tilde{x},\widetilde{v}%
_{i},D\widetilde{v}_i,D^{2}\widetilde{v}_{i})=0,
\end{equation*}%
where the function
$\widetilde{G}^*_1(\tilde{x},v,p,X)=\widetilde{G}^*(\tilde{x},v,p,X),\widetilde{G}^*_2(\tilde{x},v,p,X)=-\widetilde{G}^*(\tilde{x},-v,-p,-X)$ and
\begin{align}\label{myw510}\begin{split}
\widetilde{G}^*(\tilde{x},v,p,X):&=e^{-\lambda t}\xi ^{-1}\widetilde{G}(\tilde{x},e^{\lambda
t}(p\xi(\tilde{x})+vD\xi (\tilde{x}) ),e^{\lambda t}(X\xi(\tilde{x}) +p\otimes D\xi (\tilde{x}) +D\xi (\tilde{x}) \otimes
p+vD^2\xi (\tilde{x}) ))\\
&=\widetilde{G}(\tilde{x},p+v\eta(\tilde{x}),X +p\otimes \eta(\tilde{x}) +\eta(\tilde{x}) \otimes
p+v\kappa(\tilde{x}))
\end{split}
\end{align}
for any $(\tilde{x},v,p,X)\in\mathbb{R}^n\times\mathbb{R}\times\mathbb{R}^n\times\mathbb{S}(n)$.
Here $p\otimes \eta(\tilde{x})=[p^i\eta^j(\tilde{x})]_{i,j}$ and
\begin{align*}
\eta(\tilde{x}):=\xi^{-1}(\tilde{x})D\xi (\tilde{x})& =l (1+|\tilde{x}|^{2})^{-1}\tilde{x},\ \ \ \  \\
\kappa(\tilde{x}):=\xi^{-1}(\tilde{x})D^{2}\xi (\tilde{x})& =l(1+|\tilde{x}|^{2})^{-1}I_{n}+l(l-2)(1+|\tilde{x}|^{2})^{-2}\tilde{x}\otimes \tilde{x}.
\end{align*}%
Note that $l\geq 2$, $\eta$ and $\kappa$ are uniformly bounded functions. Then, using Assertion (iv) of Lemma \ref{myw502}, we could choose $\lambda$ large enough, so that the function \begin{align}\label{myw504}
v\rightarrow-\lambda v+\widetilde{G}^*(\tilde{x},v,p,X)\ \text{is non-increasing   for any  $(\tilde{x},p,X)\in\mathbb{R}^n\times\mathbb{R}^n\times\mathbb{S}(n)$.}\end{align}

Next, we shall verify that $\widetilde{G}^*$ satisfies the  regularity  condition (3.14)  in \cite{CMI} for the comparison principle.
Suppose that  $A, B\in\mathbb{S}(n)$ satisfies
\begin{equation*}
 \left(
\begin{array}
[c]{cccc}%
A  & 0\\
0  & B
\end{array}
\right)  \leq 3\alpha\left(
\begin{array}
[c]{cccc}%
I_n & -I_n\\
-I_n  & I_n
\end{array}\right),
\end{equation*}
for some $\alpha>0$.
Note that $G(A)\leq\frac{1}{2}\overline{\sigma}^2\mathrm{tr}[A]$ for any $A\geq 0$. Then, we get that
\begin{align*}
&\mathbb{\hat{E}}\left[\sum_{i,j=1}^d\int^T_0 \widetilde{\sigma}^A_{ij}(\tilde{x},\overline{X}^{(\tilde{x},\bar{x})}_s)d\langle B^i, B^j\rangle_s-\sum_{i,j=1}^d\int^T_0 \widetilde{\sigma}^{-B}_{ij}(\tilde{y},\overline{X}^{(\tilde{y},\bar{x})}_s)d\langle B^i, B^j\rangle_s\right]
\\
&\leq2 \mathbb{\hat{E}}\left[\int^T_0 G\left(\widetilde{\sigma}^{\top}(\tilde{x},\overline{X}^{(\tilde{x},\bar{x})}_s) A \widetilde{\sigma}(\tilde{x},\overline{X}^{(\tilde{x},\bar{x})}_s)+\widetilde{\sigma}^{\top}(\tilde{y},\overline{X}^{(\tilde{y},\bar{x})}_s) B \widetilde{\sigma}(\tilde{y},\overline{X}^{(\tilde{y},\bar{x})}_s)\right )ds \right]\\
&\leq 3\overline{\sigma}^2\alpha\mathbb{\hat{E}}\left[\int^T_0\left|\widetilde{\sigma}(\tilde{x},\overline{X}^{(\tilde{x},\bar{x})}_s)-\widetilde{\sigma}(\tilde{y},\overline{X}^{(\tilde{y},\bar{x})}_s)\right|^2 ds \right] \leq C(L_1,\eta)\alpha |\tilde{x}-\tilde{y}|^2 T,
\end{align*}
where we have used assumption (H1) and estimate \eqref{myw503} in the last inequality.
Thus, from the definitions of $\widetilde{G},\widetilde{G}^*$ and by a similar analysis as the proof of Assertion (iv) of  Lemma \ref{myw502} , we conclude that
\begin{align}\label{myw505}
\widetilde{G}^*(\tilde{x},v,\alpha(\tilde{x}-\tilde{y}),A)-\widetilde{G}^*(\tilde{y},v,\alpha(\tilde{x}-\tilde{y}),-B)\leq C(L_1,L_2,\eta)(1+|\tilde{x}|^2+|\tilde{y}|^2)(|v||\tilde{x}-\tilde{y}|+\alpha |\tilde{x}-\tilde{y}|^2).
\end{align}

Finally, we will prove $\widetilde{v}_{1}+\widetilde{v}_{2}\leq 0$. By the proof of Theorem 2.2 in \cite{P10},
it suffices to prove the result under the  additional assumptions: for each $\bar{\delta}>0$,
\begin{equation}%
\partial_{t}\widetilde{v}_{i}+\lambda\widetilde{v}_{i} -\widetilde{G}^*_{i}(\tilde{x},\widetilde{v}_{i},D\widetilde{v}_{i},D^{2}\widetilde{v}_{i})\leq-\bar
{\delta}/T^{2},\ \text{and }\lim_{t\rightarrow
T}\widetilde{v}_{i}(t,\tilde{x})=-\infty \  \text{uniformly on }
\mathbb{R}^{n}.
\label{ineq-c}%
\end{equation}
Assume the contrary that
\[
\sup_{(t,x)\in \lbrack0,T)\times \mathbb{R}^{n}}(\widetilde{v}_{1}(t,x)+\widetilde{v}_{2}(t,x))>0.
\]
Note that $(\widetilde{v}_1(t,\tilde{x}))^++(\widetilde{v}_2(t,\tilde{x}))^+\rightarrow 0$ uniformly as $\tilde{x}\rightarrow\infty$.
Thus, taking $\beta_1=\beta_2=1$ and following the proof of  Theorem 2.2 in \cite{P10} line by line,
 for large enough $\alpha>0$, we could find some point $(t^{\alpha},\tilde{x}_{1}^{\alpha},\tilde{x}_{2}^{\alpha})$ inside a compact subset of $[0,T)\times\mathbb{R}^{2n}$, so that $\widetilde{v}_{1}(t^{\alpha},\tilde{x}_{1}^{\alpha})+\widetilde{v}_{2}(t^{\alpha},\tilde{x}_{2}^{\alpha})-\frac{\alpha}{2}|\tilde{x}_{2}^{\alpha}-\tilde{x}_{1}^{\alpha}|^2>0$ and
\[
\  \lim_{\alpha\rightarrow\infty}\alpha|\tilde{x}_{1}^{\alpha}-\tilde{x}_{2}^{\alpha}|^2=0,
 \ \text{and}\ \lim_{\alpha\rightarrow\infty}(t^\alpha,\tilde{x}_{1}^{\alpha},\tilde{x}_{2}^{\alpha})=({t}^*,\tilde{x}^*,\tilde{x}^*)\ \text{for some $(t^*,\tilde{x}^*)\in(0,T)\times\mathbb{R}^n$}.
\]
Then, there exist
$b_{i}^{\alpha}\in \mathbb{R}$, $X_{i}^{\alpha}\in \mathbb{S}(n)$ such that $b_1^{\alpha}+b^{\alpha}_2=0$,
\begin{equation*}
(b_{1}^{\alpha},\alpha(\tilde{x}_{1}^{\alpha}-\tilde{x}_{2}^{\alpha}),X_{1}^{\alpha}%
)\in \mathcal{\bar{P}}^{2,+}\widetilde{v}_{1}(t^{\alpha},\tilde{x}_{1}^{\alpha}), \ \ (b_{2}^{\alpha},\alpha(\tilde{x}_{2}^{\alpha}-\tilde{x}_{1}^{\alpha}),X_{2}^{\alpha}%
)\in \mathcal{\bar{P}}^{2,+}\widetilde{v}_{2}(t^{\alpha},\tilde{x}_{2}^{\alpha}),
\end{equation*}
and
\begin{equation*}
 \left(
\begin{array}
[c]{cccc}%
X_1^{\alpha}  & 0\\
0  & X_2^{\alpha}
\end{array}
\right)  \leq 3\alpha\left(
\begin{array}
[c]{cccc}%
I_n  & -I_n\\
-I_n  & I_n
\end{array}\right).
\end{equation*}
Moreover, it follows from equation \eqref{ineq-c} that
\begin{align*}
&b_{1}^{\alpha}+\lambda\widetilde{v}_{1}(t^\alpha,\tilde{x}_1^{\alpha}) -\widetilde{G}^*_{1}(\tilde{x}_1^{\alpha},\widetilde{v}_{1}(t^\alpha,\tilde{x}_1^{\alpha}),\alpha(\tilde{x}_{1}^{\alpha}-\tilde{x}_{2}^{\alpha}),X_1^{\alpha})\leq-\bar
{\delta}/T^{2},\\
&b_{2}^{\alpha}+\lambda\widetilde{v}_{2}(t^\alpha,\tilde{x}_2^{\alpha}) -\widetilde{G}^*_{2}(\tilde{x}_2^{\alpha},\widetilde{v}_{2}(t^\alpha,\tilde{x}_2^{\alpha}),\alpha(\tilde{x}_{2}^{\alpha}-\tilde{x}_{1}^{\alpha}),X_2^{\alpha})\leq-\bar
{\delta}/T^{2}.
\end{align*}

According to the definition of $\widetilde{G}^*_i$ and with the help of conditions \eqref{myw504} and \eqref{myw505}, we derive that
\begin{align*}
-2\bar
{\delta}/T^{2}&\geq\lambda\widetilde{v}_{2}(t^{\alpha},\tilde{x}_{2}^{\alpha})+\widetilde{G}^*(\tilde{x}_{2}^{\alpha},-\widetilde{v}_{2}(t^{\alpha},\tilde{x}_{2}^{\alpha}),\alpha(\tilde{x}_{1}^{\alpha}-\tilde{x}_{2}^{\alpha}),-X_{2}^{\alpha})\\
&\ \ \ \ \ +\lambda\widetilde{v}_{1}(t^{\alpha},\tilde{x}_{1}^{\alpha})-\widetilde{G}^*(\tilde{x}_{1}^{\alpha},\widetilde{v}_{1}(t^{\alpha},\tilde{x}_{1}^{\alpha}),\alpha(\tilde{x}_{1}^{\alpha}-\tilde{x}_{2}^{\alpha}),X_{1}^{\alpha})\\
&  \geq
\widetilde{G}^*(\tilde{x}_{2}^{\alpha},\widetilde{v}_{1}(t^{\alpha},\tilde{x}_{1}^{\alpha}),\alpha(\tilde{x}_{1}^{\alpha}-\tilde{x}_{2}^{\alpha}),-X_{2}^{\alpha})
-\widetilde{G}^*(\tilde{x}_{1}^{\alpha},\widetilde{v}_{1}(t^{\alpha},\tilde{x}_{1}^{\alpha}),\alpha(\tilde{x}_{1}^{\alpha}-\tilde{x}_{2}^{\alpha}),X_{1}^{\alpha})\\
&
\geq-C(L_1,L_2,\eta)(1+|\tilde{x}_{1}^{\alpha}|^2+|\tilde{x}_{2}^{\alpha}|^2)(|\widetilde{v}_{1}(t^{\alpha},\tilde{x}_{1}^{\alpha})||\tilde{x}_{1}^{\alpha}-\tilde{x}_{2}^{\alpha}|+\alpha |\tilde{x}_{1}^{\alpha}-\tilde{x}_{2}^{\alpha}|^2).
\end{align*}
The right-hand side tends to zero as $\alpha\rightarrow \infty$, which induces a contradiction. Consequently, we get that $\widetilde{v}^1\leq \widetilde{v}^2,$ the proof is complete.
\end{proof}

\end{document}